\documentclass[a4paper,12pt]{amsart}
\title[Stability conditions and curve counting invariants]{{\bf Stability conditions and curve counting invariants
on Calabi-Yau 3-folds}}
\date{}
\author{Yukinobu Toda}

\usepackage{amscd}
\usepackage{amsmath}
\usepackage{amssymb}
\usepackage{amsthm}
\usepackage{float}
\usepackage[dvips]{graphicx}

\usepackage[all,ps,dvips]{xy}

\usepackage{array}
\usepackage{amscd}
\usepackage[all]{xy}

\DeclareFontFamily{U}{rsfs}{%
\skewchar\font127}
\DeclareFontShape{U}{rsfs}{m}{n}{%
<-6>rsfs5<6-8.5>rsfs7<8.5->rsfs10}{}
\DeclareSymbolFont{rsfs}{U}{rsfs}{m}{n}
\DeclareSymbolFontAlphabet
{\mathrsfs}{rsfs}
\DeclareRobustCommand*\rsfs{%
\@fontswitch\relax\mathrsfs}

\theoremstyle{plain}
\newtheorem{thm}{Theorem}[section]
\newtheorem{prop}[thm]{Proposition}
\newtheorem{lem}[thm]{Lemma}

\newtheorem{defi}[thm]{Definition}
\newtheorem{rmk}[thm]{Remark}
\newtheorem{cor}[thm]{Corollary}

\newtheorem{prop-defi}[thm]{Proposition-Definition}
\newtheorem{thm-defi}[thm]{Theorem-Definition}
\newtheorem{lem-defi}[thm]{Lemma-Definition}

\newtheorem{conj}[thm]{Conjecture}
\newtheorem{exam}[thm]{Example}

\newdimen\argwidth
\def\db[#1\db]{
 \setbox0=\hbox{$#1$}\argwidth=\wd0
 \setbox0=\hbox{$\left[\box0\right]$}
  \advance\argwidth by -\wd0
 \left[\kern.3\argwidth\box0 \kern.3\argwidth\right]}

\newcommand{\aA}{\mathcal{A}}

\newcommand{\cC}{\mathcal{C}}
\newcommand{\dD}{\mathcal{D}}
\newcommand{\eE}{\mathcal{E}}

\newcommand{\hH}{\mathcal{H}}

\newcommand{\lL}{\mathcal{L}}
\newcommand{\mM}{\mathcal{M}}

\newcommand{\oO}{\mathcal{O}}

\newcommand{\sS}{\mathcal{S}}

\newcommand{\uU}{\mathcal{U}}
\newcommand{\vV}{\mathcal{V}}

\newcommand{\yY}{\mathcal{Y}}

\newcommand{\Supp}{\mathop{\rm Supp}\nolimits}
\newcommand{\Hom}{\mathop{\rm Hom}\nolimits}

\newcommand{\dR}{\mathbf{R}}
\newcommand{\dL}{\mathbf{L}}

\newcommand{\Hilb}{\mathop{\rm Hilb}\nolimits}

\newcommand{\ch}{\mathop{\rm ch}\nolimits}

\newcommand{\Ext}{\mathop{\rm Ext}\nolimits}
\newcommand{\Spec}{\mathop{\rm Spec}\nolimits}
\newcommand{\rank}{\mathop{\rm rank}\nolimits}
\newcommand{\Coh}{\mathop{\rm Coh}\nolimits}

\newcommand{\cneq}{\mathrel{\raise.095ex\hbox{:}\mkern-4.2mu=}}
\newcommand{\eqcn}{\mathrel{=\mkern-4.5mu\raise.095ex\hbox{:}}}

\newcommand{\Cok}{\mathop{\rm Cok}\nolimits}

\newcommand{\Aut}{\mathop{\rm Aut}\nolimits}

\newcommand{\Stab}{\mathop{\rm Stab}\nolimits}

\newcommand{\DT}{\mathop{\rm DT}\nolimits}
\newcommand{\PT}{\mathop{\rm PT}\nolimits}

\newcommand{\Imm}{\mathop{\rm Im}\nolimits}

\newcommand{\Ker}{\mathop{\rm ker}\nolimits}

\newcommand{\GL}{\mathop{\rm GL}\nolimits}

\newcommand{\tr}{\mathop{\rm tr}\nolimits}
\newcommand{\ex}{\mathop{\rm ex}\nolimits}

\newcommand{\cl}{\mathop{\rm cl}\nolimits}

\subjclass[2000]{Primary~14N35, Secondary~18E30}

\begin{document}
\maketitle

\begin{abstract}
The purpose of this paper
is twofold:
first we give a survey on 
the recent developments of 
curve counting invariants 
on Calabi-Yau 3-folds, e.g.
Gromov-Witten theory, Donaldson-Thomas
theory and Pandharipande-Thomas theory.  
Next we focus on the proof of the rationality 
conjecture of 
the generating series of
PT
invariants, and discuss its conjectural
Gopakumar-Vafa form.  
\end{abstract}
\section{Introduction}
\subsection{Background}
Let $X$ be a smooth projective
Calabi-Yau 3-fold, i.e. 
\begin{align*}
\bigwedge^3 T_{X}^{\vee} \cong \oO_X, \ 
H^1(X, \oO_X)=0. 
\end{align*}
We are interested in the curve counting 
theory on $X$. This is an 
important field of study in
connection with mirror symmetry: 
it predicts
a relationship between curve counting 
invariants on $X$ and a period integral 
on its mirror manifold $\check{X}$. 
So far curve counting invariants have 
been computed and 
compared under the mirror symmetry
in several situations. 

Now there are three kinds of
 curve counting theories on $X$:
\begin{itemize}
\item {\bf Gromov-Witten (GW) theory: }
counting pairs, 
\begin{align*}
(C, f), \ f \colon C \to X, 
\end{align*}
where $C$ is a connected nodal curve and $f$ 
is a morphism with finite automorphisms. 
In terms of string theory, 
GW invariants count \textit{world sheets}.
The moduli space defining the GW theory 
is Kontsevich's stable map moduli space. 
The resulting invariants are $\mathbb{Q}$-valued. 
\item {\bf Donaldson-Thomas (DT) theory: }
counting subschemes, 
\begin{align*}
Z \subset X, 
\end{align*} 
with $\dim Z \le 1$. 
In terms of string theory, DT
invariants count \textit{D-branes}. 
The moduli space defining the
DT theory is the classical Hilbert scheme.
The resulting invariants are $\mathbb{Z}$-valued. 
\item {\bf Pandharipande-Thomas (PT) theory: }
counting pairs, 
\begin{align*}
(F, s), \quad s \colon \oO_X \to F, 
\end{align*}
where $F$ is a pure one dimensional sheaf, and $s$
is surjective in dimension one. 
The PT invariants also count D-branes, but 
the stability condition is different 
from DT theory. 
The moduli space defining the PT theory is 
identified with the moduli space of two term complexes, 
\begin{align*}
I^{\bullet}=(\oO_X \stackrel{s}{\to} F) \in D^b \Coh(X). 
\end{align*}
Here $D^b \Coh(X)$ is the bounded derived category of 
coherent sheaves on $X$.
\end{itemize}
An equivalence between 
GW and DT theories is conjectured by 
Maulik-Nekrasov-Okounkov-Pandharipande~\cite{MNOP}.
Also an equivalence between 
DT and PT theories 
is conjectured by Pandharipande-Thomas~\cite{PT}.
They are formulated in terms of generating functions. 

On the other hand, the notion of stability conditions 
on $D^b \Coh(X)$ is introduced by Bridgeland~\cite{Brs1}. 
He shows that the set of stability conditions on 
$D^b \Coh(X)$, denoted by 
\begin{align*}
\Stab(X),
\end{align*}
has a structure of
 a complex manifold. The space $\Stab(X)$
is expected to be related to the stringy 
K$\ddot{\rm{a}}$hler moduli space, which 
should be isomorphic to the moduli space of 
complex structures of the mirror $\check{X}$. 
An important observation by Pandharipande-Thomas~\cite{PT} 
is that the DT/PT correspondence should be 
interpreted as
wall-crossing phenomena 
in the
space of stability conditions
$\Stab(X)$. 
Although it is still difficult to study 
$\Stab(X)$ when $X$ is a projective Calabi-Yau 3-fold, 
kinds of `limiting degenerations' of 
Bridgeland stability have been 
introduced in~\cite{Bay}, \cite{Tolim}, \cite{Tcurve1}, 
and DT/PT wall-crossing is also 
observed in these degenerated stability conditions. 

In recent years,  
the wall-crossing formula
of DT type invariants have been established 
by Joyce-Song~\cite{JS}
and Kontsevich-Soibelman~\cite{K-S}
in a general setting. 
Since then, it turns out that 
a categorical approach is useful 
in the study of 
DT type 
curve counting invariants.  
Now several applications have been
obtained, e.g. DT/PT correspondence,
rationality conjecture. (cf.~\cite{BrH},~\cite{StTh}, 
~\cite{Tcurve1}, \cite{Tolim2}.)
One of the purposes of this paper is to give a survey 
of these recent developments. 

As for another purpose, we 
focus on the 
rationality 
conjecture of the generating series of 
PT invariants proposed in~\cite{PT}. 
The Euler characteristic version is proved in~\cite{Tolim2}, 
and the virtual cycle is involved in~\cite{BrH}.  
In this paper, 
assuming the announced
result by Behrend-Getzler~\cite{BG},
we give its another proof 
by discussing in the framework of~\cite{Tcurve1}.
The main idea is the same as in~\cite{Tolim2}, 
but the argument is simplified. 
We also discuss a conjectural 
Gopakumar-Vafa form of the generating series of 
PT invariants, and see that it is 
related to the multi-covering formula of 
generalized DT invariants introduced by 
Joyce-Song~\cite{JS}. 
We also give an 
 evidence of the conjectural multi-covering formula
when $X$ is a certain elliptically fibered Calabi-Yau 3-fold. 

\subsection{Plan of the paper}
In Section~\ref{sec:Stab}, 
we give a survey on stability 
conditions. 
In Section~\ref{sec:Curve}, we recall 
several curve counting invariants on 
Calabi-Yau 3-folds and the relevant 
conjectures, results. In Section~\ref{sec:Hall},
we recall the notion of Hall algebras and 
the generalized DT invariants counting 
one dimensional sheaves. 
In Section~\ref{rational}, we give a 
proof of the rationality of the 
generating series of PT invariants in 
the framework of~\cite{Tcurve1}.
In Section~\ref{sec:Prod}, we discuss
a Gopakumar-Vafa form of the generating 
series of PT invariants, and the 
multi-covering formula of generalized 
DT invariants.  

\subsection{Acknowledgement}
The author would like to thank the referee
for checking the manuscript carefully and 
give useful comments. 
This work is supported by World Premier 
International Research Center Initiative
(WPI initiative), MEXT, Japan. This work is also supported by Grant-in Aid
for Scientific Research grant (22684002), 
and partly (S-19104002),
from the Ministry of Education, Culture,
Sports, Science and Technology, Japan.

\subsection{Notation and Convention}
For a triangulated category $\dD$, 
the shift functor is denoted by $[1]$. 
For a set of objects $\sS \subset \dD$, 
we denote by $\langle \sS \rangle_{\tr}$
the smallest triangulated subcategory 
which contains $\sS$ and $0 \in \dD$. 
Also we denote by $\langle \sS \rangle_{\ex}$
the smallest extension closed 
subcategory of $\dD$ which contains 
$\sS$ and $0 \in \dD$. 
The abelian category of coherent sheaves on 
a variety $X$ is denoted by 
$\Coh(X)$. We say 
$F \in \Coh(X)$ is 
$d$-dimensional if its support is 
$d$-dimensional. 
We always assume that the second 
homology group $H_2(X, \mathbb{Z})$ is 
torsion free. If there is a torsion, 
then the arguments are applied 
if we replace $H_2(X, \mathbb{Z})$ by 
its torsion free part. 
For $\beta \in H_2(X, \mathbb{Z})$, we 
write $\beta>0$ if $\beta$ is a
class of an effective algebraic 
one cycle on $X$.

\section{Stability conditions}\label{sec:Stab}
We begin with recalling stability conditions 
on abelian categories, and explain typical 
wall-crossing phenomena. 
\subsection{Definitions of stability conditions}
\label{subsec:definition}
Classically there is a notion of a
stability condition on vector bundles on
smooth 
projective curves. Let $C$ be a smooth 
projective curve over $\mathbb{C}$ and 
$E$ a vector bundle on it. 
The slope of $E$ is defined by 
\begin{align*}
\mu(E) \cneq \deg(E)/\rank(E). 
\end{align*}
\begin{defi}\label{sbund}
A vector bundle $E$ on $C$ is (semi)stable 
if for any subbundle $0\neq F \subsetneq E$, we have 
\begin{align*}
\mu(F)<(\le) \mu(E). 
\end{align*}
\end{defi}
We have the following properties:
\begin{itemize}
\item If we fix rank $r$ and degree $d$, 
then there is a good moduli space of 
slope semistable vector bundles $E$ 
with $\rank(E)=r$ and $\deg(E)=d$. 
\item For any vector bundle $E$ on $C$, there is a filtration,
(Harder-Narasimhan filtration,) 
\begin{align*}
0=E_0 \subset E_1 \subset \cdots \subset E_N=E, 
\end{align*}
such that each subquotient $F_i=E_i/E_{i-1}$
is semistable with $\mu(F_i)>\mu(F_{i+1})$
for all $i$. 
\end{itemize}
A stability condition on an abelian category 
is defined to be a direct generalization of the 
above classical notion. 
Let $\aA$ be an abelian category, e.g.
the category of coherent 
sheaves on an algebraic variety.  
Recall that its Grothendieck group is defined by 
\begin{align*}
K(\aA) \cneq \bigoplus_{E\in\aA} \mathbb{Z}[E]/\sim, 
\end{align*}
where the equivalence relation $\sim$ is generated by 
\begin{align*}
[E_2] \sim [E_1]+[E_3],
\end{align*}
for all exact sequences $0\to E_1 \to E_2 \to E_3 \to 0$
in $\aA$. 
We fix a finitely generated abelian group $\Gamma$
together with a group homomorphism, 
\begin{align*}
\cl \colon K(\aA) \to \Gamma.
\end{align*}
For instance if $\aA=\Coh(X)$ for a 
smooth projective variety $X$, we can take $\Gamma$
to be the image of the Chern character map, 
\begin{align}\label{cl=ch}
\ch \colon K(\aA) \twoheadrightarrow \Gamma \subset H^{\ast}(X, \mathbb{Q}),
\end{align}
and $\cl=\ch$. 
Let $\mathbb{H} \subset \mathbb{C}$ be the subset
\begin{align*}
\mathbb{H} = \{ r\exp(\pi i \phi) : r>0, 0<\phi \le 1\}. 
\end{align*}
The following formulation of stability conditions is 
due to Bridgeland~\cite{Brs1}. 
\begin{defi}\label{def:A}
A stability condition on $\aA$ is a group homomorphism, 
\begin{align*}
Z \colon \Gamma \to \mathbb{C}, 
\end{align*}
satisfying the following axiom. 

(i) For any non-zero object $E\in \aA$, we have 
\begin{align*}
Z(E) \cneq Z(\cl(E)) \in \mathbb{H}.
\end{align*}
In particular the argument 
\begin{align*}
\arg Z(E) \in (0, \pi],
\end{align*}
is well-defined. An object $E\in \aA$ is called
$Z$-(semi)stable if for any non-zero subobject 
$0\neq F \subsetneq E$, we have 
\begin{align*}
\arg Z(F) <(\le) \arg Z(E). 
\end{align*}

(ii) For any object $E\in \aA$, there is a filtration, 
(Harder-Narasimhan filtration,)
\begin{align*}
0=E_0 \subset E_1 \subset \cdots \subset E_N=E, 
\end{align*}
such that each subquotient $F_i=E_i/E_{i-1}$ is 
$Z$-semistable with 
\begin{align*}
\arg Z(F_1)> \arg Z(F_2)> \cdots >\arg Z(F_N). 
\end{align*}
\end{defi}
Here we give some examples. 
\begin{exam}\label{exam:stab}
(i) Let $C$ be a smooth projective curve over 
$\mathbb{C}$ and take $\aA=\Coh(C)$. 
We set $\Gamma$ to be 
\begin{align*}
\Gamma= \mathbb{Z} \oplus \mathbb{Z}, 
\end{align*}
and a group homomorphism $\cl \colon K(C) \to \Gamma$
to be
\begin{align*}
\cl(E)=(\rank(E), \deg(E)). 
\end{align*}
Let $Z \colon \Gamma \to \mathbb{C}$ be
the map defined by 
\begin{align*}
Z(r, d)=-d+\sqrt{-1}r. 
\end{align*}
Then it is easy to see that $Z$ is a stability condition on 
$\Coh(C)$. 
An object $E\in \Coh(C)$ is $Z$-semistable if and only 
if $E$ is a torsion sheaf or $E$ is a semistable 
vector bundle
in the sense of Definition~\ref{sbund}. 

(ii) Let $A$ be a finite dimensional algebra
over $\mathbb{C}$ and $\aA$ the 
abelian category of 
finitely generated right $A$-modules. 
There is a finite number of simple
objects $S_1, S_2, \cdots, S_N$ 
in $\aA$ 
such that 
\begin{align*}
K(\aA) \cong \bigoplus_{i=1}^{N} \mathbb{Z}[S_i]. 
\end{align*}
We set $\Gamma =K(\aA)$ and $\cl=\mathrm{id}$. 
Choose elements, 
\begin{align*}
z_1, z_2, \cdots, z_N \in \mathbb{H}.
\end{align*}
Then the map 
$Z \colon \Gamma \to \mathbb{C}$
defined by 
\begin{align*}
Z\left(\sum_{i}a_i [S_i]\right) =\sum_{i}a_i z_i,
\end{align*}
is a stability condition on $\aA$. 

(iii) 
The following generalization of (i) 
will be used in the later sections. 
Let $X$ be a smooth projective 
variety over $\mathbb{C}$.
 We set
\begin{align*}
\Coh_{\le 1}(X) \cneq \{ E \in \Coh(X) :
\dim \Supp(E) \le 1 \}. 
\end{align*}
We set 
\begin{align*}
\Gamma_0 \cneq \mathbb{Z} \oplus H_2(X, \mathbb{Z}),
\end{align*}
and the group homomorphism 
$\cl_0 \colon K(\Coh_{\le 1}(X)) \to \Gamma_0$
to be 
\begin{align*}
\cl_0(E) \cneq (\ch_3(E), \ch_2(E)).
\end{align*}
By the Riemann-Roch theorem, $\cl_0(E)$ is also written as 
$(\chi(E), [E])$, 
where $[E]$ is the fundamental homology class 
determined by $E$ and $\chi(E)$ is the 
holomorphic Euler characteristic. 

Let $\omega$ be an 
$\mathbb{R}$-ample divisor on $X$.
We set $Z_{\omega} \colon \Gamma_0 \to \mathbb{C}$
to be
\begin{align*}
Z_{\omega}(n, \beta) \cneq 
-n +(\omega \cdot \beta) \sqrt{-1}. 
\end{align*}
Then $Z_{\omega}$ is a stability condition on 
$\Coh_{\le 1}(X)$. 
An object $E \in \Coh_{\le 1}(X)$ is 
$Z_{\omega}$-(semi)stable iff 
$E$ is $\omega$-Gieseker (semi)stable sheaf. 
(cf.~\cite{Hu}.)
If $\dim X=1$ and $\deg \omega=1$, then 
$Z_{\omega}$ coincides with
the stability condition constructed in 
(i). 
\end{exam}
\subsection{Wall-crossing phenomena}\label{subsec:Wall}
Here we explain a
rough idea of wall-crossing phenomena and 
a simple example. 
We set 
\begin{align*}
\Stab(\aA) \cneq \{ Z \in \Gamma_{\mathbb{C}}^{\vee} : 
Z \mbox{ is a stability condition on }\aA \}.
\end{align*}
For instance in Example~\ref{exam:stab} (ii), 
we have the identification, 
\begin{align*}
\Stab(\aA) \cong \mathbb{H}^N. 
\end{align*}
For $v\in \Gamma$, we are interested in 
`counting invariants', 
\begin{align*}
\Stab(\aA) \ni Z \mapsto I_{v}(Z) \in \mathbb{Q},
\end{align*}
where $I_v(Z)$ `counts' $Z$-semistable 
objects $E \in \aA$ with $\cl(E)=v$. 
There may be several choices of the 
definition of $I_v(Z)$. 
For instance we can consider moduli space of 
$Z$-semistable objects $E \in \aA$
with $\cl(E)=v$, denoted by $M_v(Z)$, and 
take $I_v(Z)$ to be 
\begin{align*}
I_v(Z)=\chi(M_v(Z)). 
\end{align*}
Here $\chi(\ast)$ is the topological Euler 
characteristic.
We need to check that the existence of the 
moduli space $M_v(Z)$, 
but this holds in the
cases given in Example~\ref{exam:stab}. 

In principle, there should be a wall and chamber
structure on the space $\Stab(\aA)$ such 
that $I_v(Z)$ is constant on a chamber but jumps 
on a wall. The set of walls is given by a
countable number of real codimension one submanifolds
$\{W_{\lambda}\}_{\lambda \in \Lambda}$ 
in $\Stab(\aA)$, and a chamber is a 
connected component, 
\begin{align*}
\cC \subset \Stab(\aA) \setminus \bigcup_{\lambda \in \Lambda}
W_{\lambda}. 
\end{align*}
For instance, let us consider the algebra $A$ given by 
\begin{align*}
A=
\begin{pmatrix}
\mathbb{C} & \mathbb{C} \\
0 & \mathbb{C}
\end{pmatrix}.
\end{align*}
Let $\aA$ be the abelian category 
of finitely generated right $A$-modules.
(In other words, $\aA$ is the category 
of representations of a quiver with 
two vertex and one arrow.) 
There are two simple objects
in $\aA$,  
\begin{align*}
S_i =\mathbb{C} \cdot e_i, \quad i=1, 2, 
\end{align*}
whose right $A$-actions are given by 
\begin{align*}
e_i \cdot \begin{pmatrix}
a_1 & a_3 \\
0 & a_2
\end{pmatrix}
=a_i e_i. 
\end{align*}
We take an object
 $E \in \aA$, 
which is isomorphic to $\mathbb{C}^2$
as a $\mathbb{C}$-vector space, and 
the right $A$-action is the standard one. 
There is an exact sequence in $\aA$, 
\begin{align}\label{SES}
0 \to S_2 \to E \to S_1 \to 0. 
\end{align}
Let us identify $\Stab(\aA)$
with $\mathbb{H}^2$, as in Example~\ref{exam:stab} (ii). 
For a stability condition 
\begin{align*}
Z=(z_1, z_2) \in \Stab(\aA)\cong \mathbb{H}^2,
\end{align*} 
the exact sequence (\ref{SES}) easily implies the 
following. 
\begin{align*}
E \mbox{ is }
\left\{
\begin{array}{cl}
Z\mbox{-stable} & \mbox{ if }\arg z_2 <\arg z_1 \\
Z\mbox{-semistable} & \mbox{ if }\arg z_2=\arg z_1 \\
\mbox{not }Z\mbox{-semistable} & \mbox{ if }\arg z_2>\arg z_1
\end{array}
  \right. 
\end{align*}
In particular for
an element
\begin{align*}
v=\cl(E)=(1, 1) \in \Gamma,
\end{align*}
the moduli space $M_v(Z)$ is 
\begin{align*}
M_v(Z)=\left\{
\begin{array}{cl}
\{E\} & \mbox{ if }\arg z_2 <\arg z_1 \\
\{E \} \cup \{S_1 \oplus S_2\}
& \mbox{ if }\arg z_2=\arg z_1 \\
\emptyset & \mbox{ if }\arg z_2>\arg z_1
\end{array}
  \right. 
\end{align*}
The `counting invariant'
$I_v(Z)=\chi(M_v(Z))$ is 
\begin{align*}
I_v(Z)=\left\{
\begin{array}{cl}
1 & \mbox{ if }\arg z_2 <\arg z_1 \\
2
& \mbox{ if }\arg z_2=\arg z_1 \\
0 & \mbox{ if }\arg z_2>\arg z_1
\end{array}
  \right. 
\end{align*}
Here we have observed wall-crossing phenomena 
of $I_v(Z)$, 
whose wall is given by 
\begin{align*}
W=\{(z_1, z_2) \in \mathbb{H}^2 : 
\arg z_1 =\arg z_2 \}. 
\end{align*}

\subsection{Weak stability conditions}\label{subsec:Weak}
A slightly generalized notion of 
stability conditions is sometimes 
useful. For instance if we consider 
stability conditions in the sense of 
Definition~\ref{def:A},
then there is no stability condition 
on $\Coh(X)$ if $\dim X \ge 2$. 
(cf.~\cite[Lemma~2.7]{Tolim}.) 
On the other hand, there 
are classical notions of 
stability conditions on 
$\Coh(X)$, such as slope stability.
(cf.~\cite{Hu}.) 
The slope stability can be formulated 
in the language of weak stability 
conditions introduced in~\cite{Tcurve1}. 

Let $\aA$ be an abelian category. 
As in Subsection~\ref{subsec:definition},  
we fix a finitely generated 
free abelian group $\Gamma$ together 
with a group homomorphism 
$\cl \colon K(\aA) \to \Gamma$. 
We also fix a filtration of $\Gamma$, 
\begin{align*}
0=\Gamma_{-1} \subsetneq
\Gamma_0 \subsetneq \Gamma_1 \subsetneq \cdots \subsetneq \Gamma_N
=\Gamma, 
\end{align*}
such that each subquotient $\Gamma_i/\Gamma_{i-1}$ is a 
free abelian group. 
\begin{defi}\label{def:wstab}
A weak stability condition on $\aA$ is
\begin{align*}
Z=\{ Z_i \}_{i=0}^{N} \in 
\prod_{i=0}^{N} \Hom_{\mathbb{Z}}(\Gamma_i/\Gamma_{i-1}, 
\mathbb{C}),
\end{align*}
such that the following conditions are 
satisfied: 

(i) For non-zero $E\in \aA$, take $-1 \le i \le N$
such that $\cl(E) \in \Gamma_i \setminus \Gamma_{i-1}$. 
(We regard $\Gamma_{-2}=\emptyset$.)
Then we have 
\begin{align*}
Z(E) \cneq Z_i([\cl(E)]) \in \mathbb{H}. 
\end{align*}
Here $[\cl(E)]$ is the class of 
$\cl(E)$ in $\Gamma_i/\Gamma_{i-1}$. 
We say $E\in \aA$ is $Z$-(semi)stable 
if for any exact sequence
$0 \to F \to E \to G \to 0$
in $\aA$, we have the inequality, 
\begin{align}\label{ineq:ZFG}
\arg Z(F) <(\le) \arg Z(G). 
\end{align}

(ii) There is a Harder-Narasimhan filtration 
for any $E\in \aA$. 
\end{defi}
When $N=0$, a weak 
stability conditions is a 
stability condition in the sense of 
Definition~\ref{def:A}.
\begin{rmk}
If the inequality (\ref{ineq:ZFG})
is strict, we have the following 
three possibilities: 
\begin{align}
\label{ZFEG1}
&\arg Z(F) < \arg Z(E) < \arg Z(G), \\
\label{ZFEG2}
&\arg Z(F) < \arg Z(E) = \arg Z(G), \\
\label{ZFEG3}
&\arg Z(F)= \arg Z(E) < \arg Z(G). 
\end{align} 
When $N=0$, i.e. $Z$ is a stability 
condition, then only the inequality (\ref{ZFEG1})
is possible. On the other hand when $N>0$, the 
inequalities (\ref{ZFEG2}), (\ref{ZFEG3}) 
are also possible. 
\end{rmk}
Here we give some examples.  
\begin{exam}\label{hereexam}
(i) Let $X$ be a $d$-dimensional 
smooth projective variety
and $\aA=\Coh(X)$. 
Take $\Gamma=\Imm \ch$, $\cl=\ch$
as in (\ref{cl=ch})
 and 
take a filtration 
\begin{align*}
\Gamma_0 \subset \Gamma_1 \subset \cdots \subset \Gamma_{d}, 
\end{align*}
given by 
\begin{align*}
\Gamma_i=\Gamma \cap H^{\ge 2d-2i}(X, \mathbb{Q}). 
\end{align*}
Choose 
\begin{align*}
0<\phi_d <\phi_{d-1} < \cdots <\phi_0<1
\end{align*}
and an ample divisor $\omega$ on $X$. 
Set $Z_i \colon \Gamma_i/\Gamma_{i-1} \to \mathbb{C}$
to be
\begin{align*}
Z_i(v)=\exp(\sqrt{-1}\pi \phi_i)\int_{X}v \cdot \omega^i. 
\end{align*}
Then $Z=\{Z_i\}_{i=0}^{d}$
is a weak stability condition on $\Coh(X)$. 
In this case, $E\in \Coh(X)$ is $Z$-semistable 
if and only if it is pure sheaf, i.e. 
there is no $0\neq F \subset E$ with 
$\dim \Supp(F)<\dim \Supp(E)$. 

(ii) Let $X$ be a smooth projective surface and 
take $\Gamma$ and $\cl$ as above. We set
$\Gamma_{0} \subset \Gamma_{1}=\Gamma$ to be
\begin{align*}
\Gamma_0=\Gamma \cap H^4(X, \mathbb{Q}), 
\end{align*}
hence 
\begin{align*}
\Gamma_1/\Gamma_0=\Gamma \cap (H^0 \oplus H^2). 
\end{align*}
We set $Z_i \colon \Gamma_i/\Gamma_{i-1} \to \mathbb{C}$
to be
\begin{align*}
Z_0(n)&=-n, \\
Z_1(r, D) &=-D\cdot \omega +\sqrt{-1}r. 
\end{align*}
Then $Z=\{Z_{i}\}_{i=0}^{1}$ is a 
weak stability condition
on $\Coh(X)$. 
An object $E\in \Coh(X)$ is $Z$-semistable if 
and only if $E$ is a torsion sheaf or an
$\omega$-slope semistable sheaf. 
(cf.~\cite{Hu}.)
\end{exam}

In~\cite{Tcurve1}, the space of weak 
stability conditions on triangulated categories is
introduced. 
Namely a weak stability condition on 
a triangulated category $\dD$ is 
a pair of $(Z, \aA)$, where $\aA$ is the heart 
of a bounded t-structure on $\dD$ and 
$Z$ is a weak stability condition on $\aA$. 
We denote by 
\begin{align}\label{StabG}
\Stab_{\Gamma_{\bullet}}(\dD), 
\end{align}
the set of weak stability 
conditions on $\dD$, satisfying 
some good properties,
i.e. local finiteness, support property.
See~\cite[Section~2]{Tcurve1} for the detail on 
these properties. Using the same 
argument by Bridgeland~\cite[Theorem~7.1]{Brs1}, it 
is proved in~\cite[Theorem~2.15]{Tcurve1}
that
the set (\ref{StabG}) has a natural topology and 
each connected component is a complex manifold.

\section{Curve counting invariants on Calabi-Yau 3-folds}\label{sec:Curve}
In this section, we recall several curve counting 
theories on Calabi-Yau 3-folds, 
conjectures and the results. 
 In what follows,
we call 
a smooth projective complex 3-fold \textit{Calab-Yau} if 
it satisfies the following condition, 
\begin{align*}
\bigwedge^{3} T_{X}^{\vee} \cong \oO_X, \quad
H^1(X, \oO_X)=0.  
\end{align*} 
For instance, the quintic 3-fold, 
\begin{align*}
X=\{ x_0^5 +x_1^5 +x_2^5 +x_3^5 +x_4^5=0\} 
\subset \mathbb{P}^4,
\end{align*}
is a famous example of a Calabi-Yau 3-fold. 
\subsection{Gromov-Witten theory}
Let $X$ be a smooth projective Calabi-Yau 3-fold
and $C$ a connected 1-dimensional reduced 
$\mathbb{C}$-scheme with at worst nodal 
singularities. A morphism of schemes 
\begin{align*}
f \colon C \to X,
\end{align*}
is a \textit{stable map}
if the set of isomorphisms $\phi \colon C \stackrel{\sim}{\to} C$
satisfying $f \circ \phi =f$ is a finite set. 
This condition is equivalent to one of the following conditions. 
\begin{itemize}
\item For any ample line bundle $\lL$ on $X$, the
line bundle $\omega_C \otimes f^{\ast} \lL^{\otimes 3}$
is an ample line bundle on $C$. Here $\omega_C$ is 
the dualizing sheaf of $C$. 
\item If $C' \subset C$ is an irreducible component such that $f(C')$
is a point, then 
\begin{align*}
2g(C') + \sharp \left( C' \cap (\overline{C\setminus C'}) \right) \ge 3.
\end{align*}
\end{itemize}
Here $g(\ast)$ is the arithmetic genus. 
The moduli space of such maps is constructed 
after we fix the following numerical data, 
\begin{align*}
g\in \mathbb{Z}_{\ge 0}, \quad \beta \in H_2(X, \mathbb{Z}). 
\end{align*}
We call a stable map $(C, f)$ as \textit{type $(g, \beta)$}
if $g(C)=g$ and 
the map $f$ satisfies $f_{\ast}[C]=\beta$. 
The moduli space of stable maps $(C, f)$ 
of type $(g, \beta)$ is denoted by,  
\begin{align}\label{moduli:map}
\overline{M}_{g}(X, \beta).
\end{align}
The moduli space (\ref{moduli:map})
is a Deligne Mumford stack of finite type over $\mathbb{C}$~\cite{Ktor}. 
However the space (\ref{moduli:map}) may be singular 
and its dimension may be different from its expected dimension. 
In fact the tangent space and the obstruction space
 of the space of maps 
$f \colon C \to X$ for a fixed $C$ are given by 
\begin{align*}
H^0(C, f^{\ast}T_X), \quad H^1(C, f^{\ast}T_X), 
\end{align*}
respectively. 
Hence the expected dimension of the space (\ref{moduli:map}) 
is 
\begin{align*}
&\chi(C, f^{\ast}T_X) +\dim \overline{M}_g \\
&=\frac{3}{2} \deg T_C +3g-3 \\ 
&=0. 
\end{align*}
Here $\overline{M}_g$ is the moduli 
space of genus $g$ stable curves. 
Here we have used the Riemann-Roch theorem
on $C$ and 
the Calabi-Yau assumption of $X$. 

Now there is a way to construct the 0-dimensional 
virtual fundamental cycle on (\ref{moduli:map}) 
via perfect obstruction theory~\cite{BF}, \cite{LTV}. 
By definition, a \textit{perfect obstruction theory} on a
scheme (or Deligne-Mumford stack) $M$ is a 
morphism in the derived category
of coherent sheaves $D^b \Coh(M)$,  
\begin{align}\label{pft}
 h \colon E^{\bullet} \to L_{M}, 
\end{align}
where $E^{\bullet}$ is a complex 
of vector bundles on $M$
concentrated on $[-1, 0]$ and $L_{M}$ is the cotangent 
complex of $M$. 
The morphism $h$ should satisfy that  
$h^0$ is an isomorphism and $h^{-1}$ is surjective. 
Given such a morphism (\ref{pft}), 
we are able to construct the 
virtual fundamental cycle, 
\begin{align*}
[M]^{\rm{vir}}  \in  A_{\rank E^0 -\rank E^{-1}}(M).   
\end{align*}
Here $A_{\ast}(M)$ is the Chow group 
of $M$. 
Roughly speaking, the cycle 
$[M]^{\rm{vir}}$ is constructed 
by taking the intersection of the intrinsic normal 
cone and the 0-section in the 
vector bundle stack $[(E^{-1})^{\vee}/(E^{0})^{\vee}]$. 
(See~\cite{BF}, \cite{LTV} for the detail.)

By~\cite{BF}, \cite{LTV}, there is a perfect
obstruction theory on the moduli space (\ref{moduli:map}). 
The resulting virtual fundamental cycle is denote by
\begin{align*}
[\overline{M}_{g}(X, \beta)]^{\rm{vir}} \in 
A_{0}(\overline{M}_g(X, \beta), \mathbb{Q}). 
\end{align*}
Integrating the virtual cycle, we obtain the 
GW invariant. 
\begin{defi}\emph{
The \textit{Gromov-Witten (GW) invariant} is defined by 
\begin{align*}
N_{g, \beta}^{\rm{GW}}= \int_{[\overline{M}_{g}(X, \beta)]^{\rm{vir}}} 1 \in \mathbb{Q}. 
\end{align*}}
\begin{rmk}
Since $\overline{M}_g(X, \beta)$ is not a scheme but 
a Deligne-Mumford stack, the resulting invariant 
$N_{g, \beta}^{\rm{GW}}$ is not an integer
in general. 
\end{rmk}
\end{defi}
One of the important examples is 
a contribution of multiple covers 
to a fixed super rigid rational curve. 
\begin{exam}\label{GW:rigid}
Let 
\begin{align*}
f \colon X \to Y,
\end{align*}
be a birational contraction which 
contracts a smooth super rigid 
rational curve $C\subset X$, i.e. 
\begin{align*}
N_{C/X}=\oO_{\mathbb{P}^1}(-1) \oplus \oO_{\mathbb{P}^1}(-1).
\end{align*}  
In this case, the computation of $N_{g, d[C]}^{\rm{GW}}$
can be reduced to a certain integration over
the space $\overline{M}_{g}(\mathbb{P}^1, d)$. 
We have the following diagram:
\begin{align*}
\xymatrix{
\cC \ar[r]^{\phi} \ar[d]_{\pi}  & \mathbb{P}^1, \\
 \overline{M}_{g}(\mathbb{P}^1, d)  &
}
\end{align*}
where $\pi$ is 
the universal curve and
$\phi$ is the universal morphism. 
Then we have 
\begin{align}\label{exam:-1-1}
N_{g, d[C]}^{\rm{GW}} = \int_{[{\overline{M}_{g}(\mathbb{P}^1, d)]^{\rm{vir}}}}
c_{\rm{top}}(R^1 \pi_{\ast} \phi^{\ast}\oO_{\mathbb{P}^1}(-1)^{\oplus 2}). 
\end{align}
The invariants (\ref{exam:-1-1}) are
 computed in~\cite{FaPan}, 
\begin{align*}
N_{0, d[C]}^{\rm{GW}}&=\frac{1}{d^3}, \quad N_{1, d[C]}=\frac{1}{12d}, \\ 
N_{g, d[C]}^{\rm{GW}}&=\frac{\lvert B_{2g} \rvert \cdot d^{2g-3}}{2g \cdot (2g-2)!}, \ 
g \ge 2. 
\end{align*}
Here $B_{2g}$ is the 2g-th Bernoulli number. 
\end{exam}

\subsection{Donaldson-Thomas theory}
Another curve counting invariant on 
a Calabi-Yau 3-fold $X$ is defined by 
the integration of the virtual fundamental cycle on 
the moduli space of subschemes, 
\begin{align}\label{sub}
Z \subset X, 
\end{align}
satisfying $\dim Z \le 1$. 
Given a numerical data, 
\begin{align*}
n\in \mathbb{Z}, \quad \beta \in H_2(X, \mathbb{Z}), 
\end{align*}
the relevant moduli space is the classical 
Hilbert scheme, 
\begin{align}\label{Hilb}
\Hilb_n(X, \beta), 
\end{align}
which parameterizes subschemes (\ref{sub})
satisfying
\begin{align}\label{chZ}
\chi(\oO_Z)=n, \quad [Z]=\beta.
\end{align}
Recall that the moduli space (\ref{Hilb})
is a projective scheme. 

The moduli space (\ref{Hilb}) is also 
interpreted as a moduli space of rank one torsion 
free 
sheaves on $X$ with a trivial first 
Chern class. 
Namely if $I$ is a torsion free sheaf of rank one, 
then $I$ fits into the exact sequence, 
\begin{align*}
0 \to I \to I^{\vee \vee} \to F \to 0,
\end{align*}
such that $F$ is one or zero dimensional sheaf. 
It can be shown that 
$I^{\vee \vee}$ is a line bundle on $X$, 
hence isomorphic to $\oO_X$ if its first Chern class is
zero. Hence $I$ is
isomorphic to $I_Z$, the ideal 
sheaf of a subscheme $Z \subset X$
with $\dim Z \le 1$. 
The condition (\ref{chZ})
is equivalent to the condition on the Chern character, 
\begin{align}\label{chI}
\ch(I_Z) &=(1, 0, -\beta, -n)  \\
 &\in H^0(X, \mathbb{Z}) \oplus H^2(X, \mathbb{Z})
 \oplus H^4(X, \mathbb{Z})
\oplus H^6(X, \mathbb{Z}). 
\end{align}
Here we have regarded $\beta$ and $n$ as elements of 
$H^4(X, \mathbb{Z})$ and $H^6(X, \mathbb{Z})$ 
by the Poincar\'e duality.
As a summary, there is a one to one correspondence 
between subschemes (\ref{sub})
satisfying (\ref{chZ})
and torsion free sheaves $I$ on $X$ satisfying (\ref{chI}),
via $Z \mapsto I_Z$.  

If we regard the space (\ref{Hilb})
as a moduli space of rank one torsion free
 sheaves, 
the deformation theory of coherent sheaves 
implies that the spaces
\begin{align*}
\Ext_{X}^1(I_Z, I_Z), \quad \Ext_X^2(I_Z, I_Z), 
\end{align*}
are tangent space and the obstruction 
space at the point $[Z] \in \Hilb_{n}(X, \beta)$
respectively. 
Since $X$ is a Calabi-Yau 3-fold, the 
Serre duality implies that 
\begin{align*}
\Ext_{X}^2(I_Z, I_Z) \cong \Ext_{X}^{1}(I_Z, I_Z)^{\vee}. 
\end{align*}
In particular the expected dimension of 
the space (\ref{Hilb}) is
\begin{align*}
\dim \Ext_{X}^1(I_Z, I_Z) - \dim \Ext_{X}^2(I_Z, I_Z)=0. 
\end{align*} 
In fact there is a perfect obstruction theory 
on $\Hilb_n(X, \beta)$, 
(cf.~\cite{Thom},) 
\begin{align*}
E^{\bullet} \to L_{\Hilb_n(X, \beta)}, 
\end{align*}
satisfying that 
\begin{align}\label{sym}
E^{\bullet} \cong E^{\bullet \vee}[1]. 
\end{align}
A perfect obstruction theory satisfying the 
symmetry (\ref{sym}) is called a \textit{perfect
symmetric obstruction theory}. 
We have the associated virtual fundamental cycle, 
\begin{align*}
[\Hilb_n(X, \beta)]^{\rm{vir}} \in A_0(\Hilb_n(X, \beta), \mathbb{Z}). 
\end{align*}
The DT invariant is defined by the integration
over the virtual fundamental cycle. 
\begin{defi}\emph{
The \textit{Donaldson-Thomas (DT) invariant}
is defined by 
\begin{align}\label{def:DT}
I_{n, \beta}=\int_{[\Hilb_n(X, \beta)]^{\rm{vir}}}1 \in \mathbb{Z}.
\end{align}
}
\end{defi}
So far, $I_{n, \beta}$ are computed in several examples
in terms of generating functions. 
\begin{exam}\label{ex:DT}
(i) In the case of $\beta=0$, 
the generating series of $I_{n, 0}$ is 
computed by Li~\cite{Li}, Behrend-Fantechi~\cite{BBr}
 and Levine-Pandharipande~\cite{LP}, 
\begin{align*}
\sum_{n\in \mathbb{Z}}I_{n, 0}q^n
=M(-q)^{\chi(X)}. 
\end{align*}
Here $M(q)$ is the MacMahon function, 
\begin{align*}
M(q) &=\prod_{k \ge 1}\frac{1}{(1-q^k)^k} \\
&=1+q+3q^2 +6q^3 + \cdots. 
\end{align*}
(ii) Let $C \subset X$ is a super rigid 
rational curve as in Example~\ref{GW:rigid}. 
Then the invariant $I_{n, d[C]}$ is computed by 
Behrend-Bryan~\cite{BeBryan}, 
\begin{align*}
\sum_{n, d}I_{n, d[C]}q^n t^{d}
=M(-q)^{\chi(X)}\prod_{k \ge 1}(1-(-q)^k t)^k. 
\end{align*}
\end{exam}

\subsection{DT theory via Behrend function}
The integration (\ref{def:DT})
is usually difficult to compute. On the other hand, 
Behrend~\cite{Beh} shows that the invariant (\ref{def:DT}) 
is also obtained as a certain weighted Euler 
characteristic of a certain constructible 
function on $\Hilb_n(X, \beta)$. 
In many situations, computations of 
weighted Euler characteristic are easier 
than computations of virtual fundamental cycles. 

In fact for any $\mathbb{C}$-scheme 
$M$, Behrend~\cite{Beh} constructs
a canonical constructible function, 
\begin{align*}
\nu_{M} \colon M \to \mathbb{Z},
\end{align*}
satisfying the following properties. 
\begin{itemize}
\item If $\pi \colon M_1 \to M_2$ is a smooth morphism 
with relative dimension $d$, we have 
\begin{align*}
\nu_{M_1}=(-1)^d \pi^{\ast} \nu_{M_2}. 
\end{align*}
\item For $p\in M$, suppose that there is an analytic 
open neighborhood $p\in U \subset M$, a complex manifold
$V$ and a holomorphic function $f\colon V \to \mathbb{C}$
such that $U \cong \{df=0 \}$. Then we have 
\begin{align}\label{nu}
\nu(p)=(-1)^{\dim V}(1-\chi(M_p(f))). 
\end{align}
Here $M_p(f)$ is the Milnor fiber of $f$ at $p\in V$. 
\item If $M$ has a symmetric perfect obstruction theory, we have 
\begin{align}\label{Mvir}
\int_{[M]^{\rm vir}}1 &=\int_{M} \nu_{M} d\chi, \\
\notag
&\cneq \sum_{k \in \mathbb{Z}} k \chi(\nu^{-1}(k)).  
\end{align}
\end{itemize}
Here the Milnor fiber $M_p(f)$ is defined as follows. 
Let $p\in V' \subset V$ be an analytic small neighborhood 
and fix a norm $\lVert \ast \rVert$ on $V'$. 
Then for $0<\varepsilon \ll \delta \ll 1$, the
topological type of the space
\begin{align}\label{Mil}
\{ z\in V' : \lVert z -p \rVert \le \delta, \
f(z)=f(p)+\epsilon \}, 
\end{align}
does not depend on $\varepsilon$, $\delta$.
The Milnor fiber $M_p(f)$ is defined to be the 
topological space (\ref{Mil}). 

By the property (\ref{Mvir}), the invariant $I_{n, \beta}$
is also obtained by 
\begin{align*}
I_{n, \beta}= \int_{\Hilb_n(X, \beta)} \nu d\chi. 
\end{align*}
Here we have written $\nu_{\Hilb_n(X, \beta)}$ 
as $\nu$ for simplicity. 
An important fact is that 
the local moduli space of objects
in $\Coh(X)$ is 
analytically locally  
written as a critical locus of some 
holomorphic function on a complex manifold
up to gauge equivalence. 
This fact is proved in~\cite[Theorem~5.2]{JS} in a more general 
setting. 
In particular 
the function $\nu$
on $\Hilb_n(X, \beta)$
 can be computed 
using the expression (\ref{nu}). 

A rough idea of the proof of the
critical locus condition 
in~\cite[Theorem~5.2]{JS}
is as follows: 
for $E \in \Coh(X)$, we are interested in 
the deformations of $E$. 
By applying spherical twists associated to 
line bundles, we may assume that $E$ 
is a locally free sheaf, or equivalently 
a holomorphic vector bundle. 
(cf.~\cite[Corollary~8.5]{JS}.)
Let 
\begin{align*}
\overline{\partial} \colon 
E \to E\otimes \Omega^{0, 1},
\end{align*}
be the $\overline{\partial}$-connection 
which determines a holomorphic structure 
of $E$, 
where $\Omega^{0, 1}$ is the sheaf of 
$(0, 1)$-forms of $X$. 
Then giving a deformation of $E$ is 
equivalent to giving a deformation of $\overline{\partial}$
up to gauge equivalence.
This is equivalent to giving 
\begin{align*}
A \in A^{0, 1}(X, \eE nd(E)),
\end{align*}
where $A^{0, 1}(X, \eE nd(E))$ is the space of 
$\eE nd(E)$-valued $(0, 1)$ forms, 
satisfying 
\begin{align}\label{partial}
(\overline{\partial} +A)^2 =0, 
\end{align}
up to gauge equivalence. 
The equation (\ref{partial}) is equivalent to 
\begin{align*}
\overline{\partial}A + A \wedge A=0. 
\end{align*}
Let $\mathrm{CS}$ be the 
\textit{holomorphic Chern Simons function}, 
\begin{align*}
\mathrm{CS} \colon A^{0, 1}(X, \eE nd(E))
\to \mathbb{C}, 
\end{align*}
defined by 
\begin{align*}
\mathrm{CS}(A) =\int_{X} \left( \frac{1}{2} \overline{\partial}A 
\wedge A +\frac{1}{3} A \wedge A \wedge A \right) \wedge
\sigma_X,
\end{align*}
where $\sigma_X$ is a no-where vanishing 
holomorphic 3-form on $X$. 
(cf.~\cite{Thom}.)
Then $A \in A^{0, 1}(X, \eE nd(E))$
satisfies the equation (\ref{partial})
if and only if $A$ is a critical locus of 
the function $\mathrm{CS}$. Therefore the local 
moduli space of $E$ is written as 
\begin{align*}
\{d\mathrm{CS}=0\}/G, 
\end{align*}
where $G$ is the group of isomorphisms of $E$
as a $C^{\infty}$-vector bundle, i.e. 
the local moduli space of objects
in $\Coh(X)$ is written as a critical locus
up to gauge equivalence. 

However $A^{0, 1}(X, \eE nd(E))$ is 
an infinite dimensional vector space, and we need to 
find suitable finite dimensional vector subspace of 
$A^{0, 1}(X, \eE nd(E))$. 
This is worked out in~\cite[Theorem~5.2]{JS} by using the 
Hodge theory.
Namely the space of harmonic forms
$U$ on 
$A^{0, 1}(X, \eE nd(E))$ is finite dimensional, 
satisfying $U\cong \Ext^1(E, E)$, and 
we restrict $\mathrm{CS}$ to $U$.  
For the detail, see~\cite[Theorem~5.2]{JS}.
\begin{exam}
(i) Suppose that $\Hilb_n(X, \beta)$ is non-singular
of dimension $d$. By the property (\ref{nu}), 
the Behrend function on $\Hilb_n(X, \beta)$
coincides with $(-1)^d$. Therefore we have 
\begin{align*}
I_{n, \beta}=(-1)^d \chi(\Hilb_n(X, \beta)). 
\end{align*}
(ii) Suppose that $\Hilb_n(X, \beta)$ is 
isomorphic to the spectrum of 
$\mathbb{C}[z]/z^{k}$ for some
$k\ge 1$. (For instance, the  
local moduli space of a rigid rational 
curve $C\subset X$ with $N_{C/X}=\oO_C \oplus 
\oO_C(-2)$ is written as
the spectrum of $\mathbb{C}[z]/z^k$ for some $k\ge 1$.)
Then $\Hilb_n(X, \beta)$ is 
written as $\{ df=0 \}$, where $f$ is
\begin{align*} 
f \colon \mathbb{C} \ni z \mapsto z^{k+1} \in \mathbb{C}. 
\end{align*}
The Milnor fiber of $f$ at $0 \in \mathbb{C}$
is $(k+1)$-points, hence we have 
\begin{align*}
I_{n, \beta}=\nu(0)=k. 
\end{align*}
\end{exam}

\subsection{GW/DT correspondence}
As we mentioned before, GW
invariant is not necessary an integer
while DT invariant is always an integer. 
Although both theories seem different, 
 Maulik-Nekrasov-Okounkov-Pandharipande~\cite{MNOP}
propose a conjecture on a certain
relationship between GW and DT theories. 
The conjecture is formulated  
in terms of generating functions, and
it also implies a hidden integrality of GW invariants. 

Let us introduce the generating functions. 
The generating function of GW side is 
\begin{align*}
\mathrm{GW}(X) =\sum_{g\ge 0, \beta>0}N_{g, \beta}^{\rm{GW}}\lambda^{2g-2}t^{\beta}. 
\end{align*}
Here $\beta>0$ means $\beta$ is a homology class of 
a non-zero effective one cycle on $X$. 
Similarly the generating function of DT side is 
\begin{align*}
\DT(X)=\sum_{n\in \mathbb{Z}, \beta \ge 0}I_{n, \beta}q^n t^{\beta}. 
\end{align*}
The series $\DT(X)$ can be written as 
\begin{align*}
\DT(X)=\sum_{\beta \ge 0} \DT_{\beta}(X) t^{\beta}, 
\end{align*}
where $\DT_{\beta}(X)$ is a Laurent series of $q$. 
(It is easy to check that $\Hilb_n(X, \beta)=\emptyset$, 
hence $I_{n, \beta}=0$, 
for $n\ll 0$.)
The term $\DT_0(X)$ is a contribution of 
zero dimensional subschemes, and does not 
contribute to curve counting on $X$. 
The \textit{reduced DT series}
are defined by 
\begin{align}\label{reduced}
\DT'(X) =\frac{\DT(X)}{\DT_0(X)}, \
\DT'_{\beta}(X) =\frac{\DT_{\beta}(X)}{\DT_0(X)}. 
\end{align}
Note that $\DT_0(X)$ is given by the power 
of the MacMahon function by Example~\ref{ex:DT} (i). 
\begin{conj}\emph{ \bf{\cite{MNOP}} }\label{conj:GWDT}

{\bf (i) (Rationality conjecture): }
The Laurent series $\DT_{\beta}'(X)$ is the 
Laurent expansion of a rational function of $q$, 
invariant under $q\leftrightarrow 1/q$. 

{\bf (ii) (GW/DT correspondence): }
By the variable change $q=-e^{i\lambda}$, we have
the equality of the generating series, 
\begin{align*}
\exp \mathrm{GW}(X)=\DT'(X). 
\end{align*} 
\end{conj}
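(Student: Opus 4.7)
The plan is to split the conjecture into its two parts and attack them with quite different machinery. For part (i), the rationality, I would reduce to the rationality of the PT generating series by realising the DT/PT correspondence as a wall-crossing phenomenon in a one-parameter family of weak stability conditions on a suitable heart $\aA \subset D^b \Coh(X)$, as suggested by Pandharipande-Thomas and formalised in $\cite{Tcurve1}$. Concretely, one constructs a family $\sigma_t$ of weak stability conditions (in the sense of Definition~\ref{def:wstab}) on the category of two-term complexes with numerical class $(1,0,-\beta,-n)$, such that for $t \gg 0$ the semistable objects are ideal sheaves $I_Z$ (giving $\DT$) while for $t \ll 0$ they are PT pairs $I^{\bullet}=(\oO_X \to F)$ (giving $\PT$). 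The Joyce-Song / Kontsevich-Soibelman wall-crossing formula, applied in the motivic Hall algebra framework, then expresses $\DT_\beta(X)$ as a product of $\PT_\beta(X)$ with a universal factor built out of invariants of one- and zero-dimensional sheaves. The zero-dimensional contribution is exactly $\DT_0(X)$ from Example~\ref{ex:DT}(i), so dividing as in (\ref{reduced}) identifies $\DT'_\beta(X)$ with $\PT_\beta(X)$ up to a one-dimensional-sheaf factor that is itself $q\leftrightarrow 1/q$ invariant. Rationality and the $q \leftrightarrow 1/q$ symmetry of $\DT'_\beta(X)$ then follow from the corresponding properties of $\PT_\beta(X)$, where the symmetry has a conceptual origin: the Calabi-Yau Serre duality $\Ext^2 \cong (\Ext^1)^\vee$ produces the symmetric perfect obstruction theory whose Behrend function identity forces invariance under $n \mapsto -n$ on the generating series.

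For part (ii), the GW/DT correspondence, the strategy is considerably more delicate. For toric Calabi-Yau $3$-folds the approach of Maulik-Oblomkov-Okounkov-Pandharipande via virtual torus localization reduces both sides to the topological vertex, where the correspondence becomes a combinatorial identity of generating functions. To propagate this to general $X$, I would attempt a degeneration argument: choose a one-parameter degeneration of $X$ to a simple normal crossing Calabi-Yau with toric components and apply the degeneration formulas for both GW and DT theory, matching relative invariants term by term. A parallel, more conceptual, approach is to first pass through the PT theory: use the conjectural PT/GW equivalence together with the DT/PT equivalence established in part (i), so the full GW/DT correspondence reduces to the (still difficult) rewriting of $\PT'(X)$ in terms of a Gopakumar-Vafa form involving integer BPS invariants $n_{g,\beta}$, as discussed in the paper's Section~\ref{sec:Prod}.

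The main obstacle is clearly part (ii) in its full generality: part (i) is a structural consequence of wall-crossing plus the intrinsic Calabi-Yau symmetry of PT theory and is within reach of the Hall algebra techniques already developed in the paper, whereas part (ii) requires a geometric mechanism behind the variable change $q=-e^{i\lambda}$ that is presently understood only through Gopakumar-Vafa invariants. In particular, the multi-cover formula for generalized DT invariants counting one-dimensional sheaves (treated in Section~\ref{sec:Prod}) is exactly the ingredient one would need to transfer integrality from one side to the other, and without a geometric construction of the BPS invariants $n_{g,\beta}$ realising this formula the conjectural GV form remains the bottleneck. Consequently, a realistic target for a self-contained proof in the spirit of the present paper is part (i) alone, with part (ii) left as a guiding conjecture whose toric case is known and whose general case is reduced, via the wall-crossing of part (i), to the PT/GW equivalence plus the conjectural multi-covering formula.
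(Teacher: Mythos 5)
Your treatment of part (ii) matches the paper's: it is left as a conjecture, verified only in toric and local-curve cases, and the paper offers no proof of it. For part (i), however, your argument has a genuine gap at the decisive step. Reducing $\DT'_\beta(X)$ to $\PT_\beta(X)$ by the DT/PT wall-crossing is indeed the first half of the paper's strategy (formula (\ref{product2}) shows the wall-crossing factor is exactly $\DT_0(X)$, so $\DT'_\beta=\PT_\beta$ on the nose, with no residual one-dimensional-sheaf factor). But you then assert that the rationality and the $q\leftrightarrow 1/q$ symmetry of $\PT_\beta(X)$ follow from the Calabi--Yau Serre duality $\Ext^2\cong(\Ext^1)^\vee$ and the resulting symmetric obstruction theory, ``forcing invariance under $n\mapsto -n$ on the generating series.'' This is false as stated: the PT moduli spaces are not symmetric in $n$ (for a super-rigid rational curve $P_n(X,[C])\cong\mathbb{P}^{n-1}$ is empty for $n\le 0$ and $P_{n,[C]}=(-1)^{n-1}n$), so there is no termwise symmetry $P_{n,\beta}=P_{-n,\beta}$, and the symmetric obstruction theory by itself yields no functional equation for the Laurent series $\PT_\beta(q)$.

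The missing ingredient is a \emph{second} wall-crossing, internal to the PT side: the paper constructs the one-parameter family of weak stability conditions $Z_{\omega,\theta}$ on the heart $\aA_X=\langle\oO_X,\Coh_{\le 1}(X)[-1]\rangle_{\ex}$ and crosses walls from $\theta$ near $1$ (where the semistable objects are stable pairs, giving $\PT$) down to $\theta=1/2$. This yields the product formula (\ref{product1}): $\PT(X)$ factors into exponentials of the one-dimensional-sheaf invariants $N_{n,\beta}$ times the series $\sum_{n,\beta}L_{n,\beta}q^nt^{\beta}$ of invariants at $\theta=1/2$. Rationality and the $q\leftrightarrow 1/q$ symmetry then come from two separate facts: the periodicity and duality of the $N_{n,\beta}$ make each exponent $\sum_{n>0}(-1)^{n-1}nN_{n,\beta}q^n$ the expansion of a rational function invariant under $q\leftrightarrow 1/q$, and the stability condition at $\theta=1/2$ is self-dual under $E\mapsto\dR\hH om(E,\oO_X)$ (Proposition~\ref{prop:Mn} (iii)--(iv)), which forces $L_{n,\beta}=L_{-n,\beta}$ and $L_{n,\beta}=0$ for $\lvert n\rvert\gg 0$, i.e.\ a symmetric Laurent polynomial. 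It is this derived-dual symmetry of a specific degenerate stability condition, not the Serre duality of the obstruction theory, that produces the functional equation; without this second wall-crossing your reduction to $\PT_\beta$ leaves the rationality conjecture unproved.
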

Here we need some explanation on the 
above conjecture.
The series $\DT_{\beta}'(X)$ is a priori 
a Laurent series of $q$ and 
it is not obvious whether it converges or not 
near $q=0$. The rationality conjecture 
asserts that $\DT_{\beta}'(X)$
actually converges near $q=0$, and 
moreover it can be analytically continued 
to give a meromorphic function
(in fact rational function) on the 
$q$-plane. The invariance
under $q\leftrightarrow 1/q$
implies that the above analytic continuation   
satisfies the automorphic property with respect 
the transformation $q\leftrightarrow 1/q$. 
For instance in the situation of Example~\ref{GW:rigid}, 
the series $\DT_{[C]}'(X)$ is 
\begin{align}\notag
\DT_{[C]}'(X) &=q-2q^2 +3q^3- \cdots, \\
\label{DTC}
&= \frac{q}{(1+q)^2}.
\end{align}
The rational function (\ref{DTC})
is invariant under $q \leftrightarrow 1/q$. 

If we assume the rationality conjecture, 
we can expand $\DT'(X)$ near $q=-1$, and 
write it by the $\lambda$-variable 
via $q=-e^{i\lambda}$.
 The invariance 
of $\DT_{\beta}'(X)$ under $q\leftrightarrow 1/q$
implies
 that $i$ is not involved in the $\lambda$-expansion. 
The GW/DT correspondence asserts that the coefficients 
of the above expansion are described in terms of GW invariants. 

So far the above conjecture has been 
checked in several situations. 
For instance the GW/DT correspondence 
for a local $(-1, -1)$-curve
can be checked from Example~\ref{GW:rigid} and Example~\ref{ex:DT}, 
as discussed in~\cite{BeBryan}. 
Also GW/DT correspondence for toric Calabi-Yau 3-folds
and local curves are proved in~\cite{MNOP} and~\cite{OkPa} respectively, 
by using torus localization and degeneration formula.  
On the other hand,
at this moment, these arguments are 
applied to the above specific examples, and 
not to arbitrary Calabi-Yau 3-folds. 
We have few tools in approaching 
 Conjecture~\ref{conj:GWDT} 
in a general setting, 
except the recent progress of 
\textit{wall-crossing formula}
of DT type invariants. 
This is 
established by Joyce-Song~\cite{JS} and Kontsevich-Soibelman~\cite{K-S}, 
and 
is an effective tool in studying DT type 
curve counting invariants for arbitrary Calabi-Yau 
3-folds. 
So far, several applications have been given, including 
Conjecture~\ref{conj:GWDT} (i). 

A rough idea of the application of the
 wall-crossing formula
 is as follows.
Recall that the moduli space $\Hilb_n(X, \beta)$
is interpreted as a moduli space of torsion free
rank one sheaves on $X$. 
This is nothing but the moduli space of 
stable objects on $\Coh(X)$ w.r.t. weak 
stability conditions in Example~\ref{hereexam} (i). 
One may try to change weak stability conditions on 
$\Coh(X)$, construct other DT type invariants
counting stable objects,
and see wall-crossing phenomena as we discussed
in Subsection~\ref{subsec:Wall}. 
 However we can easily see that 
 there is no interesting 
wall-crossing phenomena w.r.t weak stability 
conditions constructed in Example~\ref{hereexam} (i).
Instead we  
can also study (weak) stability conditions on 
other abelian subcategory in the derived category 
of coherent sheaves $D^b \Coh(X)$,
e.g. the heart of a bounded t-structure on $D^b \Coh(X)$.
Then we can construct DT type invariants
counting stable objects in the derived category, 
and the wall-crossing formula  
describes how these invariants vary under change of 
(weak) stability conditions. 
If we choose some specific (weak) stability condition, 
then the generating series sometimes becomes simpler 
than the original DT series, thus giving 
some non-trivial result to the DT series. 

As we mentioned, an important point is 
that the wall-crossing formula is applied for any 
Calabi-Yau 3-fold, and not restricted to 
specific examples, e.g. toric Calabi-Yau 3-folds. 
Using this new kind of technology, Conjecture~\ref{conj:GWDT} (i) 
is now solved.\footnote{We need the result of~\cite{BG}
which is not yet written at this moment.}
 We will discuss this more in Subsection~\ref{subsec:product}
below. 

\subsection{Pandharipande-Thomas theory}\label{subsec:PT}
Another application of the wall-crossing formula 
is the so called DT/PT correspondence, 
that is the correspondence between 
DT invariants and invariants counting 
stable pairs~\cite{PT}.
The notion of stable pairs is introduced by
Pandharipande-Thomas~\cite{PT}
in order to give a geometric understanding of
the reduced DT theory (\ref{reduced}). By definition, a 
\textit{stable 
pair} on a Calabi-Yau 3-fold $X$ is a pair 
\begin{align*}
(F, s), 
\end{align*}
where $F$ is a coherent sheaf 
on $X$
and $s \colon \oO_X \to F$ is a morphism satisfying 
the following. 
\begin{itemize}
\item $F$ is a pure one dimensional sheaf, 
i.e. there is no zero dimensional subsheaf 
in $F$. 
\item The cokernel of $s$ is a zero dimensional sheaf. 
\end{itemize}
For instance let $C\subset X$ be a smooth 
curve and $D \subset C$ a divisor on $C$. 
We set $F=\oO_C(D)$ and define the morphism
 $s$ to be the composition, 
\begin{align*}
s \colon \oO_X \twoheadrightarrow \oO_C \hookrightarrow \oO_C(D). 
\end{align*}
Then the pair $(F, s)$ is a stable pair. 
As the above example indicates, 
roughly speaking, a stable pair is a 
pair of a 
curve on $X$ and an effective divisor on it. 

Note that if $Z \subset X$ is a subscheme 
giving a point in $\Hilb_n(X, \beta)$, 
we have a pair
\begin{align*}(\oO_Z, s), \quad s \colon 
\oO_X \twoheadrightarrow \oO_Z, 
\end{align*} 
where $s$
is a natural surjection. 
The pair $(\oO_Z, s)$ fails to be a stable 
pair if and only if $\oO_Z$ contains a
zero dimensional subsheaf. On the 
other hand, a stable pair $(F, s)$ 
determines a point in $\Hilb_n(X, \beta)$
if and only if $s$ is surjective. 

Similarly to the DT theory, we consider the moduli 
space of stable pairs $(F, s)$
satisfying 
\begin{align*}
[F]=\beta, \quad \chi(F)=n. 
\end{align*}
Here $[F]$ is the fundamental homology 
class determined by the one dimensional sheaf $F$. 
The resulting moduli space is 
denoted by 
\begin{align}\label{moduli:PT}
P_n(X, \beta).
\end{align}
The moduli space (\ref{moduli:PT}) is proved to be 
a
projective scheme in~\cite{PT}. 
Moreover the space (\ref{moduli:PT}) is 
interpreted as a moduli space of two term complexes, 
\begin{align}\label{twoterm}
I^{\bullet}=\cdots \to 
0 \to \oO_X \stackrel{s}{\to} F \to 0 \to \cdots, 
\end{align}
in the derived category of coherent sheaves, i.e. 
\begin{align*}
I^{\bullet} \in D^b \Coh(X). 
\end{align*}
The deformation theory of objects in the 
derived category yields that the spaces
\begin{align*}
\Ext_{X}^{1}(I^{\bullet}, I^{\bullet}), \quad 
\Ext_{X}^2(I^{\bullet}, I^{\bullet}),
\end{align*}
are tangent space and the obstruction space 
respectively, which are dual by the Serre duality. 
Similarly to the DT theory, the above 
deformation theory provides a perfect 
symmetric obstruction theory on
the space (\ref{moduli:PT}), hence 
the 0-dimensional virtual cycle. 
\begin{defi}\emph{
The \textit{Pandharipande-Thomas (PT) invariant} is defined by 
\begin{align*}
P_{n, \beta}=\int_{[P_n(X, \beta)]^{\rm{vir}}}1 \in \mathbb{Z}. 
\end{align*}
}
\end{defi}
As in the DT case, the invariant $P_{n, \beta}$ 
is also defined by 
\begin{align*}
P_{n, \beta}=\int_{P_n(X, \beta)}\nu d\chi,
\end{align*}
for the Behrend function, 
\begin{align*}
\nu \colon P_n(X, \beta) \to \mathbb{Z}. 
\end{align*}
\begin{exam}\label{ex:DTPT}
Let $C\cong \mathbb{P}^1 \subset X$ be
a super rigid rational curve as in Example~\ref{GW:rigid}. 
Then $(F, s)$ is a stable pair with $[F]=[C]$
and $\chi(F)=n$ if and only if
\begin{align*}
F=\oO_C(n-1), \quad 
s\in H^0(C, \oO_C(n-1)) \setminus \{0\}. 
\end{align*}
Hence we have, 
\begin{align*}
P_n(X, [C]) &\cong \mathbb{P}(H^0(C, \oO_C(n-1))), \\
 &\cong \mathbb{P}^{n-1}.
\end{align*}
Therefore we have 
\begin{align*}
P_{n, [C]} &=(-1)^{\dim P_n(X, [C])} \chi(P_n(X, [C])), \\
           &=(-1)^{n-1} n.
\end{align*}
The generating series is 
\begin{align*}
\sum_{n\in\mathbb{Z}}P_{n, [C]}q^n =
q-2q^2 +3q^3 -\cdots. 
\end{align*}
Note that the above series coincides with $\DT_{[C]}'(X)$ by (\ref{DTC}). 
\end{exam}
Similarly to the DT theory, we 
consider the generating series, 
\begin{align*}
\PT(X) &=\sum_{n\in \mathbb{Z}, \beta \ge 0}
P_{n, \beta}q^n t^{\beta} \\
 &=1+\sum_{\beta>0} \PT_{\beta}(X),
\end{align*}
where $\PT_{\beta}(X)$ is a Laurent series of $q$. 
In~\cite{PT}, Pandharipande-Thomas propose the following 
conjecture. 
\begin{conj}\label{conj:DTPT}
We have the equality of the generating series, 
\begin{align}\label{form:DTPT}
\DT'_{\beta}(X)=\PT_{\beta}(X). 
\end{align}
\end{conj}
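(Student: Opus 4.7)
My plan is to realize the DT/PT correspondence as a wall-crossing identity in a suitable space of weak stability conditions on $D^b \Coh(X)$, following the philosophy sketched in Subsection~\ref{subsec:PT} and using the wall-crossing machinery of Joyce-Song and Kontsevich-Soibelman together with Example~\ref{hereexam} and the weak stability formalism of Subsection~\ref{subsec:Weak}. The key point is that both $\Hilb_n(X,\beta)$ and $P_n(X,\beta)$ parameterize two-term complexes in $D^b\Coh(X)$ of the form $I^{\bullet} = (\oO_X \to F)$ of Chern character $(1,0,-\beta,-n)$: for DT we allow $F$ to have zero-dimensional torsion, for PT we insist on purity and surjectivity in dimension one. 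Thus both should appear as moduli of semistable objects in a common abelian heart $\aA_p \subset D^b\Coh(X)$, for two different chambers of stability conditions on $\aA_p$, so that the numerical comparison is controlled by wall-crossing.

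Concretely, I would proceed as follows. First, following~\cite{Tcurve1}, introduce the tilted heart
\begin{align*}
\aA_p = \langle \Coh_{\le 1}(X),\ \oO_X[-1]\rangle_{\ex} \subset D^b\Coh(X),
\end{align*}
so that any $I^{\bullet}$ as above lies in $\aA_p[1]$ (or rather in $\aA_p$ after shifting), with the natural numerical group $\Gamma = \mathbb{Z}\oplus \Gamma_0$ refining the class $\cl_0$ of Example~\ref{exam:stab}(iii). Next, construct a one-parameter family of weak stability conditions $\sigma_t = (Z_t,\aA_p)$ for $t\in \mathbb{R}$, where $Z_t$ uses $Z_\omega$ on $\Coh_{\le 1}(X)$ and puts $\oO_X[-1]$ on a phase that depends on $t$. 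The design should ensure: (a) for $t\gg 0$, the $\sigma_t$-semistable objects of class $(-1,0,\beta,n)$ are precisely the ideal sheaves $I_Z[1]$ parameterized by $\Hilb_n(X,\beta)$; (b) for $t\ll 0$ the $\sigma_t$-semistable objects are exactly the shifted PT complexes $I^{\bullet}[1]$ parameterized by $P_n(X,\beta)$. Both characterizations follow by analyzing how a subobject in $\aA_p$ can destabilize $I_Z[1]$ or $I^{\bullet}[1]$: the destabilizers are precisely zero-dimensional sheaves, which explains why the extremal chambers detect the two moduli.

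Between $t=+\infty$ and $t=-\infty$ there is only a locally finite collection of walls $\{W_\beta'\}$ at which a semistable object fits into an exact sequence $0 \to I^{\bullet}[1] \to E \to Q \to 0$ in $\aA_p$, with $Q \in \Coh_{\le 1}(X)$ supported on one-dimensional components of the PT curve, and at each wall one may apply the Joyce-Song/Kontsevich-Soibelman wall-crossing formula. The generating series identity comes from summing these contributions using the Hall-algebra identities of Section~\ref{sec:Hall}: the pointwise Behrend-function identity $\nu_E = (-1)^{\dim Q}\nu_{I^\bullet}\nu_Q$ (provided by the announced result of Behrend-Getzler~\cite{BG}) together with the integration map out of the motivic Hall algebra converts the categorical wall-crossing into a product formula of the shape $\DT_\beta(X) = \PT_\beta(X)\cdot \DT_0(X)$, which after dividing by $\DT_0(X)$ as in~(\ref{reduced}) yields exactly the claimed $\DT'_\beta(X) = \PT_\beta(X)$.

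The main obstacle I expect is step (a)/(b): verifying that the extremal stability chambers on $\aA_p$ really cut out $\Hilb_n(X,\beta)$ and $P_n(X,\beta)$ \emph{scheme-theoretically} and not merely set-theoretically, so that the weighted Euler characteristic identity is exact. In particular one must rule out pathological destabilizing subobjects of $I^\bullet[1]$ in $\aA_p$ that do not come from zero-dimensional sheaves, and show that the wall-crossing summation converges. The reduction of the Behrend function identity to the Joyce-Song result~\cite[Theorem~5.2]{JS}, combined with the integration-map formalism of~\cite{JS,Tcurve1}, is the technical heart; the rest is the bookkeeping of Hall-algebra identities that produce the multiplicative factor $\DT_0(X)$ from the zero-dimensional walls.
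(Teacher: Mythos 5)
Your proposal follows essentially the same route as the paper: the paper deduces $\DT_{\beta}'(X)=\PT_{\beta}(X)$ from the wall-crossing identity (\ref{product2}) of Theorem~\ref{main:thm} --- itself established by exactly the Joyce--Song/Behrend--Getzler wall-crossing in the D0-D2-D6 category $\aA_X=\langle \oO_X, \Coh_{\le 1}(X)[-1]\rangle_{\ex}$ that you sketch --- and then identifies the zero-dimensional prefactor with $\DT_0(X)$ by looking at the $\beta=0$ part, where $\PT_0(X)=1$. One inconsistency to repair in your write-up: at the DT/PT wall the destabilizing quotients $Q$ must be zero-dimensional sheaves (as you correctly state at first), not sheaves ``supported on one-dimensional components of the PT curve''; only then is the total wall-crossing factor a series in $q$ alone, so that it can be identified with $\DT_0(X)$ and cancelled in the reduced series (\ref{reduced}).
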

Note that
we have already observed the formula (\ref{form:DTPT}) 
in Example~\ref{ex:DTPT} when the curve class is a 
class of a super
rigid rational curve. 

Similarly to Conjecture~\ref{conj:GWDT} (i), 
the formula (\ref{form:DTPT}) is also 
a consequence of the wall-crossing formula. 
A rough idea is as follows. Suppose that there is 
an abelian subcategory $\aA$ in $D^b \Coh(X)$ and a 
stability condition $\sigma$ on it,
 such that the ideal sheaf 
$I_Z$ for a 1-dimensional subscheme $Z\subset X$ is 
a $\sigma$-stable object in $\aA$. 
If there is a 0-dimensional subsheaf 
$Q \subset \oO_Z$, i.e. $\oO_X \to \oO_Z$ is 
not a stable pair, then there is a sequence, 
\begin{align}\label{QII}
Q[-1] \to I_Z \to I_{Z'},
\end{align}
where $Z'$ is a 1-dimensional subscheme in $Z$ defined by 
$\oO_{Z'}=\oO_Z/Q$. 
Suppose that the sequence (\ref{QII}) is an exact sequence in $\aA$. 
Then we expect that we can deform 
a stability condition $\sigma$ to another 
stability condition $\tau$ such that the sequence (\ref{QII})
destabilizes $I_Z$ w.r.t. $\tau$. Instead if we take
an exact sequence in $\aA$,  
\begin{align*}
I_{Z'} \to E \to Q[-1], 
\end{align*}
then the object $E$ may be $\tau$-stable. 
Such an object $E$ is isomorphic to a two term complex, 
\begin{align*}
E \cong (\oO_X \stackrel{s}{\to} F), 
\end{align*}
for a one dimensional sheaf $F$, 
and one may expect that $(F, s)$
is a stable pair. 
If the above story is correct, then 
 $\sigma$ corresponds to the
DT theory, $\tau$ corresponds to the PT theory, and the relationship between 
these theories should be described by the wall-crossing formula.

\subsection{Product formula of the generating series}
\label{subsec:product}
In this subsection, 
 we discuss the 
result
obtained by applying the wall-crossing formula. 
\begin{thm}
\emph{\bf{\cite{Tolim2}, \cite{Tcurve1}, \cite{BrH}}}\label{main:thm}
For each $n\in \mathbb{Z}$ and $\beta \in H_2(X, \mathbb{Z})$, 
there are invariants, 
\begin{align*}
N_{n, \beta} \in \mathbb{Q}, \quad L_{n, \beta} \in \mathbb{Q}, 
\end{align*}
satisfying that 
\begin{itemize}
\item there is $d\in \mathbb{Z}_{>0}$ such that 
$N_{n, \beta}=N_{n', \beta}$ if $n\pm n' \in d \mathbb{Z}$ 
and $\beta \neq 0$. 
\item $L_{n, \beta}=L_{-n, \beta}$, and 
$L_{n, \beta}=0$ for $\lvert n \rvert \gg 0$,
\end{itemize}
such that we have the following infinite 
product expansion formula, 
\begin{align} \label{product1}
\PT(X)&=\prod_{n>0, \beta >0}
\exp\left((-1)^{n-1} nN_{n, \beta}q^n t^{\beta}\right)
\left(\sum_{n, \beta}L_{n, \beta}q^n t^{\beta} \right), \\
\label{product2}
\DT(X)&= \prod_{n>0}
\exp\left((-1)^{n-1}n N_{n, 0}q^n  \right) \PT(X). 
\end{align}
\end{thm}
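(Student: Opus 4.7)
The plan is to deduce both product formulas from wall-crossing in a one-parameter family of weak stability conditions on a suitable triangulated subcategory $\dD_X \subset D^b \Coh(X)$, following the framework developed in~\cite{Tcurve1}. Take $\dD_X$ to be generated by $\oO_X$ and $\Coh_{\le 1}(X)[-1]$; its numerical Grothendieck group carries a natural filtration $\Gamma_0 \subset \Gamma_1 = \Gamma$ (rank-zero part, then the rank-one extension), so weak stability conditions in the sense of Definition~\ref{def:wstab} apply. PT stable complexes $(\oO_X \to F)$ and ideal sheaves $I_Z$ both lie, up to suitable shift, in a heart $\aA_X$ of a bounded t-structure on $\dD_X$, while $\Coh_{\le 1}(X)[-1] \subset \aA_X$ houses the 1-dimensional sheaves that trigger walls.

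First I would construct a family $\sigma_t \in \Stab_{\Gamma_\bullet}(\dD_X)$ interpolating between PT stability at one end and a \emph{limit chamber} at the other, chosen so that rank-one $\sigma_t$-stable objects in the limit chamber of class $(n,\beta)$ are counted by a small set of invariants $L_{n,\beta}$ — finite in $n$ for each $\beta$ and symmetric under $n \leftrightarrow -n$ by the dualising symmetry coming from the Calabi-Yau assumption and tensoring with line bundles. Analogously, DT stability sits in the chamber where rank-one stable objects are ideal sheaves $I_Z$, on the opposite side of the walls from PT. A direct analysis of destabilising sequences shows that the walls between PT stability and the limit chamber are indexed by classes $(n,\beta)$ with $n,\beta>0$, each coming from a short exact sequence $F[-1] \to I^{\bullet} \to I'^{\bullet}$ in $\aA_X$ with $F \in \Coh_{\le 1}(X)$ of class $(n,\beta)$, and that the additional walls between PT and DT stability are those with $\beta=0$ (insertion of $0$-dimensional subsheaves, as in Subsection~\ref{subsec:PT}).

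Next I would apply the Joyce-Song / Kontsevich-Soibelman wall-crossing formula in the motivic Hall algebra of $\aA_X$, descended to numerical invariants through the integration map of~\cite{JS} extended as in the announced work of Behrend-Getzler~\cite{BG}. The Euler pairing yields $\chi(\oO_X, F) = n$, and the Behrend-function identity together with the Calabi-Yau Serre duality forces the wall-crossing term across a single wall of class $(n,\beta)$ to be multiplication of the generating series by $\exp\bigl((-1)^{n-1} n N_{n,\beta} q^n t^{\beta}\bigr)$, where $N_{n,\beta}$ is the generalised DT invariant of $\omega$-Gieseker semistable $1$-dimensional sheaves with $(\chi,[\cdot]) = (n,\beta)$ built from the stability of Example~\ref{exam:stab}~(iii); the factor $n$ arises from the dimension of the $\Hom(\oO_X, F)$-space contribution, and $(-1)^{n-1}$ from the sign in the Behrend identity. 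Stacking all such walls between the limit chamber and PT stability produces (\ref{product1}); adding the $\beta=0$ walls separating PT from DT stability produces (\ref{product2}), with the period $d$ in the $N_{n,\beta}$ coming from the action of $\otimes \oO_X(H)$ on $\aA_X$ which shifts $n$ by a fixed multiple of $H\cdot\beta$ while preserving semistability.

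The main obstacle is the rigorous passage from the identity in the motivic Hall algebra to a numerical identity: this demands a Lie algebra homomorphism out of a suitable subalgebra of the Hall algebra that respects Behrend functions on mixed rank-one/rank-zero classes, which is precisely the content announced by Behrend-Getzler~\cite{BG}. A secondary but delicate point is to check that the walls are locally finite in $\Stab_{\Gamma_\bullet}(\dD_X)$ and that the limit chamber supports only the ``polynomial-like'' generating series $\sum L_{n,\beta} q^n t^{\beta}$ with $L_{n,\beta}=L_{-n,\beta}$ and $L_{n,\beta}=0$ for $|n|\gg 0$; this is arranged by choosing the degeneration of weak stability so that objects with $|n|$ large are destabilised by either $\oO_X$ or a point-sheaf, reducing the limit-chamber count to a finite sum for each $\beta$.
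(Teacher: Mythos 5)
Your proposal follows essentially the same route as the paper: the same category $\aA_X = \langle \oO_X, \Coh_{\le 1}(X)[-1]\rangle_{\ex}$ of D0-D2-D6 bound states with the two-step filtration $\Gamma_0\subset\Gamma_1=\Gamma$, the same one-parameter family of weak stability conditions with PT stability at one end and a self-dual limit chamber (yielding the $L_{n,\beta}$ with $L_{n,\beta}=L_{-n,\beta}$ and vanishing for $\lvert n\rvert\gg 0$) at the other, and the same application of the Joyce-Song wall-crossing formula, conditional on the announced Behrend-Getzler result, with wall factors $\exp\left((-1)^{n-1}nN_{n,\beta}q^nt^{\beta}\right)$. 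The only divergence is one of scope rather than method: the paper's Section~\ref{rational} derives only (\ref{product1}) this way and defers (\ref{product2}) and the periodicity of $N_{n,\beta}$ to the cited references, whereas you additionally sketch the line-bundle-twist and $\beta=0$-wall arguments for those parts.
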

We will explain how to deduce the formula (\ref{product1}) 
via wall-crossing in Section~\ref{rational}. 
\begin{rmk}\label{rmk:BG}
More precisely, the results in~\cite{Tolim2}, \cite{Tcurve1}
are Euler characteristic versions of the corresponding 
results, i.e. take the (non-weighted) Euler characteristic
in defining the invariants $I_{n, \beta}$, $P_{n, \beta}$. 
As discussed in the arXiv version of~\cite[Theorem~8.11]{Tcurve1}, 
the formulas (\ref{product1}), (\ref{product2})
can be proved by combining the work of Joyce-Song~\cite{JS}
and Behrend-Getzler's announced result~\cite{BG}. 
The latter result is the derived category version 
of~\cite[Theorem~5.3]{JS}, 
that is the moduli stack of certain objects in the 
derived category is locally written as a 
critical locus of some holomorphic function up
to gauge action. The precise statement is 
formulated in~\cite[Conjecture~4.3]{Tcurve2}. 
On the other hand, in~\cite{BrH}, Bridgeland 
proves Theorem~\ref{main:thm} without relying~\cite{BG}, 
using the arguments different to ours. 
\end{rmk}
The invariants $N_{n, \beta}$ and $L_{n, \beta}$
are also interpreted as counting invariants of 
certain objects in the derived category. 
Roughly speaking: 
\begin{itemize}
\item 
Let $\omega$ be an $\mathbb{R}$-ample divisor 
and $Z_{\omega}$ the stability condition 
on $\Coh_{\le 1}(X)$
constructed in Example~\ref{exam:stab} (iii). 
In the notation of Example~\ref{exam:stab} (iii), 
the invariant $N_{n, \beta}$ counts 
$Z_{\omega}$-semistable objects $E \in \Coh_{\le 1}(X)$, 
satisfying 
\begin{align*}
\cl_0(E)=(n, \beta) \in \Gamma_0. 
\end{align*}
\item The invariant $L_{n, \beta}$
counts certain semistable objects in the derived category
$E\in D^b \Coh(X)$,
satisfying 
\begin{align*}
\ch(E) &=(1, 0, -\beta, -n)  \\
&\in H^0(X, \mathbb{Z}) \oplus H^2(X, \mathbb{Z})
 \oplus H^4(X, \mathbb{Z}) \oplus H^6(X, \mathbb{Z}). 
\end{align*}

The relevant stability condition is self dual w.r.t. the 
derived dual. 
\end{itemize}
In order to define $N_{n, \beta}$, 
 we need to choose an $\mathbb{R}$-ample divisor
 $\omega$, but it can be 
shown that $N_{n, \beta}$ does not 
depend on $\omega$. 
(cf.~Lemma~\ref{Nomega}.)
The self duality 
in defining $L_{n, \beta}$ means that, 
if $E$ is (semi)stable, then its derived 
dual 
\begin{align*}
\dR \hH om(E, \oO_X) \in D^b \Coh(X), 
\end{align*} 
is also (semi)stable. 
The equality $L_{n, \beta}=L_{-n, \beta}$
is a consequence of the self duality. 

In some cases, 
the invariants $N_{n, \beta}$ and $L_{n, \beta}$
are defined in a similar way to DT or PT invariants. 
Let us take $n \in \mathbb{Z}$, $\beta \in H_2(X, \mathbb{Z})$
and an ample $\mathbb{R}$-divisor $\omega$
on $X$. 
Let $M_{n, \beta}(\omega)$ be the moduli space of 
$Z_{\omega}$-semistable objects $E \in \Coh_{\le 1}(X)$
satisfying $\cl_0(E)=(n, \beta)$, in
the notation of Example~\ref{exam:stab} (iii). 
If 
 $n$ and $\beta$ are coprime
and $\omega$ is in a general position
of the ample cone, then 
any $Z_{\omega}$-semistable sheaf $E \in \Coh_{\le 1}(X)$ 
is $Z_{\omega}$-stable, and the moduli space
$M_{n, \beta}(\omega)$ is a projective scheme 
with a symmetric perfect obstruction theory. 
 The invariant $N_{n, \beta}(\omega)$
is defined by 
\begin{align*}
N_{n, \beta}(\omega)  &\cneq \int_{[M_{n, \beta}(\omega)]^{\rm{vir}}} 1 \\
&=\int_{M_{n, \beta}(\omega)}\nu d\chi.
\end{align*}
Here $\nu$ is the Behrend function on 
$M_{n, \beta}(\omega)$. 
We will see in Lemma~\ref{Nomega}
that $N_{n, \beta}(\omega)$
is independent of $\omega$, 
so we can write it as $N_{n, \beta}$. 

On the other hand if $n$ and $\beta$ are not
coprime, then 
$Z_{\omega}$-semistable sheaf may not be $Z_{\omega}$-stable, 
and 
there is no fine moduli space
$M_{n, \beta}(\omega)$
in this case. 
Instead we should work with the moduli stack of 
$Z_{\omega}$-semistable objects, 
denoted by $\mM_{n, \beta}(\omega)$.
The moduli stack $\mM_{n, \beta}(\omega)$
is known to be an Artin stack of finite type over $\mathbb{C}$. 
However 
it is not obvious how to define counting 
invariants via $\mM_{n, \beta}(\omega)$,
since at this moment there is no reasonable 
notion of 
 perfect obstruction theories nor 
virtual fundamental cycles on Artin stacks.
Also it is not obvious how to define the
 weighted Euler characteristic
of $\mM_{n, \beta}(\omega)$, weighted by 
the Behrend function. 
The only known way (at this moment) to do this is 
to introduce the `logarithm' of the moduli stack 
$\mM_{n, \beta}(\omega)$ in the Hall algebra and integrate it. 
We will discuss this 
construction in Section~\ref{sec:Hall}.

As a corollary of Theorem~\ref{main:thm}, we have 
the following result. 
\begin{cor}\emph{\bf{\cite{Tolim2}, \cite{Tcurve1}, \cite{BrH}}}
Conjecture~\ref{conj:GWDT} (i) and Conjecture~\ref{conj:DTPT} are true. 
\end{cor}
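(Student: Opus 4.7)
The plan is to deduce both assertions purely formally from the two product expansions in Theorem~\ref{main:thm}. Conjecture~\ref{conj:DTPT} should drop out from (\ref{product2}) by inspecting the $t^0$-coefficient, and then Conjecture~\ref{conj:GWDT}(i) should follow from (\ref{product1}) after rewriting the infinite product as the exponential of a sum of rational functions of $q$.

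I would first handle the DT/PT correspondence. The prefactor $\prod_{n>0} \exp((-1)^{n-1} n N_{n,0} q^n)$ in (\ref{product2}) depends only on $q$, not on $t$, so extracting the $t^0$-coefficient of both sides and using that $[t^0]\PT(X) = 1$ yields
\begin{align*}
\DT_0(X) = \prod_{n>0} \exp\bigl((-1)^{n-1} n N_{n,0} q^n\bigr).
\end{align*}
Dividing (\ref{product2}) through by this identity gives $\DT(X)/\DT_0(X) = \PT(X)$, i.e.\ $\DT'(X) = \PT(X)$; comparing $t^\beta$ coefficients yields Conjecture~\ref{conj:DTPT}. As a bonus, this reduces rationality and $q\leftrightarrow 1/q$ invariance of $\DT'_\beta(X)$ to the same statement for $\PT_\beta(X)$.

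For $\PT_\beta(X)$ the strategy is to rearrange (\ref{product1}) by pulling the product inside the exponential, writing
\begin{align*}
\PT(X) = \exp\!\left(\sum_{\beta' > 0} f_{\beta'}(q)\, t^{\beta'}\right) \cdot \sum_{\beta'' \ge 0} L_{\beta''}(q)\, t^{\beta''},
\end{align*}
where $f_{\beta'}(q) = \sum_{n>0} (-1)^{n-1} n N_{n,\beta'}\, q^n$ and $L_{\beta''}(q) = \sum_n L_{n,\beta''}\, q^n$. By the first bullet of Theorem~\ref{main:thm} the finiteness of nonzero $L_{n,\beta''}$ makes $L_{\beta''}(q)$ a Laurent polynomial, and $L_{n,\beta''}=L_{-n,\beta''}$ makes it invariant under $q \leftrightarrow 1/q$. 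For $f_{\beta'}(q)$, the periodicity $N_{n,\beta'}=N_{n+d,\beta'}$ (the $n-n'\in d\mathbb{Z}$ case) lets one sum the series by grouping $n$ into residue classes $\mod d$ and applying the closed form for $\sum_{k\ge 0}(r+kd) z^{r+kd}$, presenting $f_{\beta'}(q)$ as a rational function of $q$. Extracting the $t^\beta$-coefficient then gives a finite sum (since for projective $X$ there are only finitely many effective decompositions of $\beta$) of products of these rational functions with Laurent polynomials, so $\PT_\beta(X)$ is a rational function of $q$.

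The step I expect to be the main obstacle is the $q\leftrightarrow 1/q$ invariance of $f_{\beta'}(q)$, which requires the other half of the first bullet, namely the case $n+n'\in d\mathbb{Z}$. Taking $n'=-n$ yields the evenness $N_{n,\beta'}=N_{-n,\beta'}$, which combined with $d$-periodicity imposes the symmetry $a_r=a_{d-r}$ on the coefficients $a_r\cneq N_{r,\beta'}$. Using this symmetry together with the rational-function identity $\sum_{n>0} n z^n = \sum_{n>0} n z^{-n}=z/(1-z)^2$ and its arithmetic-progression refinement, one verifies $f_{\beta'}(q)=f_{\beta'}(1/q)$ as rational functions; care is needed because the two series converge in disjoint annuli and the equality is genuinely an equality of meromorphic continuations. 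Once both factors in the rearranged expression for $\PT_\beta(X)$ are individually $q\leftrightarrow 1/q$-invariant, the finite-sum expression for the $t^\beta$-coefficient inherits this symmetry, completing Conjecture~\ref{conj:GWDT}(i).
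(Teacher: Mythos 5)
Your proposal is correct and follows essentially the same route as the paper: the DT/PT correspondence is read off from the $t^0$-coefficient of (\ref{product2}) exactly as in the paper's proof, and the rationality and $q\leftrightarrow 1/q$ invariance of $\PT_\beta(X)$ (hence of $\DT'_\beta(X)$) are deduced from the periodicity and evenness of $N_{n,\beta}$ together with the symmetry and finiteness of $L_{n,\beta}$, which is precisely the content the paper delegates to \cite[Lemma~4.6]{Tolim2}. Your write-up merely makes explicit the residue-class summation and the $a_r=a_{d-r}$ symmetry that the cited lemma encapsulates.
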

\begin{proof}
The property of $N_{n, \beta}$ 
easily implies that the series
\begin{align}\label{rat:series}
\sum_{n>0}
(-1)^{n-1} nN_{n, \beta}q^n, 
\end{align}
is the Laurent expansion of a rational function of 
$q$, invariant under $q\leftrightarrow 1/q$. 
(cf.~\cite[Lemma~4.6]{Tolim2}.)
Then Conjecture~\ref{conj:GWDT} (i) follows from 
the rationality of (\ref{rat:series})
and the property of $L_{n, \beta}$.  

As for Conjecture~\ref{conj:DTPT}, 
the formula (\ref{product2}) in particular implies that 
\begin{align*}
\DT_0(X)=\prod_{n>0}
\exp\left((-1)^{n-1}n N_{n, 0}q^n  \right). 
\end{align*}
Hence the formula (\ref{form:DTPT}) follows. 
\end{proof}

\section{Hall algebras and generalized Donaldson-Thomas invariants}
\label{sec:Hall}
In Subsection~\ref{subsec:product}, we have discussed the invariants
$N_{n, \beta}$ and $L_{n, \beta}$,
which count certain objects the derived category $D^b \Coh(X)$. 
As we discussed there, 
the definition of these invariants is not obvious   
if there is a strictly semistable object. 
In this section, we 
introduce (stack theoretic) 
Hall algebra of coherent sheaves, and 
explain how to construct $N_{n, \beta}$ via 
that algebra.
 The construction is due to Joyce-Song~\cite{JS}, 
which is called \textit{generalized Donaldson-Thomas invariant}. 
(The invariant $L_{n, \beta}$ can be similarly 
constructed, 
and we will discuss it later in Section~\ref{rational}.)

\subsection{Grothendieck groups of varieties}
We recall the notion of Grothendieck groups of 
varieties. 
Let $S$ be a variety over $\mathbb{C}$. 
We define the group $K(\mathrm{Var}/S)$ to be 
the group generated by isomorphism classes 
of symbols 
\begin{align*}
[\rho \colon Y \to S],
\end{align*}
where
$\rho \colon Y \to S$ is an $S$-variety of finite 
type over $\mathbb{C}$, 
and two symbols $[\rho_i \colon Y_i \to S]$ for $i=1, 2$
are isomorphic if there is an isomorphism 
$Y_1 \stackrel{\sim}{\to} Y_2$ preserving the
morphisms $\rho_i$. 
The relation is generated by 
\begin{align*}
[\rho \colon Y \to S] \sim 
[\rho|_{V} \colon V \to S] +[\rho|_{U} \colon 
U \to S],
\end{align*}
where $V\subset Y$ is a closed subvariety 
and $U\cneq Y\setminus V$. 
If $S=\Spec \mathbb{C}$, we 
write $K(\mathrm{Var}/S)$ as $K(\mathrm{Var}/\mathbb{C})$
for simplicity. 

The structure of the group $K(\mathrm{Var}/\mathbb{C})$
is studied in~\cite{Bit}. This is generated by 
smooth projective varieties $[Y]$ with relation given by 
\begin{align}\label{relation}
[\widehat{Y}] -[E] \sim [Y] -[C], 
\end{align}
where $C\subset Y$ is a smooth subvariety, 
$\widehat{Y} \to Y$ is a blow-up at $C$
and $E \subset \widehat{Y}$ is the exceptional divisor. 

Several interesting invariants of varieties 
can be extended to invariants of
elements in $K(\mathrm{Var}/\mathbb{C})$, 
using the above description of the generators and relations.  
For instance for a smooth projective variety $Y$, 
its Poincar\'e polynomial is defined by 
\begin{align}\label{Poincare}
P_t(Y)=\sum_{i=0}^{2 \dim Y}
(-1)^i \dim H^i(Y, \mathbb{C})t^i. 
\end{align}
The polynomial $P_t(\ast)$ 
is compatible with respect to the relation (\ref{relation}), 
hence there is a map, 
\begin{align}\label{Pt}
P_t \colon K(\mathrm{Var}/\mathbb{C}) \to \mathbb{Z}[t], 
\end{align}
such that $P_t([Y])$ coincides with (\ref{Poincare})
if $Y$ is smooth and projective. 

\subsection{Grothendieck groups of stacks}
The notion of Grothendieck group of varieties can be 
generalized to that of Artin stacks. 
For the introduction to stack, the readers 
can consult~\cite{GL}.

Let $\sS$ be 
an Artin stack, locally of finite type over $\mathbb{C}$.  
We define the $\mathbb{Q}$-vector space
 $K(\mathrm{St}/\sS)$ to be generated by isomorphism 
classes of symbols
\begin{align*}
[\rho \colon \yY \to \sS],
\end{align*}
where 
$\yY$ is an Artin stack of finite type over $\mathbb{C}$, 
$\rho$ is a 1-morphism, and two symbols $[\rho_i \colon \yY_i \to \sS]$
for $i=1, 2$
are \textit{isomorphic} if there is a 1-isomorphism of stacks 
$f\colon \yY_1 \stackrel{\sim}{\to} \yY_2$ 
with a 2-isomorphism $\rho_2 \circ f \cong \rho_1$. 
For a technical reason, we assume that $\yY$ 
has affine geometric stabilizers, i.e. 
for any $\mathbb{C}$-valued point $y\in \yY(\mathbb{C})$, 
the automorphism group $\Aut(k(y))$ is an affine algebraic 
group. 
The relation is generated by 
\begin{align*}
[\rho \colon \yY \to \sS] \sim 
[\rho|_{\vV} \colon \vV \to \sS] +
[\rho|_{\uU} \colon \uU \to \sS], 
\end{align*}
where $\vV \subset \yY$ is a closed substack 
and $\uU \cneq \yY \setminus \vV$. 

Let $P_t$ be the map defined in Lemma~\ref{Pt}. 
The following result is proved in~\cite[Theorem~4.10]{Joy5}.
\begin{lem}\label{Pt}
There is a map 
\begin{align*}
P_t \colon K(\mathrm{St}/\sS) \to \mathbb{Q}(t), 
\end{align*}
such that we have 
\begin{align*}
P_t \left(
[\rho \colon [Y/\GL_m(\mathbb{C})] \to \sS] \right)
=\frac{P_t([Y])}{P_t([\GL_m(\mathbb{C})])}.
\end{align*}
Here $Y$ is a quasi-projective variety on which 
$\GL_m(\mathbb{C})$ acts. 
\end{lem}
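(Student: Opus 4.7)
The plan is to extend the morphism $P_t \colon K(\mathrm{Var}/\mathbb{C}) \to \mathbb{Z}[t]$ already defined on varieties to a morphism on the Grothendieck group of stacks, by means of a stratification argument. The essential input is that every finite type Artin stack over $\mathbb{C}$ with affine geometric stabilizers admits a stratification into locally closed substacks, each of which is a quotient stack of the form $[Y_i/\GL_{m_i}(\mathbb{C})]$ with $Y_i$ quasi-projective. This is Kresch's stratification theorem, and it is what allows the definition to be given on generators.

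More precisely, first I would show that for any $[\rho \colon \yY \to \sS] \in K(\mathrm{St}/\sS)$, the stack $\yY$ admits such a stratification $\yY = \bigsqcup_i \yY_i$ with $\yY_i \cong [Y_i/\GL_{m_i}(\mathbb{C})]$. I would then define
\begin{align*}
P_t([\rho \colon \yY \to \sS]) \cneq \sum_i \frac{P_t([Y_i])}{P_t([\GL_{m_i}(\mathbb{C})])} \in \mathbb{Q}(t),
\end{align*}
using the already-defined Poincar\'e polynomial on $K(\mathrm{Var}/\mathbb{C})$ in the numerator and denominator. Noting that $P_t(\GL_m(\mathbb{C})) = \prod_{k=0}^{m-1}(t^2 - t^{2k})$ (which is nonzero as an element of $\mathbb{Q}(t)$), the quotient makes sense.

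The main obstacle is proving well-definedness: the value should not depend on the choice of stratification. The key technical point is that $\GL_m$ is a special algebraic group in the sense of Grothendieck-Serre, meaning every principal $\GL_m$-bundle is Zariski-locally trivial. From this one deduces the multiplicative formula $P_t(P) = P_t(X) \cdot P_t(\GL_m(\mathbb{C}))$ for any principal $\GL_m$-bundle $P \to X$ between varieties, by stratifying $X$ so that the bundle trivializes on each stratum and using the multiplicativity of $P_t$ on products in $K(\mathrm{Var}/\mathbb{C})$. Applied to two presentations $[Y/\GL_m] \cong [Y'/\GL_{m'}]$, one considers the fiber product $Y \times_{[Y/\GL_m]} Y'$, which carries a commuting pair of principal bundle structures, and the multiplicativity forces the two ratios $P_t(Y)/P_t(\GL_m)$ and $P_t(Y')/P_t(\GL_{m'})$ to coincide.

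Finally, I would verify compatibility with the scissor relation defining $K(\mathrm{St}/\sS)$: for a closed substack $\vV \subset \yY$ with open complement $\uU$, one refines a stratification of $\yY$ compatibly with $\vV$ and $\uU$, and additivity of $P_t$ on $K(\mathrm{Var}/\mathbb{C})$ gives $P_t(\yY) = P_t(\vV) + P_t(\uU)$. The behavior on generators $[Y/\GL_m(\mathbb{C})]$ with $Y$ quasi-projective then holds by construction, completing the definition of the desired map $P_t \colon K(\mathrm{St}/\sS) \to \mathbb{Q}(t)$.
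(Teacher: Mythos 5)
Your proposal follows the same route as the paper: use Kresch's stratification result (valid thanks to the affine geometric stabilizer hypothesis) to write any class as a finite sum of quotient stacks $[Y_i/\GL_{m_i}(\mathbb{C})]$ with $Y_i$ quasi-projective, define $P_t$ as the sum of the ratios $P_t([Y_i])/P_t([\GL_{m_i}(\mathbb{C})])$, and use the specialness of $\GL_m$ to establish independence of the presentation --- which is exactly the content the paper defers to Joyce's Theorem~4.10. (One small slip that does not affect the argument: $P_t(\GL_m(\mathbb{C}))=\prod_{k=0}^{m-1}(t^{2m}-t^{2k})$, not $\prod_{k=0}^{m-1}(t^{2}-t^{2k})$.)
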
 
\begin{proof}
We sketch an outline of the proof. 
By the assumption that $\yY$ has affine geometric stabilizers, 
we can apply Kresch's result~\cite[Proposition~3.5.9]{Kresch}
to show that any element $u \in K(\mathrm{St}/\sS)$
is written as a finite sum 
\begin{align}\label{exp}
\sum_{i=1}^{k} [\rho_i \colon [Y_i/\GL_{m_i}(\mathbb{C})] \to \sS], 
\end{align}
where $Y_i$ is a quasi-projective variety
on which $\GL_{m_i}(\mathbb{C})$ acts. 
Then we set $P_t(u)$ to be
\begin{align*}
P_t(u)=\sum_{i=1}^{k} \frac{P_t([Y_i])}{P_t([\GL_{m_i}(\mathbb{C})])}.
\end{align*}
The proof given in~\cite[Theorem~4.10]{Joy5}
shows that $P_t(u)$ does not depend on the expression (\ref{exp}). 
\end{proof}
\begin{rmk}
More precisely it is proved in~\cite[Theorem~4.10]{Joy5}
that the map $P_t$ in Lemma~\ref{Pt}
satisfies 
\begin{align*}
P_t \left(
[\rho \colon [Y/G] \to \sS] \right)
=\frac{P_t([Y])}{P_t([G])}.
\end{align*}
Here $Y$ is a quasi-projective variety and 
$G$ is a special algebraic group acting on $Y$,  
where an algebraic group $G$ is called special 
if any principal $G$-bundle is Zariski-localy trivial. 
For instance $\GL_m(\mathbb{C})$, $(\mathbb{C}^{\ast})^k$
are special algebraic groups. 

On the other hand, 
the finite group $\mathbb{Z}/k\mathbb{Z}$ is not special
as $\mathbb{C}^{\ast} \ni z \mapsto z^k \in \mathbb{C}^{\ast}$
is not Zariski locally trivial. 
For instance, let us consider an element 
of the form $[ \rho \colon [\Spec \mathbb{C}/G] \to \sS]$
for $G =\mathbb{Z}/k\mathbb{Z}$.
Then we have 
\begin{align*}
[\Spec \mathbb{C}/G]
\cong [\mathbb{C}^{\ast}/\mathbb{C}^{\ast}],
\end{align*}
 where 
$\mathbb{C}^{\ast}$ acts on $\mathbb{C}^{\ast}$ 
by $g \cdot z=g^k z$. Therefore we have 
\begin{align*}
P_t([[\Spec \mathbb{C}/G] \stackrel{\rho}{\to} \sS])
&=\frac{P_t(\mathbb{C}^{\ast})}{P_t(\mathbb{C}^{\ast})} \\
&=1.
\end{align*}
\end{rmk}
We will need the notions of push-forward and pull-back 
for the groups $K(\mathrm{St}/\sS)$. 
Let $f \colon \sS_1 \to \sS_2$ be a morphism of stacks. 
Then we have the push-forward, 
\begin{align*}
f_{\ast} \colon K(\mathrm{St}/\sS_1) \to K(\mathrm{St}/\sS_2), 
\end{align*}
defined by 
\begin{align*}
f_{\ast}[\rho \colon \yY \to \sS_1]
=[f\circ \rho \colon \yY \to \sS_2].
\end{align*}
Moreover if $f$ is of finite type, then we have the pull-back, 
\begin{align*}
f^{\ast} \colon K(\mathrm{St}/\sS_2) \to K(\mathrm{St}/\sS_1), 
\end{align*}
defined by 
\begin{align*}
f^{\ast}[\rho \colon \yY \to \sS_2]
=[f^{\ast}\rho \colon 
\yY \times_{\sS_2}\sS_1 \to \sS_1].
\end{align*}

\subsection{Hall algebras of coherent sheaves}
For a smooth projective variety $X$ over $\mathbb{C}$, 
we denote by $\mM$ the moduli stack of 
coherent sheaves on $X$. 
Namely $\mM$ is a 2-functor, 
\begin{align}\label{Coh}
\mM \colon (\mathrm{Sch}/\mathbb{C}) \to 
(\mathrm{groupoid}), 
\end{align}
which sends a $\mathbb{C}$-scheme $S$ to the 
groupoid whose objects consist of
flat families of coherent sheaves over $S$, 
\begin{align*}
\eE \in \Coh(X\times S).
\end{align*}
It is well-known that $\mM$ is an 
Artin stack which is locally of finite 
type over $\mathbb{C}$. 
\begin{defi}
We define the $\mathbb{Q}$-vector space $H(X)$ 
to be 
\begin{align*}
H(X) \cneq K(\mathrm{St}/\mM). 
\end{align*}
\end{defi}
We introduce the $\ast$-product on the 
$\mathbb{Q}$-vector space $H(X)$. 
Let $\eE x$ be the 2-functor, 
\begin{align*}
\eE x \colon (\mathrm{Sch}/\mathbb{C}) \to (\mathrm{groupoid}), 
\end{align*}
which sends a $\mathbb{C}$-scheme $S$ to the 
groupoid whose objects consist of exact sequences
in $\Coh (X \times S)$, 
\begin{align}\label{ex:seq}
0 \to \eE_1 \to \eE_2 \to \eE_3 \to 0, 
\end{align}
such that each $\eE_i$ is flat over $S$. 
The stack $\eE x$ is also an Artin stack 
locally of finite type over $\mathbb{C}$. 
There are 1-morphisms, 
\begin{align*}
p_i \colon \eE x \to \mM, \ i=1, 2, 3, 
\end{align*}
which send an exact sequence (\ref{ex:seq})
to the object $\eE_i$. In particular we have the diagram, 
\begin{align*}
\xymatrix{
\eE x \ar[r]^{p_2} \ar[d]_{(p_1, p_3)} & \mM, \\
\mM \times \mM. &  
}
\end{align*}
Also we define the map 
\begin{align*}
\iota \colon 
H(X) \otimes H(X) \to K(\mathrm{St}/\mM \times \mM),
\end{align*}
as follows: 
\begin{align*}
\iota([\yY_1 \stackrel{\rho_1}{\to} \mM]\otimes 
[\yY_1 \stackrel{\rho_1}{\to} \mM])
=[\yY_1 \times \yY_2 \stackrel{\rho_1 \times \rho_2}{\to} \mM \times \mM].
\end{align*}
We define $\ast$-product on $H(X)$ to be
\begin{align}\label{product}
\ast =p_{2\ast}(p_1, p_3)^{\ast} \iota \colon 
H(X) \otimes H(X) \to H(X). 
\end{align}
The following result is proved in~\cite{Joy2}. 
\begin{thm}\emph{\bf{\cite[Theorem~5.2]{Joy2}}}
$(H(X), \ast)$ is an associative algebra with unit 
given by $\delta_0=[\Spec \mathbb{C} \stackrel{\rho}{\to} \mM]$.
Here $\rho(\cdot)=0 \in \Coh(X)$.  
\end{thm}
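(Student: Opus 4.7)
The plan is to reduce associativity to the standard fact that the stack of length three filtrations simultaneously parametrises both bracketings of a triple extension, and then invoke base change in the Grothendieck group of stacks. First I would introduce the 2-functor $\eE x^{(3)}$ sending a $\mathbb{C}$-scheme $S$ to the groupoid of filtrations
\begin{align*}
0 = \eE_0 \subset \eE_1 \subset \eE_2 \subset \eE_3
\end{align*}
in $\Coh(X \times S)$ such that every subquotient $\eE_i/\eE_{i-1}$ is flat over $S$. An argument parallel to that for $\eE x$ shows $\eE x^{(3)}$ is an Artin stack locally of finite type over $\mathbb{C}$ with affine geometric stabilisers, carrying three subquotient 1-morphisms $q_i \colon \eE x^{(3)} \to \mM$ sending a flag to $\eE_i/\eE_{i-1}$, and a total-object morphism $\pi \colon \eE x^{(3)} \to \mM$ sending it to $\eE_3$.

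For associativity, I would observe that such a length three flag with flat subquotients can be presented as a pair of short exact sequences in two natural ways: an inner extension $0 \to \eE_1 \to \eE_2 \to \eE_2/\eE_1 \to 0$ together with an outer extension $0 \to \eE_2 \to \eE_3 \to \eE_3/\eE_2 \to 0$, or an outer extension $0 \to \eE_1 \to \eE_3 \to \eE_3/\eE_1 \to 0$ together with an inner extension $0 \to \eE_2/\eE_1 \to \eE_3/\eE_1 \to \eE_3/\eE_2 \to 0$. This gives 1-isomorphisms
\begin{align*}
\eE x \times_{p_1, \mM, p_2} \eE x \;\simto\; \eE x^{(3)} \;\simto\; \eE x \times_{p_3, \mM, p_2} \eE x,
\end{align*}
and combining them with base change for the resulting cartesian diagrams, both $(u_1 \ast u_2) \ast u_3$ and $u_1 \ast (u_2 \ast u_3)$ are computed as the common class $\pi_{\ast}(q_1, q_2, q_3)^{\ast}(u_1 \boxtimes u_2 \boxtimes u_3)$ in $H(X)$, where $\boxtimes$ denotes the external product valued in $K(\mathrm{St}/\mM^{\times 3})$.

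For the unit, I would observe that the preimage $p_1^{-1}(\delta_0) \subset \eE x$ consists of short exact sequences of the form $0 \to 0 \to \eE \to \eE \to 0$ and is therefore canonically 1-isomorphic to $\mM$ via $p_2$; symmetrically $p_3^{-1}(\delta_0) \simto \mM$ via $p_2$. Base change through these cartesian squares then yields $\delta_0 \ast u = u = u \ast \delta_0$ for every $u \in H(X)$.

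The main technical obstacle is that $p_2$ and $\pi$ are not of finite type on the full moduli stack, so one must check that after pulling back classes of finite type the push-forwards $p_{2 \ast}$ and $\pi_{\ast}$ are well defined on $K(\mathrm{St}/-)$, and that the cartesian squares above satisfy the base change identity $f_{\ast} g^{\ast} = h^{\ast} k_{\ast}$ in this setting. This is the technical content of the stack-theoretic Hall algebra formalism developed by Joyce in the cited paper, to which I would appeal directly.
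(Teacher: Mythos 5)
Your proposal is correct and is essentially the argument of the cited reference \cite{Joy2} (the paper itself gives no proof, only the citation): associativity via the stack of length three flags identified with both iterated fibre products of $\eE x$, plus base change, and the unit via the locus $\eE_1=0$, is exactly the standard route. One small imprecision: in this formalism the push-forwards $p_{2\ast}$, $\pi_{\ast}$ are defined for arbitrary $1$-morphisms, and the finite-type hypothesis is what is needed for the pull-backs $(p_1,p_3)^{\ast}$ and $(q_1,q_2,q_3)^{\ast}$ to make sense, so the technical burden sits on those maps rather than on $p_2$ and $\pi$.
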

Let us look at the $\ast$-product for 
`delta-functions',
corresponding to objects $E_1, E_2 \in \Coh(X)$. Namely 
for an object $E\in \Coh(X)$, we set
\begin{align*}
\delta_{E}=[\rho_E \colon \Spec \mathbb{C} \to \mM], \quad 
\rho_E(\cdot)=E. 
\end{align*}
The $\ast$-product $\delta_{E_1} \ast \delta_{E_2}$
can be written as 
\begin{align}\label{ast}
\delta_{E_1} \ast \delta_{E_2}=
\left[\rho \colon \left[\frac{\Ext^{1}(E_2, E_1)}
{\Hom(E_2, E_1)}\right] \to \mM \right]. 
\end{align}
Here $\rho$ is a map sending an element 
$u\in \Ext^1(E_2, E_1)$ to the object $E_3 \in \Coh(X)$, 
which fits into the exact sequence, 
\begin{align}\label{E123}
0\to E_1 \to E_3 \to E_2 \to 0, 
\end{align}
with extension class $u$. The vector space $\Hom(E_2, E_1)$
acts on $\Ext^1(E_2, E_1)$ trivially. 
In fact the $\mathbb{C}$-valued points of
the fiber product
\begin{align}\label{fibre:prod}
(\mM\times \mM)\times_{(\rho_{E_1} \times \rho_{E_2})} \Spec \mathbb{C},
\end{align}
bijectively correspond to the exact sequences (\ref{E123}), 
hence elements in $\Ext^1(E_2, E_1)$. 
Given such an extension, the group of the automorphisms
of the stack (\ref{fibre:prod})
at the $\mathbb{C}$-valued point (\ref{E123}) is 
the kernel of the natural map, 
\begin{align*}
\Aut(0 \to E_1 \to E_3 \to E_2 \to 0) \to \Aut(E_1) \times \Aut(E_2), 
\end{align*}
which is isomorphic to $\Hom(E_2, E_1)$. 
Hence we have the description (\ref{ast}).

\subsection{Semistable one or zero dimensional sheaves}
In this subsection, we assume that $X$ is a smooth 
projective Calabi-Yau 3-fold over $\mathbb{C}$. 
Let $\omega$ be an $\mathbb{R}$-ample divisor 
on $X$. Recall that we constructed a
stability condition $Z_{\omega}$ on the subcategory 
\begin{align*}
\Coh_{\le 1}(X) \subset \Coh(X),
\end{align*}
in Example~\ref{exam:stab}.
Given an element $(n, \beta) \in \mathbb{Z} \oplus H_2(X, \mathbb{Z})$, 
we have the substack, 
\begin{align}\label{substack}
\mM_{n, \beta}(\omega) \subset \mM, 
\end{align}
which parameterizes  $Z_{\omega}$-semistable $E\in \Coh_{\le 1}(X)$
satisfying 
\begin{align}\label{chi=bn}
(\chi(E), [E])=(n, \beta).
\end{align}
The substack (\ref{substack})
is known to be an open substack of $\mM$, which is of finite type 
over $\mathbb{C}$. 
Furthermore suppose that
 $\beta$ and $n$ are coprime and $\omega$
is in a general position in the ample cone.
Then any $Z_{\omega}$-semistable 
object $E\in \Coh_{\le 1}(X)$
satisfying (\ref{chi=bn})
is $Z_{\omega}$-stable, and the stack
 $\mM_{n, \beta}(\omega)$
is a $\mathbb{C}^{\ast}$-gerbe over 
a projective scheme $M_{n, \beta}(\omega)$, i.e. 
\begin{align}\label{roshin}
\mM_{n, \beta}(\omega) \cong [M_{n, \beta}(\omega)/\mathbb{C}^{\ast}].
\end{align} 
Here $\mathbb{C}^{\ast}$-acts on $M_{n, \beta}(\omega)$
trivially. 
The substack (\ref{substack}) 
defines the element of $H(X)$, 
\begin{align*}
\delta_{n, \beta}(\omega) =[\mM_{n, \beta}(\omega) 
\hookrightarrow \mM] \in H(X). 
\end{align*}
Recall that we constructed a map,
\begin{align*}
P_t \colon H(X) \to \mathbb{Q}(t), 
\end{align*}
in Lemma~\ref{Pt}. 
Applying $P_t$ to $\delta_{n, \beta}(\omega)$, 
we obtain the element
\begin{align*}
P_t(\delta_{n, \beta}(\omega)) \in \mathbb{Q}(t),
\end{align*} 
which is interpreted as a `Poincar\'e polynomial'
of the moduli stack $\mM_{n, \beta}(\omega)$. 

Suppose that
 $\mM_{n, \beta}(\omega)$ is 
written as (\ref{roshin}).
Then 
we have 
\begin{align}\label{substitute}
(t^2 -1)P_t(\delta_{n, \beta}(\omega)) &=
P_t(\mathbb{C}^{\ast})P_t(\delta_{n, \beta}(\omega)) \\
\notag
&= P_t(M_{n, \beta}(\omega)).
\end{align}
Hence 
we can substitute $t=1$ 
to (\ref{substitute}) 
and obtain,
\begin{align}\label{limt}
\lim_{t\to 1}
(t^2 -1)P_t(\delta_{n, \beta}(\omega))=\chi(M_{n, \beta}(\omega)).
\end{align}
 However when $n$ and $\beta$ are not coprime, 
then $\mM_{n, \beta}(\omega)$ is not 
necessary written as (\ref{roshin}).
In this case, as the following example 
indicates, 
the rational function (\ref{substitute})
 may have 
a pole at $t=1$
so the limit (\ref{limt}) does not make sense. 
\begin{exam}\label{exam:doesnot}
Let $C \cong \mathbb{P}^1 \subset X$ be a 
super rigid rational curve
as in Example~\ref{GW:rigid}. 
Then we have 
\begin{align*}
\mM_{0, k[C]}(\omega)
\cong [\Spec \mathbb{C} /\GL_k(\mathbb{C})], 
\end{align*}
whose closed point correspond to $\oO_C(-1)^{\oplus k}$.
Therefore using~\cite[Lemma~4.6]{Joy5}, we have 
\begin{align*}
(t^2 -1)P_t(\delta_{0, k[C]}(\omega))
&=(t^2 -1) \frac{1}{P_t(\GL_k(\mathbb{C}))}  \\
&=\frac{t^{k^2 -k}}{t^2(t^4-1) \cdots (t^{2k}-1)},
\end{align*}
and the limit $t\to 1$ does not exist when $k \ge 2$. 
\end{exam}
 
Instead, we take the `logarithm' of 
$\delta_{n, \beta}(\omega)$ in $H(X)$. 
\begin{defi}
We define $\epsilon_{n, \beta}(\omega) \in H(X)$ 
to be 
\begin{align}\label{log}
\epsilon_{n, \beta}(\omega)
=
\sum_{\begin{subarray}{c}
l\ge 1, n_i \in \mathbb{Z}, \beta_i \in H_2(X, \mathbb{Z}), 
1\le i\le l, \\
n_1 +\cdots +n_l=n, 
\beta_1 +\cdots +\beta_l =\beta, \\
\arg Z_{\omega}(n_i, \beta_i)=\arg Z_{\omega}(n, \beta).
\end{subarray}}
\frac{(-1)^{l-1}}{l}
\delta_{n_1, \beta_1}(\omega) \ast 
\cdots \ast \delta_{n_l, \beta_l}(\omega). 
\end{align}
\end{defi}
Namely
for each ray $l \subset \mathbb{H}$,
if we set 
\begin{align*}
\delta_{l}(\omega) &=1+
\sum_{Z_{\omega}(n, \beta) \in l}
\delta_{n, \beta}(\omega), \\
\epsilon_{l}(\omega) &=
\sum_{Z_{\omega}(n, \beta)\in l}
\epsilon_{n, \beta}(\omega), 
\end{align*}
then 
we have 
\begin{align*}
\epsilon_{l}(\omega)
=\log \delta_{l}(\omega). 
\end{align*}
It is shown in~\cite[Section~6.2]{Joy5} that the
function $(t^2-1)P_t(\epsilon_{n, \beta}(\omega))$
has the limit $t\to 1$, hence 
we obtain the invariant, 
\begin{align*}
\widehat{N}_{n, \beta}(\omega)=
\lim_{t\to 1}(t^2-1)P_t(\epsilon_{n, \beta}(\omega)) \in \mathbb{Q}. 
\end{align*} 
The invariant $\widehat{N}_{n, \beta}(\omega)$
is interpreted as an `Euler characteristic'
of the moduli stack $\mM_{n, \beta}(\omega)$.

\subsection{Invariants $N_{n, \beta}$}
The invariant $\widehat{N}_{n, \beta}(\omega)$
is interpreted as an unweighted Euler characteristic
of $\mM_{n, \beta}(\omega)$, 
and we need to 
involve the 
Behrend function in 
order to construct DT type invariants. 
It is easy to 
extend the notion of the Behrend function to 
the locally constructible function on 
the Artin stack $\mM$, 
\begin{align*}
\nu_{\mM} \colon \mM \to \mathbb{Z}, 
\end{align*}
so that if $M \to \mM$ is any atlas
of relative dimension $d$, 
then $\nu_{\mM}=(-1)^{d} \nu_{M}$. 
(cf.~\cite[Proposition~4.4]{JS}.)
We define the map 
\begin{align}\label{map:nu}
\nu \cdot \colon H(X) \to H(X), 
\end{align}
by sending an 
element $[\rho \colon \yY \to \mM]$
to the element, 
\begin{align*}
\sum_{i \in \mathbb{Z}}i
[\rho|_{\yY_i} \colon \yY_i \to \mM], 
\end{align*}
where $\yY_i =(\nu_{\mM} \circ \rho)^{-1}(i)$. 
\begin{defi}
We define $N_{n, \beta}(\omega)$ to be
\begin{align*}
N_{n, \beta}(\omega)=
\lim_{t\to 1}(t^2-1)P_t(-\nu\cdot \epsilon_{n, \beta}(\omega)) \in \mathbb{Q}. 
\end{align*} 
\end{defi}
Again the existence of the limit $t\to 1$
is proved in~\cite[Section~6.2]{Joy5}. 
A priori, the invariant $N_{n, \beta}(\omega)$
is defined after we choose a
polarization $\omega$. However
we have the following: 
\begin{lem}\label{Nomega}
The invariant $N_{n, \beta}(\omega)$
does not depend on a choice of $\omega$. 
\end{lem}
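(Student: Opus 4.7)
The plan is to combine Joyce--Song's wall-crossing formula for generalized Donaldson--Thomas invariants with a vanishing of the Euler pairing on $\Coh_{\le 1}(X)$ forced by the Calabi--Yau condition. Since the ample cone is connected, it is enough to check that $N_{n,\beta}(\omega)$ is invariant under crossing a single wall. The walls for the class $(n,\beta)$ are cut out in the ample cone by equations $n_1(\omega \cdot \beta_2) = n_2(\omega \cdot \beta_1)$ arising from nontrivial decompositions $(n,\beta) = (n_1,\beta_1) + (n_2,\beta_2)$ with $\beta_i > 0$; boundedness of $\mM_{n,\beta}(\omega)$ together with positivity of $\omega \cdot \beta_i$ on the ample cone makes this collection locally finite, so one may restrict to comparing two chambers $\omega_\pm$ separated by a single wall.

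The key algebraic input is that for any $E, F \in \Coh_{\le 1}(X)$ the Euler form $\chi(E,F) \cneq \sum_i (-1)^i \dim \Ext^i(E,F)$ vanishes. By Hirzebruch--Riemann--Roch, $\chi(E,F) = \int_X \ch(E)^\vee \ch(F) \td_X$; the identities $\ch_0(E) = \ch_1(E) = 0$ (and similarly for $F$), combined with the Calabi--Yau condition $c_1(X) = 0$, force the integrand to lie in cohomological degree $\ge 8$ on the threefold and hence to integrate to zero. In particular the antisymmetrized Euler pairing $\bar{\chi}(\alpha,\alpha') \cneq \chi(\alpha,\alpha') - \chi(\alpha',\alpha)$ vanishes identically on the sublattice $\Gamma_0 \subset K(\Coh(X))$ associated to $\Coh_{\le 1}(X)$.

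With these ingredients, I would appeal to the Joyce--Song wall-crossing formula \cite[Thm.~5.18]{JS} for the Behrend-weighted generalized DT invariants, which expresses $N_{n,\beta}(\omega_+)$ as a universal combination of products of $N_{n_i,\beta_i}(\omega_-)$ over strict decompositions, with coefficients built from iterated evaluations of $\bar{\chi}$ on the constituent classes. Equivalently, the $N_{n,\beta}(\omega)$ assemble into a Lie algebra morphism from Joyce's semiclassical Hall Lie algebra---whose bracket is governed by $\bar{\chi}$---into the abelian Lie algebra $\mathbb{Q}$; the identical vanishing of $\bar{\chi}$ collapses every nontrivial term in the wall-crossing expansion, yielding $N_{n,\beta}(\omega_+) = N_{n,\beta}(\omega_-)$. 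The main technical point to verify is that the Behrend-weighted version of the formula (rather than its unweighted Euler-characteristic counterpart) retains the factorization through $\bar{\chi}$: this rests on the Behrend-function sign identities developed in \cite[\S 5]{JS}, which in our situation reduce all the sign factors $(-1)^{\bar{\chi}}$ to $+1$ and thereby trivialize the higher-order wall-crossing contributions.
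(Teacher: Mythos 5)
Your argument is correct: the paper's own proof of this lemma is just a one-line citation to \cite[Theorem~6.16]{JS}, and what you have written out --- local finiteness of the walls in the ample cone, the vanishing of the (antisymmetrized) Euler pairing $\bar{\chi}$ on classes of one- or zero-dimensional sheaves on a Calabi--Yau 3-fold via Hirzebruch--Riemann--Roch, and the resulting collapse of every higher-order term in the Behrend-weighted wall-crossing formula --- is precisely the argument behind that cited theorem. So you have taken the same route as the paper, merely unpacking the reference it relies on.
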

\begin{proof}
The result is proved in~\cite[Theorem~6.16]{JS}.
\end{proof}
In what follows, we set
\begin{align*}
N_{n, \beta} \cneq N_{n, \beta}(\omega), 
\end{align*}
for some ample divisor $\omega$ on $X$.

\begin{exam}
(i) 
Suppose that 
$n$ and $\beta$ are coprime, and 
$\omega$ is in a general position. 
Then 
$\mM_{n, \beta}(\omega)$ is 
written as (\ref{roshin}) for 
a projective scheme $M_{n, \beta}(\omega)$.
Let $\nu_{M}$ be the Behrend
 function on $M_{n, \beta}(\omega)$. 
Then we have 
\begin{align*}
\epsilon_{n, \beta}(\omega)=\delta_{n, \beta}(\omega), \
\nu_{\mM}|_{\mM_{n, \beta}(\omega)}=-\nu_{M},
\end{align*} hence 
we have 
\begin{align*}
N_{n, \beta}
&=\int_{M_{n, \beta}(\omega)}\nu_{M} d\chi, \\
&= \int_{[M_{n, \beta}(\omega)]^{\rm vir}}1. 
\end{align*}

(ii)
In the situation of Example~\ref{exam:doesnot},
we have 
\begin{align*}
\delta_{0, [C]}(\omega)
 =\left[\frac{\Spec \mathbb{C}}{\mathbb{C}^{\ast}}\right], \ 
\delta_{0, 2[C]}(\omega)
=\left[\frac{\Spec \mathbb{C}}{\GL_{2}(\mathbb{C})}\right].
\end{align*}
Therefore we have 
\begin{align*}
\epsilon_{0, 2[C]}(\omega)
&= \delta_{0, 2[C]}(\omega) -\frac{1}{2} \delta_{0, [C]}(\omega)
\ast \delta_{0, [C]}(\omega) \\
&=\left[\frac{\Spec \mathbb{C}}{\GL_2(\mathbb{C})}
\to \mM \right] -\frac{1}{2
}\left[\frac{\Spec \mathbb{C}}{\mathbb{C}^{\ast}}
\to \mM \right] 
\ast \left[\frac{\Spec \mathbb{C}}{\mathbb{C}^{\ast}}
\to \mM \right], \\
&=\left[\frac{\Spec \mathbb{C}}{\GL_2(\mathbb{C})}
\to \mM \right] -\frac{1}{2
}\left[\frac{\Spec \mathbb{C}}{\mathbb{A}^1 \rtimes
(\mathbb{C}^{\ast})^2}
\to \mM \right].
\end{align*} 
The Behrend function $\nu_{\mM}$ is $1$
on $\oO_C(-1)^{\oplus 2}$, hence we have 
\begin{align*}
& (t^2 -1)P_t(-\nu \cdot \epsilon_{0, 2[C]}(\omega)) \\
&= (t^2-1)\left\{-\frac{1}{t^2(t^2-1)(t^4-1)}
+\frac{1}{2 t^2(t^2-1)^2} \right\} \\
&=\frac{1}{2t^2(t^2 +1)}. 
\end{align*}
By taking the limit $t\to 1$, we obtain 
$N_{0, 2[C]}=1/4$. In  general, it can be proved that 
(cf.~\cite[Example~6.2]{JS})
\begin{align*}
N_{0, k[C]}=\frac{1}{k^2}. 
\end{align*}

(iii) Let us consider 
the case $\beta=0$. In this case, 
$\mM_{n, 0}(\omega)$ is a moduli 
stack of length $n$ zero dimensional sheaves. 
Explicitly $\mM_{n, 0}(\omega)$
is described as follows. 
Let $\mathrm{Quot}^{(n)}(\oO_X^{\oplus n})$
be the Grothendieck Quot scheme 
which parameterizes quotients
 \begin{align}\label{quot:s}
\oO_X ^{\oplus n} \twoheadrightarrow F,
\end{align}
with $F$ zero dimensional length $n$ sheaves. 
The group $\GL_n(\mathbb{C})$ acts on 
$\mathrm{Quot}^{(n)}(\oO_X^{\oplus n})$
via 
\begin{align*}
g \cdot (\oO_X^{\oplus n} \stackrel{s}{\twoheadrightarrow} F)  
=(\oO_X^{\oplus n} \stackrel{s\circ g}{\twoheadrightarrow} F), \quad
g\in \GL_n(\mathbb{C}). 
\end{align*} 
Let 
\begin{align*}
U^{(n)}\subset \mathrm{Quot}^{(n)}(\oO_X^{\oplus n}),
\end{align*}
be the open subscheme corresponding to 
quotients (\ref{quot:s})
such that the induced morphism 
$H^0(s) \colon \mathbb{C}^{\oplus n} \to H^0(F)$ is an isomorphism. 
The $\GL_n(\mathbb{C})$-action on $\mathrm{Quot}^{(n)}(\oO_X^{\oplus n})$
preserves $U^{(n)}$, and 
the moduli stack $\mM_{n, 0}(\omega)$ is written as 
\begin{align*}
\mM_{n, 0}(\omega) \cong [U^{(n)}/\GL_n(\mathbb{C})]. 
\end{align*}
In principle, it may be possible to 
calculate $N_{n, 0}$ using the above 
description of the moduli stack. 
(For instance, the computation in~\cite[Section~5]{Trk2}
is applied for $n=2$.)
However at this moment, 
a computation of $N_{n, 0}$
for $n\ge 3$
is not yet done along with this argument. 
Instead, we can compute $N_{n, 0}$
using the wall-crossing formula 
and the 
computation of $\DT_0(X)$
in Example~\ref{ex:DT} (i). 
The result is given in~\cite[Paragraph~6.3]{JS}, 
\cite[Paragraph~6.4]{K-S}, \cite[Remark~5.14]{Tcurve1}, 
\begin{align}\label{D0}
N_{n, 0}=-\chi(X) \sum_{k|n, \ k\ge 1} \frac{1}{k^2}
\end{align}
\end{exam}

\section{Wall-crossing in D0-D2-D6 bound states}\label{rational}
Let $X$ be a smooth projective Calabi-Yau 3-fold over $\mathbb{C}$. 
In this section, we explain how to deduce the 
product formula (\ref{product1})
by using the wall-crossing formula. 
In principle, the result is obtained by combining 
the arguments in~\cite{Tolim2}, 
Joyce-Song's wall-crossing formula~\cite{JS} 
and the announced result by Behrend-Getzler~\cite{BG}.  
However the arguments in~\cite{Tolim2}
are complicated, and we simplify 
the arguments by using the 
framework of~\cite{Tcurve1}.
\subsection{Category of D0-D2-D6 bound states}
We define the category $\aA_X$ as follows: 
\begin{align*}
\aA_X \cneq \langle \oO_X, \Coh_{\le 1}(X)[-1] \rangle_{\ex}. 
\end{align*}
In~\cite[Lemma~3.5]{Tcurve1}, it is proved that 
$\aA_X$ is the heart of a bounded t-structure
on $\dD_X$, 
\begin{align*}
\dD_X =\langle \oO_X, \Coh_{\le 1}(X) \rangle_{\tr} \subset
D^b \Coh(X),
\end{align*}
hence in particular $\aA_X$ is 
an abelian category. 
The triangulated category 
$\dD_X$ is called the 
category of \textit{D0-D2-D6 bound states}. 

 The heart $\aA_X$ has
properties which are 
required in discussing DT/PT correspondence
in Subsection~\ref{subsec:PT}. 
 For instance 
if we consider an ideal sheaf $I_Z$ for 
a subscheme $Z\subset X$ with $\dim Z \le 1$, we have 
the distinguished triangle, 
\begin{align}\label{OIO}
\oO_Z[-1] \to I_Z \to \oO_X.
\end{align}
Since $\oO_Z[-1]$ and $\oO_X$ 
are objects in $\aA_X$, it follows that 
$I_Z \in \aA_X$ and the sequence (\ref{OIO})
is an exact sequence in $\aA_X$. 
Also for a stable pair $(F, s)$, 
let $I^{\bullet}=(\oO_X \stackrel{s}{\to} F)$
be the associated two term complex 
with $\oO_X$ located in degree zero and 
$F$ in degree one. Then $I^{\bullet}$ fits into 
the distinguished triangle, 
\begin{align}\label{FIO}
F[-1] \to I^{\bullet} \to \oO_X.
\end{align}
By the same argument as above, 
we have $I^{\bullet}\in \aA_X$
and the sequence (\ref{FIO}) is an exact sequence in $\aA_X$. 
As the above argument indicates, 
the heart $\aA_X$ is 
expected to be an important category 
in studying curve counting invariants on Calabi-Yau 
3-folds. 
\subsection{Comparison with perverse coherent sheaves}
In~\cite{Bay}, \cite{Tolim},
the notions of \textit{polynomial stability}
and \textit{limit stability} are 
introduced on the following category of 
\textit{perverse coherent sheaves},
\begin{align*}
\aA^p \cneq \langle \Coh_{\ge 2}(X)[1], \Coh_{\le 1}(X) \rangle_{\ex}.
\end{align*}
Here $\Coh_{\ge 2}(X)$ is the right orthogonal complement of $\Coh_{\le 1}(X)$
in $\Coh(X)$. 
In this subsection, 
we compare $\aA_X$ with $\aA^p$. 

Obviously we have 
\begin{align*}
\aA_X \subset \aA^p[-1]. 
\end{align*}
By~\cite[Lemma~2.16]{Tolim}, 
there exists a torsion pair $(\aA_{1}^p, \aA_{1/2}^p)$
on $\aA^p$, defined by 
\begin{align*}
\aA_{1}^p &\cneq \langle F[1], \oO_x : 
F \mbox{ is pure two dimensional, }
x\in X \rangle_{\ex}, \\
\aA_{1/2}^p &\cneq \{ E\in \aA^p : \Hom(F, E)=0 \mbox{ for any }
F\in \aA_{1}^p \}. 
\end{align*}
Namely we have the following, (cf.~\cite{HRS},)
\begin{itemize}
\item For any $T \in \aA_{1}^p$
and $F\in \aA_{1/2}^p$, we have $\Hom(T, F)=0$. 
\item For any $E\in \aA^p$, there is an exact 
sequence 
\begin{align}\notag
0 \to T \to E \to F \to 0,
\end{align}
with $T\in \aA_{1}^p$ and $F\in \aA_{1/2}^p$. 
\end{itemize}
We set 
\begin{align}\label{AX1}
\aA_{X, 1} &\cneq \aA_{1}^p[-1] \cap \aA_X, \\
\notag
&= \langle \oO_x[-1] : x \in X \rangle_{\ex},
\end{align}
and 
\begin{align}\label{AX2}
\aA_{X, 1/2} &\cneq \aA_{1/2}^p [-1] \cap \aA_X \\
\notag
&=\{ E \in \aA_X : \Hom(\aA_{X, 1}, E)=0\}.
\end{align} 
It is easy to check that $(\aA_{X, 1}, \aA_{X, 1/2})$
is a torsion pair on $\aA_X$, 
using the fact that $\aA_X$ is noetherian. 
(cf.~\cite[Lemma~6.2]{Tcurve1}.)
We have the following lemma. 
\begin{lem}\label{lem:rank:c1}
For an object $E \in \aA_{1/2}^p[-1]$, suppose that 
\begin{align*}
\rank(E) \in \{0, 1\}, \ c_1(E)=0. 
\end{align*}
Then we have $E \in \aA_{X, 1/2}$.
\end{lem}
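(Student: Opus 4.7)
My plan is to control $E$ through its standard $t$-structure cohomology (forced on it by membership in $\aA^p[-1]$), use the rank and $c_1$ hypotheses to pin down $\hH^0(E)$ explicitly, and then place $E$ inside $\aA_X$ by extension closure of the heart.

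Since $\aA^p$ is the tilt of $\Coh(X)$ along the torsion pair $(\Coh_{\le 1}(X), \Coh_{\ge 2}(X))$, every $A \in \aA^p$ sits in a canonical triangle $T[1] \to A \to S$ with $T \in \Coh_{\ge 2}(X)$ and $S \in \Coh_{\le 1}(X)$. Applied to $E[1] \in \aA^p_{1/2} \subset \aA^p$ and shifted, this shows $\hH^i(E) = 0$ for $i \notin \{0,1\}$, with $T \cneq \hH^0(E) \in \Coh_{\ge 2}(X)$ and $S \cneq \hH^1(E) \in \Coh_{\le 1}(X)$. Since $\ch_0$ and $\ch_1$ vanish on $\Coh_{\le 1}(X)$, additivity of the Chern character on the triangle $T \to E \to S[-1]$ gives $\rank(T) = \rank(E) \in \{0,1\}$ and $c_1(T) = c_1(E) = 0$.

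I split on the rank. If $\rank(T) = 0$, then $T$ is torsion with no $\le 1$-dimensional subsheaf, hence pure of dimension two; its $c_1$ is the class of the support divisor with multiplicity, and since an effective nonzero divisor class pairs positively with any ample divisor it is nonzero in $H^2(X,\mathbb{Q})$, forcing $T = 0$ and $E = S[-1] \in \aA_X$ immediately. If $\rank(T) = 1$, then $T$ must be torsion-free (any torsion subsheaf would lie in $\Coh_{\le 1}(X)$, contradicting $T \in \Coh_{\ge 2}(X)$); the reflexive hull $T^{\vee\vee}$ is then a rank-one reflexive sheaf on the smooth threefold $X$, hence a line bundle $L$ with $c_1(L) = c_1(T) = 0$, and the Calabi-Yau hypothesis $H^1(X,\oO_X) = 0$ yields $L \cong \oO_X$. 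Thus $T = I_Z$ for a subscheme $Z \subset X$ with $\dim Z \le 1$, and the short exact sequence $0 \to I_Z \to \oO_X \to \oO_Z \to 0$ rotates to a triangle $\oO_Z[-1] \to T \to \oO_X$ exhibiting $T$ as an extension in $\aA_X$ of $\oO_X$ by $\oO_Z[-1]$.

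In either case $T \in \aA_X$. Combined with $S[-1] \in \Coh_{\le 1}(X)[-1] \subset \aA_X$ and the canonical triangle $T \to E \to S[-1]$, extension closure of the heart $\aA_X$ yields $E \in \aA_X$. Since $E \in \aA^p_{1/2}[-1]$ by hypothesis, this places $E$ in $\aA^p_{1/2}[-1] \cap \aA_X = \aA_{X,1/2}$, as desired. The main delicate point I anticipate is the rank-one step identifying $T^{\vee\vee}$ with $\oO_X$: one must invoke triviality (not merely discreteness) of $\Pic^0(X)$, a standard but tacit input of the Calabi-Yau setup that deserves to be flagged; the rank-zero step similarly uses that effective two-cycle classes survive in rational cohomology, which is automatic for projective $X$.
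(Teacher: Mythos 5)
Your overall strategy --- decompose $E$ by its standard cohomology sheaves via the tilting description of $\aA^p$, identify $T=\hH^0(E)$ using the rank and $c_1$ hypotheses, and conclude by extension-closedness of $\aA_X$ --- is sound, and it is essentially what the paper outsources to \cite[Lemma~3.2]{Tolim}; your treatment of the rank-zero case (which the paper omits) is also fine. However, there is a genuine gap in the rank-one case: the parenthetical justification for torsion-freeness of $T$ is false. A torsion subsheaf of a sheaf in $\Coh_{\ge 2}(X)$ need not lie in $\Coh_{\le 1}(X)$; it automatically lies in $\Coh_{\ge 2}(X)$ and hence, if nonzero, is a \emph{pure two-dimensional} sheaf, which $\Coh_{\ge 2}(X)$ happily contains. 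Concretely, for a smooth divisor $D\subset X$ the sheaf $T=\oO_D\oplus\oO_X(-D)$ lies in $\Coh_{\ge 2}(X)\subset\aA^p[-1]$, has $\rank=1$ and $c_1=0$, but is not torsion-free (and is not in $\aA_X$, since objects of $\aA_X$ receive no nonzero maps from torsion sheaves). So torsion-freeness cannot follow from the data you actually use at that point, namely $E\in\aA^p[-1]$ together with the rank and $c_1$ constraints.

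What rescues the step is the hypothesis $E\in\aA_{1/2}^p[-1]$, which you invoke only in your final sentence. For a pure two-dimensional sheaf $F$, the triangle $T\to E\to S[-1]$ gives $\Hom(F,T)\cong\Hom(F,E)$, and $\Hom(F,E)=\Hom(F[1],E[1])=0$ because $F[1]\in\aA_{1}^p$ and $E[1]\in\aA_{1/2}^p$. Hence $T$ admits no nonzero map from a pure two-dimensional sheaf; combined with $T\in\Coh_{\ge 2}(X)$ this kills every torsion subsheaf of $T$ (and, incidentally, disposes of the rank-zero case without any appeal to $c_1$). With this repair the rest of your rank-one argument --- the reflexive hull is a line bundle, trivialized by $c_1=0$ together with $H^1(X,\oO_X)=0$, so $T=I_Z$ with $\dim Z\le 1$ and $T\in\aA_X$ via the triangle $\oO_Z[-1]\to I_Z\to\oO_X$ --- goes through, and the conclusion $E\in\aA_{1/2}^p[-1]\cap\aA_X=\aA_{X,1/2}$ follows as you state.
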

\begin{proof}
We only prove the case of $\rank(E)=1$. 
Take $E \in \aA_{1/2}^p[-1]$ with 
$\rank(E)=1$ and $c_1(E)=0$. Then 
by~\cite[Lemma~3.2]{Tolim}, 
we have the exact sequence in $\aA^p[-1]$, 
\begin{align*}
I_C \to E \to F[-1],
\end{align*}
for some curve $C\subset X$ and $F \in \Coh_{\le 1}(X)$. 
Since $I_C, F[-1] \in \aA_X$, we have 
$E \in \aA_X$, hence $E \in \aA_{X, 1/2}$. 
\end{proof}
Below we use the following notation. 
For $E, F \in \aA_{1/2}^p$, a morphism 
$u \colon E \to F$ in $\aA^p$ is 
called a \textit{strict monomorphism} if 
$u$ is injective in $\aA^p$ and
$\Cok(u) \in \aA_{1/2}^p$. Similarly 
$u$ is called a \textit{strict epimorphism}
if $u$ is surjective in $\aA^p$ 
and $\Ker(u) \in \aA_{1/2}^p$. 
By replacing $(\aA_{i}^p, \aA^p)$
by $(\aA_{X, i}, \aA_X)$, we have 
the notions of strict monomorphism, 
strict epimorphism on $\aA_{X, i}$.

\subsection{Weak stability conditions on $\aA_X$}
In this subsection, we construct weak stability conditions on 
$\aA_X$. (cf.~Definition~\ref{subsec:Weak}.)
The finitely generated free abelian 
group $\Gamma$ is defined by 
\begin{align*}
\Gamma &\cneq \mathbb{Z} \oplus H_2(X, \mathbb{Z})
\oplus \mathbb{Z}, \\
 &=\Gamma_0 \oplus \mathbb{Z},
\end{align*} 
where $\Gamma_0$ is introduced in 
Example~\ref{exam:stab} (iii). 
Below we write an element in $\Gamma$
as $(n, \beta, r)$ for $n\in \mathbb{Z}$, 
$\beta \in H_2(X, \mathbb{Z})$
and $r\in \mathbb{Z}$.
For an object $E\in \aA_X$, note that  
 \begin{align}\label{chH2i}
\ch_i(E) \in H^{2i}(X, \mathbb{Z}), 
\end{align}
since (\ref{chH2i})
is true for the generating 
set of objects $\oO_X$ and $E \in \Coh_{\le 1}(X)[-1]$. 
Therefore the Chern characters define the group homomorphism, 
\begin{align*}
\cl \colon K(\aA_X) \to \Gamma, 
\end{align*}
given by 
\begin{align*}
\cl(E) =(\ch_3(E), \ch_2(E), \ch_0(E)).
\end{align*}
Here we have identified 
$H^0(X, \mathbb{Z})$ and $H^6(X, \mathbb{Z})$
with $\mathbb{Z}$, and $H^2(X, \mathbb{Z})$
with 
$H_2(X, \mathbb{Z})$
via Poincar\'e duality. 
We take the following 
2-step filtration in $\Gamma$, 
\begin{align*}
0=\Gamma_{-1}\subsetneq \Gamma_0 \subsetneq \Gamma_1 =\Gamma, 
\end{align*}
where $\Gamma_0$ is
given in Example~\ref{exam:stab}, 
and the embedding 
$\Gamma_0 \hookrightarrow \Gamma$
 is given by $(n, \beta) \mapsto (n, \beta, 0)$.  
Hence each subquotient is given by 
\begin{align*}
\Gamma_{0}/\Gamma_{-1} &= \mathbb{Z}
\oplus H_2(X, \mathbb{Z}), \\
\Gamma_{1}/\Gamma_{0} &=\mathbb{Z}. 
\end{align*}
Given a following data, 
\begin{align}\label{dot}
\omega \in H^2(X, \mathbb{Q}), \quad 
0<\theta <1, 
\end{align}
where $\omega$ is an ample class,
we construct
\begin{align}\label{Zoti}
Z_{\omega, \theta}=\{Z_{\omega, \theta, i}\}_{i=0}^{1} 
\in \prod_{i=0}^{1} \Hom(\Gamma_i/\Gamma_{i-1}, \mathbb{C}), 
\end{align}
as follows:
\begin{align*}
Z_{\omega, \theta, 0}(n, \beta) &=
n-(\omega \cdot \beta)\sqrt{-1},  
 \\
Z_{\omega, \theta, 1}(r) &=r \exp(i\pi \theta).
\end{align*}
Here $(n, \beta) \in \mathbb{Z} \oplus H_2(X, \mathbb{Z})$
and $r\in \mathbb{Z}$. 
We have the following lemma. 
\begin{lem}\label{lem:system}
The system of group homomorphisms (\ref{Zoti}) is a weak 
stability condition on $\aA_X$. 
\end{lem}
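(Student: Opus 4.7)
The plan is to verify the two axioms of Definition~\ref{def:wstab} in turn: condition (i), a direct computation by cases, and condition (ii), the existence of Harder--Narasimhan filtrations, which is the substantive point.

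For (i) I would argue by cases on $\ch_0(E)$. Since $\aA_X = \langle \oO_X, \Coh_{\le 1}(X)[-1]\rangle_{\ex}$, the rank $\ch_0$ is additive in exact sequences of $\aA_X$ and takes values in $\mathbb{Z}_{\ge 0}$. If $\ch_0(E) \ge 1$, then $\cl(E) \in \Gamma_1 \setminus \Gamma_0$ and $Z_{\omega,\theta,1}(\ch_0(E)) = \ch_0(E) \exp(i\pi\theta) \in \mathbb{H}$ because $\theta \in (0,1)$. If $\ch_0(E) = 0$, one first observes that then $E$ must lie in $\Coh_{\le 1}(X)[-1]$, since this subcategory is already extension-closed in $D^b \Coh(X)$. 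Writing $E = F[-1]$ for non-zero $F \in \Coh_{\le 1}(X)$, a short computation gives $Z_{\omega,\theta,0}(E) = -\chi(F) + (\omega \cdot [F])\sqrt{-1}$. The imaginary part is strictly positive when $F$ has one-dimensional support (since $\omega$ is ample), while when $F$ is zero-dimensional the imaginary part vanishes but the real part $-\chi(F)$ is strictly negative. Either way $Z_{\omega,\theta,0}(E) \in \mathbb{H}$.

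For (ii) I would invoke the HN existence criterion of~\cite[Prop.~2.4]{Brs1}, in the form adapted to weak stability conditions in~\cite[Section~2]{Tcurve1}: it suffices to establish noetherianity of $\aA_X$ together with a descending-chain condition on slopes. Noetherianity reduces, via the presentation of any $E \in \aA_X$ as an extension $0 \to F[-1] \to E \to \oO_X^{\oplus r} \to 0$ with $F \in \Coh_{\le 1}(X)$, to noetherianity of $\Coh_{\le 1}(X)$ together with a boundedness argument on the rank $r$. The chain condition then follows because the image of $Z_{\omega,\theta,0}$ on classes bounded by $\cl(E)$ is a discrete subset of $\mathbb{C}$ (using $\omega \in H^2(X,\mathbb{Q})$), while every rank-positive object shares the single slope $\pi\theta$, so only finitely many distinct arguments can appear in any candidate filtration of $E$.

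Equivalently, and more constructively, I would build the HN filtration in two stages. First extract the maximal rank-$0$ subobject $T \hookrightarrow E$ (which exists by noetherianity, since $\Coh_{\le 1}(X)[-1]$ is closed under subobjects and extensions in $\aA_X$), yielding $0 \to T \to E \to E/T \to 0$ with $T \in \Coh_{\le 1}(X)[-1]$ and $E/T$ of constant slope $\pi\theta$. Then refine $T$ by the HN filtration of $\Coh_{\le 1}(X)$ with respect to $Z_\omega$ from Example~\ref{exam:stab}(iii), shifted by $[-1]$. The main technical obstacle is the interleaving step: the rank-positive block $E/T$ must be placed at the correct position in the decreasing-slope filtration of $T$, separating rank-$0$ HN factors with argument $>\pi\theta$ (which sit above $E/T$) from those with argument $<\pi\theta$ (which sit below). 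Justifying this rearrangement --- specifically, checking that the relevant $\Ext^1$ groups in $\aA_X$ permit the filtration to be reassembled in the correct order --- is the bookkeeping step that requires the most care, and is precisely where either the abstract Bridgeland criterion or an explicit torsion-pair argument must be brought to bear.
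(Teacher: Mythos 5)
Your verification of axiom (i) is sound and matches the paper's: additivity of $\ch_0$ forces an object with $\ch_0(E)=0$ to lie in $\Coh_{\le 1}(X)[-1]$ (that subcategory being extension closed), and then $Z_{\omega,\theta}(E)=Z_{\omega}(E[1])\in\mathbb{H}$ by the computation of Example~\ref{exam:stab}~(iii). The gap is in axiom (ii), and you have located it yourself but underestimated it: the ``interleaving step'' is not bookkeeping, it is the entire content of the Harder--Narasimhan property here, and the two-step filtration you propose cannot in general be rearranged into an HN filtration. If $T\subset E$ is the maximal subobject lying in $\Coh_{\le 1}(X)[-1]$, then the HN factors of $T$ whose argument is less than $\pi\theta$ would have to appear \emph{below} $E/T$, i.e.\ as quotients of quotients of $E$; but they are only subquotients of the subobject $T$, and for a nontrivial extension they do not detach. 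Concretely, for $E=I_Z$ with $Z$ pure one-dimensional one has $T=\oO_Z[-1]$ and $E/T=\oO_X$, and for suitable $\theta$ the object $I_Z$ is itself $Z_{\omega,\theta}$-semistable, so its HN filtration is trivial and its unique HN factor is not a subquotient of your coarse filtration at all: the semistable pieces must be recomputed, not reordered. Your fallback to an abstract ``noetherian plus chain condition'' criterion also does not come for free, because for a \emph{weak} stability condition the see-saw property fails (the degenerate cases (\ref{ZFEG2}), (\ref{ZFEG3}) occur), so Bridgeland's Proposition~2.4 does not apply verbatim.

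The paper's resolution is to choose a different torsion pair. The torsion part is not $\Coh_{\le 1}(X)[-1]\cap\aA_X$ but the much smaller $\aA_{X,1}=\langle \oO_x[-1] : x\in X\rangle_{\ex}$ of (\ref{AX1}), consisting of shifted zero-dimensional sheaves. These are precisely the objects of maximal argument $\pi$, since $Z_{\omega,\theta}(Q[-1])=-\chi(Q)<0$ for $Q$ zero-dimensional; hence the torsion subobject $T$ genuinely sits at the top of the HN filtration and no rearrangement is ever needed for it. All the remaining difficulty is pushed into the torsion-free part $\aA_{X,1/2}$, which by~\cite[Lemma~2.19]{Tolim} is a finite-length quasi-abelian category (noetherian and artinian with respect to strict monomorphisms and epimorphisms); combined with the characterization of $Z_{\omega,\theta}$-semistable objects of $\aA_{X,1/2}$ by the inequality (\ref{ZAB}) ranging only over exact sequences with both terms in $\aA_{X,1/2}$ (the analogue of~\cite[Lemma~2.27]{Tolim}), the maximal-destabilizer argument of~\cite[Theorem~2.29]{Tolim} then produces the HN filtration there. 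So the missing idea is the correct identification of the torsion class as the shifted zero-dimensional sheaves together with the finite-length property of its orthogonal complement; without it, neither of your two routes closes.
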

\begin{proof}
For an object $E\in \aA_X$, let 
us take $i\in \{0, 1\}$ 
so that $\cl(E) \in \Gamma_{i} \setminus \Gamma_{i-1}$
is satisfied.
If $i=1$, then 
\begin{align*}
Z_{\omega, \theta}(E) \in \mathbb{R}_{>0} 
\exp (i\pi \theta) \subset \mathbb{H}.
\end{align*}
Also if $i=0$, then $E \in \Coh_{\le 1}(X)[-1]$ and 
\begin{align*}
Z_{\omega, \theta}(E)=Z_{\omega}(E[1]) \in \mathbb{H}, 
\end{align*}
where $Z_{\omega}$ is 
defined in Example~\ref{exam:stab} (iii). 
Therefore the condition (i) in Definition~\ref{def:wstab}
is satisfied. 

We check the condition (ii) in Definition~\ref{def:wstab}. 
Let  
$(\aA_{X, 1}, \aA_{X, 1/2})$ be the torsion 
pair of $\aA_X$, given by (\ref{AX1}), (\ref{AX2}). 
For any $E\in \aA_X$, there is an exact sequence 
in $\aA_X$, 
\begin{align}\label{TF2}
0 \to T \to E \to F \to 0,
\end{align}
with $T \in \aA_{X, 1}$ and $F \in \aA_{X, 1/2}$. 
By~\cite[Lemma~2.19]{Tolim}, 
the categories $\aA_{X, 1}$ and $\aA_{X, 1/2}$
are
finite length (i.e. noetherian and 
artinian with respect to strict epimorphism and 
strict monomorphism)
quasi-abelian categories. 
(See~\cite[Section~4]{Brs1} for the definition
 of 
quasi-abelian categories.)

On the other hand, 
by the same argument of~\cite[Lemma~2.27]{Tolim},
 an object $E\in \aA_X$ is 
$Z_{\omega, \theta}$-semistable if 
and only if 
one of the following conditions holds: 
\begin{itemize}
\item We have $E\in \aA_{X, 1}$. 
\item We have $E\in \aA_{X, 1/2}$, and for any 
exact sequence
\begin{align*}
0 \to A \to E \to B \to 0
\end{align*}
in $\aA_X$
with $A, B \in \aA_{X, 1/2}$, we have 
\begin{align}\label{ZAB}
\arg Z_{\omega, \theta}(A) \le \arg Z_{\omega, \theta}(B).
\end{align} 
\end{itemize}
Then for any $E \in \aA_{X}$, 
its Harder-Narasimhan filtration 
is obtained 
by combining 
the sequence (\ref{TF2}) and the 
Harder-Narasimhan filtration of $F$, 
where $F$ is given by the sequence 
(\ref{TF2}). 
The existence of the latter filtration 
is 
ensured by the fact that 
$Z_{\omega, \theta}$-semistable objects in 
$\aA_{X, 1/2}$ are 
characterized by the inequality (\ref{ZAB})
for exact sequences in $\aA_{X, 1/2}$, and 
$\aA_{X, 1/2}$ is of 
finite length.
(See the proof of~\cite[Theorem~2.29]{Tolim}.)
\end{proof}
We remark that the abelian category $\aA_X$ contains 
the subcategory, 
\begin{align*}
\Coh_{\le 1}(X)[-1] \subset \aA_X, 
\end{align*}
which is closed under subobjects and quotients. 
Hence for $F\in \Coh_{\le 1}(X)$, 
the object $F[-1] \in \aA_X$ is 
$Z_{\omega, \theta}$-(semi)stable if and only 
if $F$ is $Z_{\omega}$-(semi)stable in the sense of 
Example~\ref{exam:stab} (iii). 

Let 
\begin{align*}
\Stab_{\Gamma_{\bullet}}(\dD_X)
\end{align*}
be the space of weak stability conditions on 
$\dD_X$, as in (\ref{StabG}).
It is straightforward to check that the 
pairs $(Z_{\omega, \theta}, \aA_X)$
satisfy the conditions
required to construct the space 
$\Stab_{\Gamma_{\bullet}}(\dD_X)$, 
i.e. local finiteness, support property
in~\cite[Section~2]{Tcurve1}.
Therefore by applying~\cite[Lemma~7.1]{Tcurve1}, we have 
the continuous morphism for a fixed $\omega$, 
\begin{align}\label{cmap}
(0, 1) \ni \theta \mapsto 
(Z_{\omega, \theta}, \aA_X) \in \Stab_{\Gamma_{\bullet}}(\dD_X). 
\end{align} 

\subsection{Comparison with $\mu$-limit stability}
Let us take 
\begin{align*}
B+i\omega \in H^2(X, \mathbb{C})
\end{align*}
with $\omega$ ample.
Below we set $B=k\omega$ for $k\in \mathbb{R}$. 
 In~\cite{Tolim2},
the author introduced the notion of 
$\mu_{B+i\omega}$-\textit{limit stability} 
on the abelian category $\aA^p$.  
Suppose that an object $E\in \aA^p[-1]$ satisfies 
\begin{align}\label{chE}
\ch(E)=(1, 0, -\beta, -n) \in H^0 \oplus H^2 \oplus H^4 \oplus H^6,
\end{align}
Then by~\cite[Lemma~3.8]{Tolim2} and~\cite[Proposition~3.13]{Tolim2},
an object $E[1] \in \aA^p$
 is $\mu_{B+i\omega}$-limit
semistable if and only if
$E \in \aA_{1/2}^p$ and 
the following conditions are satisfied:
\begin{itemize}
\item For any pure one dimensional sheaf $0\neq F$
which admits a strict monomorphism $F \hookrightarrow E[1]$
in $\aA_{1/2}^p$, we have 
$\ch_3(F)/\omega \ch_2(F) \le -2k$.
\item For any pure one dimensional sheaf $0\neq G$
which admits a strict epimorphism $E[1] \twoheadrightarrow G$
in $\aA_{1/2}^p$, we have 
$\ch_3(G)/\omega \ch_2(G) \ge -2k$.
\end{itemize}
Now we set
\begin{align}\label{2theta}
k=\frac{1}{2\tan \pi \theta}. 
\end{align}
Here $k=0$ if $\theta=1/2$. 
By Lemma~\ref{lem:rank:c1} and the arguments in
the proof of Lemma~\ref{lem:system},
the following lemma obviously follows. 
\begin{lem}\label{lem:compare}
Take $k$ and $\theta$ satisfying (\ref{2theta}). 
Then for an object $E \in \aA^p[-1]$ satisfying (\ref{chE}), 
$E[1] \in \aA^p$ 
is $\mu_{k\omega +i\omega}$-limit 
semistable in the sense of~\cite[Section~3]{Tolim2}
if and only if 
$E \in \aA_X$ and $E$ is $Z_{\omega, \theta}$-semistable
satisfying 
\begin{align*}
\cl(E)=(-n, -\beta, 1) \in \Gamma. 
\end{align*} 
\end{lem}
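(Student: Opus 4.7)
The plan is to exhibit a dictionary between the two stability notions on objects $E \in \aA^p[-1]$ with the prescribed Chern character, and verify that it is compatible with the substitution $k = 1/(2\tan\pi\theta)$. There are two aspects to reconcile: the ambient heart ($\aA_X$ versus $\aA^p[-1]$) and the slope inequalities (the $\arg Z_{\omega,\theta}$ condition versus the $\ch_3/(\omega\cdot\ch_2)$ condition).

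First I would handle the ambient heart. Recall from~\cite[Section~3]{Tolim2} that $\mu_{k\omega+i\omega}$-limit semistability of $E[1]$ already forces $E \in \aA_{1/2}^p[-1]$. Because $\ch(E)=(1,0,-\beta,-n)$ we have $\rank(E)=1$ and $c_1(E)=0$, so Lemma~\ref{lem:rank:c1} immediately places $E$ inside $\aA_{X,1/2} \subset \aA_X$. Conversely, since $\aA_{X,1/2} = \aA_{1/2}^p[-1] \cap \aA_X$, any $Z_{\omega,\theta}$-semistable $E$ lies in $\aA_{1/2}^p[-1]$ (the alternative $E \in \aA_{X,1}$ is excluded by the rank condition). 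Thus both stability notions concentrate on the same subclass $\aA_{X,1/2}$ of $\aA_{1/2}^p[-1]$.

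Next, by the characterization of $Z_{\omega,\theta}$-semistability recorded in the proof of Lemma~\ref{lem:system}, testing $E$ amounts to verifying $\arg Z_{\omega,\theta}(A) \le \arg Z_{\omega,\theta}(B)$ for every exact sequence $0 \to A \to E \to B \to 0$ in $\aA_X$ with $A,B \in \aA_{X,1/2}$. Since $\rank(E)=1$, one of $A,B$ has rank $0$: either $A = F[-1]$ with $F \in \Coh_{\le 1}(X)$ (and $A \in \aA_{X,1/2}$ forces $F$ to be pure one-dimensional), or dually $B = G[-1]$ with $G$ pure one-dimensional. I would then check that such exact sequences in $\aA_X$ correspond bijectively to strict monomorphisms $F \hookrightarrow E[1]$, resp.\ strict epimorphisms $E[1] \twoheadrightarrow G$, in $\aA_{1/2}^p$: the Chern characters of the complements are again of rank $\le 1$ with $c_1=0$, so Lemma~\ref{lem:rank:c1} sends the relevant $\aA^p[-1]$-cokernels and $\aA^p[-1]$-kernels back into $\aA_X$, and the shift by $[1]$ intertwines the two torsion pairs.

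Finally, I would match the numerical inequalities. Using Riemann--Roch on a Calabi--Yau 3-fold, $\ch_3(F)=\chi(F)$ and $\ch_2(F)=[F]$ for any one-dimensional $F$. For $A = F[-1]$ with $F$ pure one-dimensional and $\omega\cdot[F]>0$, one has
\[
Z_{\omega,\theta}(A) = -\chi(F) + (\omega\cdot[F])\sqrt{-1},
\qquad
\arg Z_{\omega,\theta}(B) = \pi\theta,
\]
so the inequality $\arg Z_{\omega,\theta}(A) \le \pi\theta$ is equivalent, by the strict monotonicity of $\cot$ on $(0,\pi)$, to $-\chi(F)/(\omega\cdot[F]) \ge \cot(\pi\theta)$, i.e.\ $\chi(F)/(\omega\cdot[F]) \le -2k$; this is exactly the strict-mono condition in the $\mu_{k\omega+i\omega}$-limit definition. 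The quotient case $B = G[-1]$ is symmetric and yields the reverse inequality $\chi(G)/(\omega\cdot[G]) \ge -2k$, matching the strict-epi condition.

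The main obstacle I anticipate is the bookkeeping in the middle paragraph: one must check that \emph{every} strict sub/quotient in $\aA_{1/2}^p$ relevant to $\mu$-limit stability actually arises from an exact sequence in $\aA_X$ with both terms in $\aA_{X,1/2}$, and vice versa. Once this compatibility between the torsion pairs $(\aA_1^p,\aA_{1/2}^p)$ and $(\aA_{X,1},\aA_{X,1/2})$ is set up carefully, the rest reduces to the trigonometric identity relating $\theta$ and $k$.
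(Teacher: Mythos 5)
Your proposal is correct and follows exactly the route the paper intends: the paper dispatches this lemma with the single remark that it follows from Lemma~\ref{lem:rank:c1} and the arguments in the proof of Lemma~\ref{lem:system}, and your write-up fills in precisely those details --- Lemma~\ref{lem:rank:c1} to identify the ambient hearts, the torsion-pair characterization of $Z_{\omega,\theta}$-semistability to reduce to rank-zero sub/quotient objects, and the identity $2k=\cot(\pi\theta)$ to match the slope inequalities. The sign and inequality bookkeeping in your trigonometric comparison is also correct.
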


\subsection{Moduli stacks of semistable objects}
In this subsection, we discuss moduli stacks of 
semistable objects in $\aA_X$. 
We denote by $\widehat{\mM}$ the 2-functor, 
\begin{align*}
\widehat{\mM} \colon (\mathrm{Sch}/\mathbb{C}) 
\to (\mathrm{groupoid}), 
\end{align*} 
which sends a $\mathbb{C}$-scheme $S$ 
to the groupoid whose objects consist of 
objects 
\begin{align*}
\eE \in D(\Coh(X\times S)),
\end{align*}
such that  
\begin{itemize}
\item The object $\eE$ is relatively perfect over $S$. 
(See~\cite[Definition~2.1.1]{LIE}.)
In particular for each $s\in S$, 
we have the derived pull-back, 
\begin{align}\label{der:back}
\eE_{s} \cneq \dL i_{s}^{\ast} \eE \in D^b \Coh(X).
\end{align} 
Here $i_s \colon X \times \{s \} \hookrightarrow X \times S$
is the inclusion. 
\item The object (\ref{der:back}) 
satisfies 
\begin{align*}
\Ext^{i}(\eE_s, \eE_s)=0, \quad i<0, 
\end{align*}
for any $s\in S$. 
\end{itemize}
By the result of Lieblich~\cite{LIE}, 
the 2-functor $\widehat{\mM}$ 
is an Artin stack locally of finite type over $\mathbb{C}$. 
We note that the stack $\mM$ considered in (\ref{Coh})
is an open substack of $\widehat{\mM}$. 

Let $\oO bj(\aA_X)$ be the 
(abstract)
substack, 
\begin{align*}
\oO bj(\aA_X) \subset \widehat{\mM}, 
\end{align*}
whose $S$-valued points consist 
of $\eE \in \widehat{\mM}(S)$ satisfying 
$\eE_s \in \aA_X$
for all $s\in S$. 
The stack $\oO  bj(\aA_X)$ decomposes as 
\begin{align*}
\oO bj(\aA_X)=\coprod_{v \in \Gamma}\oO bj^{v}(\aA_X),
\end{align*}
where $\oO bj^{v}(\aA_X)$ is the stack of 
objects $E\in \aA_X$ with $\cl(E)=v$. 
As proved in~\cite[Lemma~3.16]{Tcurve1}, 
the embedding 
\begin{align*}
\oO bj^{v}(\aA_X) \subset \widehat{\mM}, 
\end{align*}
is an open immersion if $v=(n, \beta, r) \in \Gamma$
with $r=0$ or $r=1$. 
In particular
in that case, 
 $\oO bj^{v}(\aA_X)$ is an 
Artin stack locally of finite type over $\mathbb{C}$. 
In general, $\oO bj^{v}(\aA_X)$ is at least 
a locally 
constructible subset of $\widehat{\mM}$.

Let $\omega$ and $\theta$ be as in (\ref{dot}).
We define 
\begin{align*}
\widehat{\mM}_{n, \beta}(\omega, \theta)
 \subset \oO bj^{(-n, -\beta, 1)}(\aA_X), 
\end{align*}
to be the stack which parameterizes
 $Z_{\omega, \theta}$-semistable 
objects $E\in \aA_X$ with $\cl(E)=(-n, -\beta, 1)$. 
We have the following proposition. 
\begin{prop}\label{prop:Mn}
(i) The stack $\widehat{\mM}_{n, \beta}(\omega, \theta)$ is an 
Artin stack of finite type over $\mathbb{C}$. 

(ii) If $\theta$ is sufficiently close to $1$, 
then we have 
\begin{align*}
\widehat{\mM}_{n, \beta}(\omega, \theta) \cong [P_n(X, \beta)/\mathbb{G}_m], 
\end{align*}
where $\mathbb{G}_m$ acts on $P_n(X, \beta)$ trivially. 

(iii) We have the isomorphism, 
\begin{align*}
\widehat{\mM}_{n, \beta}(\omega, \theta) \stackrel{\cong}{\to}
\widehat{\mM}_{-n, \beta}(\omega, 1-\theta), 
\end{align*}
given by 
\begin{align*}
E \mapsto \dR \hH om(E, \oO_X). 
\end{align*}

(iv) We have 
\begin{align*}
\widehat{\mM}_{n, \beta}(\omega, \theta=1/2)=\emptyset, 
\end{align*}
for $\lvert n \lvert \gg 0$. 
\end{prop}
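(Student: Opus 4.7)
The plan is to deduce all four parts by leveraging the dictionary in Lemma~\ref{lem:compare} between $Z_{\omega,\theta}$-semistable objects in $\aA_X$ of class $(-n,-\beta,1)$ and $\mu_{k\omega+i\omega}$-limit semistable objects in $\aA^p[-1]$ of Chern character $(1,0,-\beta,-n)$, where $k=1/(2\tan\pi\theta)$. This lets me transport results on limit stability from~\cite{Tolim2, Tolim, Bay} to our weak-stability setting, leaving only the self-duality of $Z_{\omega,\theta}$ under $\theta\mapsto 1-\theta$ to verify directly.

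For (i), the inclusion $\oO bj^{(-n,-\beta,1)}(\aA_X) \subset \widehat{\mM}$ is already open (since $r=1$) by the discussion preceding the proposition, so it is an Artin stack locally of finite type. Cutting out $Z_{\omega,\theta}$-semistable objects is an open condition by the standard Harder--Narasimhan-in-families argument (as used in~\cite[Section~3]{Tcurve1}), so $\widehat{\mM}_{n,\beta}(\omega,\theta)$ remains Artin and locally of finite type; finiteness of type then reduces to boundedness of the corresponding family of $\mu_{k\omega+i\omega}$-limit semistable objects, which is proved in~\cite{Tolim2}. For (ii), as $\theta\to 1$ we have $k\to -\infty$, and in this regime the $\mu_{k\omega+i\omega}$-limit semistable objects of the prescribed Chern character are characterized in~\cite{Tolim2} as exactly the two-term complexes $I^\bullet=(\oO_X\to F)$ arising from PT stable pairs $(F,s)$. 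The automorphism group of such a complex is $\mathbb{G}_m$ (scaling $s$), yielding the $\mathbb{G}_m$-gerbe description $\widehat{\mM}_{n,\beta}(\omega,\theta)\cong [P_n(X,\beta)/\mathbb{G}_m]$ via the universal family on $P_n(X,\beta)$.

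For (iii), set $\mathbb{D}(E):=\dR\hH om(E,\oO_X)$, a contravariant involution of $D^b\Coh(X)$. From $\ch_i(\mathbb{D}(E))=(-1)^i\ch_i(E)$ one reads off that $\cl$ is sent from $(-n,-\beta,1)$ to $(n,-\beta,1)$, which matches the class parameterized by $\widehat{\mM}_{-n,\beta}(\omega,1-\theta)$. Two compatibilities remain: first, $\mathbb{D}$ should preserve $\aA_X$ along the semistable locus --- by Lemma~\ref{lem:rank:c1} together with~\cite[Lemma~3.2]{Tolim}, every $Z_{\omega,\theta}$-semistable $E$ admits a triangle $I_C \to E \to F[-1]$ with $F\in\Coh_{\le 1}(X)$ pure, and Grothendieck--Serre duality on the CY3 converts this into a parallel triangle for $\mathbb{D}(E)$ again lying in $\aA_X$; second, the central charge should satisfy the right equivariance, which reduces to noting $\tan\pi(1-\theta)=-\tan\pi\theta$, so $k\mapsto -k$ under the dictionary of Lemma~\ref{lem:compare}, which is precisely the symmetry of $\mu_{k\omega+i\omega}$-limit semistability under derived dual recorded in~\cite{Tolim2}.

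For (iv), the involution from (iii) at $\theta=1/2$ swaps $n\leftrightarrow -n$, so it suffices to bound $n$ from one side. Under Lemma~\ref{lem:compare} the value $\theta=1/2$ corresponds to $k=0$, i.e.\ $\mu_{i\omega}$-limit stability with vanishing $B$-field, and emptiness of this moduli for $n \ll 0$ (with $\beta$ fixed) is established in~\cite{Tolim2}. The main obstacle is part (iii): the Chern-character bookkeeping and the stability symmetry are immediate, but verifying that $\mathbb{D}(E)$ really lies in $\aA_X$ for every semistable $E$ (not merely for stable pair complexes) requires the explicit structure theorem coming from the torsion pair $(\aA_{X,1},\aA_{X,1/2})$ together with a careful application of Grothendieck--Serre duality to control the possible zero-dimensional contributions.
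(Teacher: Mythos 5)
Your proposal is correct and follows essentially the same route as the paper: all four statements are transported through the dictionary of Lemma~\ref{lem:compare} to the corresponding facts about $\mu_{k\omega+i\omega}$-limit stability established in~\cite{Tolim2} (for (i), (ii), (iv)) and~\cite{Tolim} (for the duality in (iii)). The extra detail you supply — the sign bookkeeping $k\mapsto -k$ under $\theta\mapsto 1-\theta$, the $k\to-\infty$ limit for (ii), and the use of the torsion pair to check $\dR\hH om(E,\oO_X)\in\aA_X$ — is consistent with, and fleshes out, the citations the paper relies on.
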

\begin{proof}
By Lemma~\ref{lem:compare}, 
the stack $\widehat{\mM}_{n, \beta}(\omega, \theta)$
is identified with the moduli stack 
of $\mu_{k\omega+i\omega}$-limit semistable 
objects $E \in \aA_{1/2}^p$ satisfying
(\ref{chE}), where $k$ is given by (\ref{2theta}). 
The results of the proposition follow
from the corresponding results 
for $\mu_{k\omega +i\omega}$-limit stability. 
Namely (i) follows from~\cite[Proposition~3.17]{Tolim2}, 
(ii) follows from~\cite[Theorem~3.21]{Tolim2},
(iii) follows from~\cite[Lemma~2.28]{Tolim}
and (iv) follows from~\cite[Lemma~4.4]{Tolim2}.
\end{proof}

\subsection{Rank one counting invariants}
Using the moduli stack $\widehat{\mM}_{n, \beta}(\omega, \theta)$, 
we are able to construct the invariant, 
\begin{align*}
\DT_{n, \beta}(\omega, \theta) \in \mathbb{Q}, 
\end{align*}
which counts $Z_{\omega, \theta}$-semistable 
$E\in \aA_X$ with $\cl(E)=(-n, -\beta, 1)$. 
Namely, suppose that any 
$Z_{\omega, \theta}$-semistable object $E\in \aA_{X}$
with $\cl(E)=(-n, -\beta, 1)$ is $Z_{\omega, \theta}$-stable. 
 (This is true if $\omega$
and $\theta$ are chosen to be generic.)
Then 
we have 
\begin{align}\label{gerbe}
\widehat{\mM}_{n, \beta}(\omega, \theta)
\cong [\widehat{M}_{n, \beta}(\omega, \theta)/\mathbb{G}_m]
\end{align}
 for an algebraic space 
$\widehat{M}_{n, \beta}(\omega, \theta)$
of finite type over $\mathbb{C}$. 
If $\nu_{M}$ is the Behrend 
function on $\widehat{M}_{n, \beta}(\omega, \theta)$, 
then we can define 
\begin{align*}
\DT_{n, \beta}(\omega, \theta)=\int_{\widehat{M}_{n, \beta}(\omega, \theta)}
\nu_{M} d\chi. 
\end{align*}
On the other hand, suppose that there is a
 strictly $Z_{\omega, \theta}$-semistable 
object $E \in \aA_X$ satisfying 
$\cl(E)=(-n, -\beta, 1)$.
Then the stack $\widehat{\mM}_{n, \beta}(\omega, \theta)$
is not written in a way (\ref{gerbe}),
and we need to modify the definition
of $\DT_{n, \beta}(\omega, \theta)$ using the Hall type algebra
as we discussed in the previous section.   
Namely we consider 
\begin{align*}
H(\aA_X) \cneq K_0(\mathrm{St}/\oO bj(\aA_X)), 
\end{align*}
and the $\ast$-product on $H(\aA_X)$
given in a similar way to (\ref{product}), 
by replacing $\widehat{\mM}$ by $\oO bj(\aA_X)$. 
By Proposition~\ref{prop:Mn},
 we can define the elements in $\hH(\aA_X)$, 
\begin{align*}
\widehat{\delta}_{n, \beta}(\omega) &=
[\mM_{n, \beta}(\omega) \stackrel{i}{\hookrightarrow}
\oO bj(\aA_X) ], \\
\widehat{\delta}_{n, \beta}(\omega, \theta)
&=[\widehat{\mM}_{n, \beta}(\omega, \theta) \hookrightarrow 
\oO bj(\aA_X)],
\end{align*} 
where $\mM_{n, \beta}(\omega)$ is the stack introduced in (\ref{substack}),
and $i$ sends $E \in \Coh_{\le 1}(X)$ to $E[-1] \in \aA_X$. 
Its `logarithm' is defined by, 
\begin{align*}
\widehat{\epsilon}_{n, \beta}(\omega, \theta)
=\sum_{\begin{subarray}{c}l \ge 1, 1 \le e \le l, 
(n_i, \beta_i) \in \mathbb{Z} \oplus H_2(X, \mathbb{Z}), \\
n_1 + \cdots +n_l=n, 
 \beta_1 + \cdots +\beta_l=\beta \\
Z_{\omega, \theta}(-n_i, -\beta_i, 0) \in \mathbb{R}_{>0} \exp(i\pi \theta), 
i\neq e. 
\end{subarray}}
&\frac{(-1)^{l-1}}{l} 
\widehat{\delta}_{n_1, \beta_1}(\omega)\ast \cdots 
\ast \widehat{\delta}_{n_{e-1}, \beta_{e-1}}(\omega) \\
&\ast \widehat{\delta}_{n_e, \beta_e}(\omega, \theta) \ast
\widehat{\delta}_{n_{e+1}, \beta_{e+1}}(\omega)\ast \cdots \ast
\widehat{\delta}_{n_{l}, \beta_l}(\omega).  
\end{align*}
Then $\DT_{n, \beta}(\omega, \theta) \in \mathbb{Q}$ can be defined by 
\begin{align*}
\DT_{n, \beta}(\omega, \theta)=\lim_{t\to 1}(t^2 -1)
P_t(-\nu \cdot \epsilon_{n, \beta}(\omega, \theta)),
\end{align*}
where $\nu$ is defined similarly to (\ref{map:nu})
by using the Behrend function on $\oO bj(\aA_X)$. 
Also see~\cite[Definition~4.1]{Tolim2}, 
\cite[Definition~4.11]{Tcurve1}.
We define the invariant $L_{n, \beta} \in \mathbb{Q}$
as follows. 
\begin{defi}
We define $L_{n, \beta} \in \mathbb{Q}$ to be 
\begin{align*}
L_{n, \beta} \cneq \DT_{n, \beta}(\omega, \theta=1/2). 
\end{align*}
\end{defi}
As a corollary of Proposition~\ref{prop:Mn}, 
we have the following: 
\begin{cor}\label{cor:inv}
(i) If $\theta$ is sufficiently close to $1$, we have 
\begin{align*}
\DT_{n, \beta}(\omega, \theta)=P_{n, \beta}.
\end{align*}

(ii) The invariant $L_{n, \beta}$ satisfies, 
\begin{align*}
L_{n, \beta}=L_{-n, \beta}, 
\end{align*}
and they are zero for $\lvert n \rvert \gg 0$. 
\end{cor}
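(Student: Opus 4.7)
The plan is to derive both parts directly from Proposition \ref{prop:Mn} by tracing how its geometric content propagates through the Hall-algebra definitions of $\DT_{n,\beta}(\omega,\theta)$ and $L_{n,\beta}$.

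For part (i), I would first invoke Proposition \ref{prop:Mn} (ii) to identify $\widehat{\mM}_{n,\beta}(\omega,\theta) \cong [P_n(X,\beta)/\mathbb{G}_m]$ for $\theta$ sufficiently close to $1$. Since every point has automorphism group precisely $\mathbb{G}_m$, no $Z_{\omega,\theta}$-semistable object of class $(-n,-\beta,1)$ is strictly semistable. I would then check that no nontrivial $l\ge 2$ term in the definition of $\widehat{\epsilon}_{n,\beta}(\omega,\theta)$ can contribute: for the rank-zero factors $\widehat{\delta}_{n_i,\beta_i}(\omega)$ to live on the ray $\mathbb{R}_{>0}\exp(i\pi\theta)$ with $\theta$ close to $1$, one needs $\arg(-n_i+(\omega\cdot\beta_i)\sqrt{-1})=\pi\theta$ with $\beta_i$ in the finite set $\{0<\beta_i\le\beta\}$, forcing $n_i \ll 0$ which (together with $\sum n_i=n$) is impossible once $\theta$ is chosen near enough to $1$. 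Thus $\widehat{\epsilon}_{n,\beta}(\omega,\theta)=\widehat{\delta}_{n,\beta}(\omega,\theta)$, and applying $\lim_{t\to 1}(t^2-1)P_t(-\nu\cdot -)$ to a trivial $\mathbb{G}_m$-gerbe recovers $\int_{P_n(X,\beta)}\nu\, d\chi=P_{n,\beta}$.

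For the self-duality $L_{n,\beta}=L_{-n,\beta}$ I would use Proposition \ref{prop:Mn} (iii) with $\theta=1/2$, where $1-\theta=\theta$, so the derived dual gives an isomorphism $\widehat{\mM}_{n,\beta}(\omega,1/2)\stackrel{\sim}{\to}\widehat{\mM}_{-n,\beta}(\omega,1/2)$. The key observation is that $\dR\hH om(-,\oO_X)$ also acts on $\Coh_{\le 1}(X)[-1]$ by $F[-1]\mapsto F^D[-1]$ with $\chi(F^D)=-\chi(F)$, preserving $\omega$-Gieseker semistability; since the phase $\pi/2$ corresponds exactly to the condition $\chi=0$, the involution fixes each rank-zero factor $\widehat{\delta}_{0,\beta_i}(\omega)$ appearing in the ray sum. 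Compatibility of duality with the $\ast$-product (it reverses exact sequences) and with the Behrend function (Behrend's construction is categorical, and $\dR\hH om(-,\oO_X)$ is an anti-autoequivalence of $D^b\Coh(X)$) then transports the entire defining sum of $\widehat{\epsilon}_{n,\beta}(\omega,1/2)$ to that of $\widehat{\epsilon}_{-n,\beta}(\omega,1/2)$, giving the equality after applying $-\nu\cdot$ and $\lim_{t\to 1}(t^2-1)P_t(-)$.

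For the vanishing $L_{n,\beta}=0$ when $\lvert n\rvert \gg 0$, the phase condition at $\theta=1/2$ forces $n_i=0$ for every rank-zero factor, so $n_e=n$ on the unique rank-one factor $\widehat{\delta}_{n,\beta_e}(\omega,1/2)$ with $\beta_e\le\beta$. Since $\{0\le\beta_e\le\beta\}$ is finite and Proposition \ref{prop:Mn} (iv) makes $\widehat{\mM}_{n,\beta_e}(\omega,1/2)$ empty for $\lvert n\rvert$ large uniformly over this finite set, every term in $\widehat{\epsilon}_{n,\beta}(\omega,1/2)$ vanishes, hence so does $L_{n,\beta}$.

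The main obstacle is the part-(ii) self-duality step: verifying that the derived dual lifts from a mere stack isomorphism to a structure-preserving involution of the relevant slice of the Hall algebra, with full compatibility of the Behrend function and the combinatorial indexing set of $\widehat{\epsilon}$. Signs and the swap between sub/quotient under duality have to match up precisely with the phase-matching constraints at $\theta=1/2$, and this is the only place where a purely formal argument does not suffice.
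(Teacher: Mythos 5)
Your overall route is the same as the paper's: the corollary is deduced directly from Proposition~\ref{prop:Mn}, with part (i) coming from Proposition~\ref{prop:Mn}~(ii), the symmetry $L_{n,\beta}=L_{-n,\beta}$ from Proposition~\ref{prop:Mn}~(iii) at $\theta=1/2$ (where the phase constraint forces $n_i=0$ on every rank-zero factor, so the dualizing functor preserves the indexing data of $\widehat{\epsilon}_{n,\beta}(\omega,1/2)$), and the vanishing for $\lvert n\rvert\gg 0$ from Proposition~\ref{prop:Mn}~(iv) together with the finiteness of the set of effective classes $\beta_e\le\beta$. Those three reductions are exactly the intended ones, and your treatment of part (ii) is sound.

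The one step that fails as written is your mechanism for killing the $l\ge 2$ terms in part (i). First, the sign is backwards: $\arg(-n_i+(\omega\cdot\beta_i)\sqrt{-1})=\pi\theta$ with $\theta$ near $1$ forces the real part $-n_i$ to be negative and large in absolute value, i.e.\ $n_i\gg 0$, not $n_i\ll 0$. Second, and more seriously, even with the correct sign the constraint $\sum n_i=n$ yields no contradiction, because the rank-one slot $n_e$ is exempt from the phase condition and can be arbitrarily negative to compensate; excluding that would require an emptiness statement for $\widehat{\mM}_{n_e,\beta_e}(\omega,\theta)$ with $n_e\ll 0$, which Proposition~\ref{prop:Mn} supplies only at $\theta=1/2$. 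The correct reason the correction terms do not contribute is already contained in Proposition~\ref{prop:Mn}~(ii): since $\widehat{\mM}_{n,\beta}(\omega,\theta)$ is a trivial $\mathbb{G}_m$-gerbe over the scheme $P_n(X,\beta)$, every $Z_{\omega,\theta}$-semistable object of class $(-n,-\beta,1)$ is stable (its automorphism group is exactly $\mathbb{G}_m$), while the $l\ge 2$ terms of $\widehat{\epsilon}_{n,\beta}(\omega,\theta)$ are supported on objects carrying a proper subobject of the same phase; in the absence of strictly semistable objects the Joyce--Song machinery then identifies $\lim_{t\to 1}(t^2-1)P_t(-\nu\cdot\widehat{\epsilon}_{n,\beta}(\omega,\theta))$ with the naive weighted Euler characteristic $\int_{P_n(X,\beta)}\nu\,d\chi=P_{n,\beta}$. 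With that substitution your argument goes through.
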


\subsection{Wall-crossing formula}
We define the series $\DT(\omega, \theta)$ by 
\begin{align}\label{series}
\DT(\omega, \theta) \cneq \sum_{n, \beta}\DT_{n, \beta}(\omega, \theta)
q^n t^{\beta}. 
\end{align}
Similarly to~\cite[Definition~4.11]{Tcurve1}, \cite[Section~4.3]{Tcurve2}, 
the series (\ref{series}) can be defined in a
certain topological vector space
for $0<\theta<1/2$.
Also as in~\cite[Subsection~5.1]{Tcurve1}, 
it is straightforward to check
the existence of wall and chamber structure on 
the space $\Stab_{\Gamma_{\bullet}}(\dD_X)$. 
Therefore the
following limiting series makes sense for $\phi \in (0, 1/2)$, 
\begin{align*}
\DT(\omega, \phi_{\pm}) \cneq \lim_{\theta \to \phi \pm 0}
\DT(\omega, \theta). 
\end{align*}
Using Joyce-Song's wall-crossing formula~\cite{JS}
and assuming the result by Behrend-Getzler
\footnote{The result of~\cite{BG} is not yet 
written at the moment the author writes this
manuscript}~\cite{BG},
we have the following theorem.
(Also see Remark~\ref{rmk:BG} and~\cite[Remark~2.32, Conjecture~4.3]{Tcurve2}.)
\begin{thm}
For $0<\phi<1/2$, we have the following formula, 
\begin{align}\label{WCF}
\DT(\omega, \phi_{+})=
\DT(\omega, \phi_{-})
\cdot \prod_{\begin{subarray}{c} n>0, \beta >0 \\
-n+(\omega \cdot \beta)i \in \mathbb{R}_{>0}
e^{i\pi \phi}
\end{subarray}}
\exp\left((-1)^{n-1}nN_{n, \beta}q^n t^{\beta} \right). 
\end{align}
\end{thm}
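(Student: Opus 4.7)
The plan is to derive \eqref{WCF} by (i) writing down the Joyce--Song wall-crossing identity at $\theta=\phi$ in the Hall algebra $H(\aA_X)$, (ii) applying the integration map $\Psi\colon u\mapsto \lim_{t\to 1}(t^2-1)P_t(-\nu\cdot u)$ to pass from Hall-algebra identities to numerical ones, and (iii) exponentiating the result. The starting observation is that the wall $W_\phi\subset\Stab_{\Gamma_\bullet}(\dD_X)$ consists exactly of stability conditions for which $\arg Z_{\omega,\phi,1}(1)=\pi\phi$ coincides with the argument $\arg Z_\omega(F)$ of some one-dimensional semistable sheaf $F\in\Coh_{\le 1}(X)$; such sheaves are precisely those with $Z_\omega(\chi(F),[F])\in\mathbb{R}_{>0}e^{i\pi\phi}$, and they are enumerated by the invariants $N_{n,\beta}$ of Lemma~\ref{Nomega}.

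First I would show that, for each $(n,\beta)$, only finitely many pairs $(n_i,\beta_i)$ with $\arg Z_\omega(n_i,\beta_i)=\pi\phi$ can appear in a Jordan--H\"older decomposition of a $Z_{\omega,\phi}$-semistable object $E\in\aA_X$ with $\cl(E)=(-n,-\beta,1)$; combined with local finiteness of walls on $\Stab_{\Gamma_\bullet}(\dD_X)$ (cf.~\cite[Section~5.1]{Tcurve1}), this ensures that the identity \eqref{WCF} is well-defined in the relevant topological vector space. Then, using the torsion pair $(\aA_{X,1},\aA_{X,1/2})$ and the characterization of $Z_{\omega,\theta}$-semistability as in the proof of Lemma~\ref{lem:system}, I would write the Hall-algebra elements $\widehat{\epsilon}_{n,\beta}(\omega,\phi_+)$ as a universal combinatorial sum over ordered sequences of $\epsilon_{n_i,\beta_i}(\omega)$'s (the semistable one-dimensional factors on the wall) bracketed around a rank-one factor $\widehat{\epsilon}_{n_e,\beta_e}(\omega,\phi_-)$. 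This is precisely the shape of the Joyce--Song identity \cite[Theorem~5.4]{JS} applied to our pair $(\aA_X,Z_{\omega,\theta})$.

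Second I would apply the integration map $\Psi$. The content here is that $\Psi$ must be a Lie-algebra homomorphism from a suitable subalgebra of $H(\aA_X)$ (with the twisted commutator bracket) to $\mathbb{Q}$ with the bracket $[q^{n_1}t^{\beta_1},q^{n_2}t^{\beta_2}]=(-1)^{\chi(v_1,v_2)}\chi(v_1,v_2)q^{n_1+n_2}t^{\beta_1+\beta_2}$. For objects of $\Coh(X)$ this is exactly \cite[Theorem~5.14]{JS}, whose proof rests on the critical-locus description of the local moduli; for the heart $\aA_X\subset D^b\Coh(X)$ it requires the derived-category analogue, which is the announced result of Behrend--Getzler \cite{BG} (cf.~\cite[Conjecture~4.3]{Tcurve2} and Remark~\ref{rmk:BG}). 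A direct computation using Serre duality on the Calabi--Yau 3-fold $X$ gives $\chi(\cl(I^\bullet),\cl(F[-1]))=-n$ when $\ch(F)=(0,0,\beta,n)$, yielding the sign $(-1)^{n-1}$ and the numerical factor $n$ appearing in the exponential.

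Putting these together, the iterated Lie brackets produced by $\Psi$ applied to the Joyce--Song identity collapse into a single exponential factor $\exp((-1)^{n-1}nN_{n,\beta}q^n t^\beta)$ for each $(n,\beta)$ on the wall, giving \eqref{WCF}. The main obstacle is step two: without \cite{BG}, the Lie-algebra-homomorphism property of $\Psi$ on $H(\aA_X)$ is not established, and this is precisely why the theorem is stated conditionally on the announced result. A secondary, purely technical hurdle is verifying that the rearrangements inside the infinite product are compatible with the topology on the target of \eqref{series}; this follows from the support-property condition used to define $\Stab_{\Gamma_\bullet}(\dD_X)$, combined with the boundedness statement of Proposition~\ref{prop:Mn}~(i).
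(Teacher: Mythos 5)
Your proposal is correct and is essentially the same argument as the paper's: the paper's proof merely checks that the image $\vV$ of the family $\theta\mapsto(Z_{\omega,\theta},\aA_X)$ satisfies \cite[Assumption~4.1]{Tcurve1} and then invokes \cite[Theorem~5.8, Theorem~8.10 (arXiv version)]{Tcurve1}, and those cited theorems are established by precisely the steps you describe --- the Joyce--Song identity in the Hall algebra of $\aA_X$, the integration map whose Lie-algebra-homomorphism property rests on the announced result \cite{BG}, and the Euler-pairing computation $\chi(I^{\bullet},F[-1])=-n$ producing the factor $(-1)^{n-1}n$. The one point you sketch that the paper makes explicit is the verification of the finiteness/support hypotheses for $\vV$, which you correctly locate in the local finiteness of walls and Proposition~\ref{prop:Mn}~(i).
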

\begin{proof}
Let us fix $\omega$ and consider the 
subset 
\begin{align*}
\vV \subset \Stab_{\Gamma_{\bullet}}(\dD_X),
\end{align*}
defined by the image of the map (\ref{cmap}). 
Then it is easy to check
that the subspace $\vV$
satisfies the assumptions of~\cite[Assumption~4.1]{Tcurve1}.
Therefore the result follows from~\cite[Theorem~5.8, 
Theorem~8.10 (arXiv version)]{Tcurve1}.
\end{proof}
As a corollary of the above theorem, we obtain 
the desired product expansion (\ref{product1}). 
\begin{cor}
We have the formula, 
\begin{align}\label{PTprod}
\PT(X)&=\prod_{n>0, \beta >0}
\exp\left((-1)^{n-1} nN_{n, \beta}q^n t^{\beta}\right)
\left(\sum_{n, \beta}L_{n, \beta}q^n t^{\beta} \right). 
\end{align}
\end{cor}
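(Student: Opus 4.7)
The plan is to interpret the two sides of (\ref{PTprod}) as specializations of the one-parameter family of generating series $\DT(\omega,\theta)$ at two distinguished positions in the space of weak stability conditions, and to relate them via the wall-crossing formula (\ref{WCF}). Concretely, by Corollary~\ref{cor:inv}(i) there exists $\theta_{0}$ close to $1$ such that $\DT_{n,\beta}(\omega,\theta_{0}) = P_{n,\beta}$ for every $(n,\beta)$, so
\[
\DT(\omega,\theta_{0}) \;=\; \PT(X);
\]
and by the very definition of $L_{n,\beta}$ together with Corollary~\ref{cor:inv}(ii),
\[
\DT(\omega,1/2) \;=\; \sum_{n,\beta} L_{n,\beta}\,q^{n} t^{\beta}.
\]

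Next I would track $\DT(\omega,\theta)$ as $\theta$ moves from $\theta_{0}$ down to $1/2$. On each chamber of the weak stability space the series is constant, and across each wall it jumps by the factor appearing in (\ref{WCF}). The walls encountered in this range are precisely those phases $\phi$ at which a rank-zero class $(-n_{1},-\beta_{1},0)$, coming from $F[-1]$ for some $F\in\Coh_{\le 1}(X)$ with $\chi(F) = n_{1} > 0$ and $[F] = \beta_{1} > 0$, becomes phase-parallel to the rank-one part of an object in $\aA_{X}$; the contribution of such a wall is the exponential factor $\exp\bigl((-1)^{n_{1}-1} n_{1} N_{n_{1},\beta_{1}} q^{n_{1}} t^{\beta_{1}}\bigr)$. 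Telescoping the wall-crossing formula over all such walls between $\theta_{0}$ and $1/2$ produces exactly the factorization (\ref{PTprod}).

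The main obstacle is to make sense of the resulting (a priori infinite) product of exponentials and to show that the telescoping is legitimate. This is handled by working in the topological vector space of generating functions introduced in~\cite[Section~4]{Tcurve1} and~\cite[Section~4.3]{Tcurve2}: for any fixed curve class $\beta_{0}$ only finitely many pairs $(n_{1},\beta_{1})$ with $\beta_{1} \le \beta_{0}$ contribute to the coefficient of $t^{\beta_{0}}$, so at the level of each curve class the product reduces to a finite product and the telescoping becomes a finite identity. A small additional check is that $\theta = 1/2$ is not itself a wall contributing to the product, which is immediate: the wall condition $-n_{1}+(\omega\cdot\beta_{1})i \in \mathbb{R}_{>0}\,e^{i\pi/2}$ forces $n_{1} = 0$, and such classes are excluded from (\ref{WCF}) by the strict inequality $n > 0$. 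With these points in place, the identity (\ref{PTprod}) is a direct consequence of (\ref{WCF}) applied iteratively between the two endpoints.
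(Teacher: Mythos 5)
Your proposal is correct and follows essentially the same route as the paper: identify $\PT(X)$ with $\DT(\omega,\theta)$ for $\theta$ near $1$ and $\sum_{n,\beta}L_{n,\beta}q^nt^{\beta}$ with $\DT(\omega,1/2)$ via Corollary~\ref{cor:inv}, then telescope the wall-crossing formula (\ref{WCF}) between the two endpoints, with convergence handled in the topological vector space of~\cite{Tcurve1}. Your observation that the phase $\theta=1/2$ itself contributes no wall factor (since $-n_1+(\omega\cdot\beta_1)i\in\mathbb{R}_{>0}e^{i\pi/2}$ forces $n_1=0$) is precisely the point the paper makes to conclude $\lim_{\theta\to 1/2}\DT(\omega,\theta)=\DT(\omega,\theta=1/2)$.
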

\begin{proof}
By Corollary~\ref{cor:inv}, we have 
\begin{align*}
\lim_{\theta \to 1} \DT(\omega, \theta)=\PT(X). 
\end{align*}
On the other hand, 
note that if $F\in \Coh_{\le 1}(X)$ satisfies 
\begin{align*}
Z_{\omega, 1/2}(F[-1]) \in \mathbb{R}_{>0}\sqrt{-1},
\end{align*}
then $\chi(F)=0$. Using this fact and following the 
argument of~\cite[Theorem~5.8, Theorem~8.10]{Tcurve1}, 
it can be checked that
\begin{align*}
\lim_{\theta \to 1/2}\DT(\omega, \theta)&=
\DT(\omega, \theta=1/2), \\
&=\sum_{n, \beta}L_{n, \beta}q^n t^{\beta}. 
\end{align*}
Therefore applying wall-crossing formula (\ref{WCF}) 
from $\theta =1/2$ to $\theta \to 1$, 
we obtain the formula (\ref{PTprod}). 
(See~\cite[Corollary~5.11]{Tcurve1} to justify this argument.)
\end{proof}

\section{Product expansion formula}\label{sec:Prod}
In this section, we discuss a conjectural 
product expansion formula of the series $\PT(X)$, 
and see how it is related to our formula (\ref{PTprod}).  
It leads to a conjectural multi-covering formula 
of the invariant $N_{n, \beta}$, and we will 
give its evidence in a specific example. 
\subsection{Gopakumar-Vafa formula}
For $g\ge 0$ and $\beta \in H_2(X, \mathbb{Z})$, 
the GW invariant $N_{g, \beta}^{\rm{GW}} \in \mathbb{Q}$
is not an integer in general. 
However Gopakumar-Vafa~\cite{GV} claims
the following integrality of $N_{g, \beta}^{\rm{GW}}$, 
based on the string duality between Type IIA
string theory and M-theory. 
\begin{conj}
There are integers 
\begin{align*}
n_{g}^{\beta} \in \mathbb{Z}, 
\mbox{ for }
g \ge 0, \ \beta \in H_2(X, \mathbb{Z}), 
\end{align*}
such that we have 
\begin{align}\label{GWGV}
\sum_{g\ge 0, \beta>0}N_{g, \beta}^{\rm{GW}}
\lambda^{2g-2}t^{\beta}=
\sum_{g\ge 0, \beta>0, k \in \mathbb{Z}_{\ge 1}}
\frac{n_{g}^{\beta}}{k}
\left(2\sin\left(\frac{k\lambda}{2} \right)^{2g-2} \right)t^{k\beta}.
\end{align}
\end{conj}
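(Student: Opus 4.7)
The plan is to approach the Gopakumar-Vafa formula indirectly via the Pandharipande-Thomas side, using the chain of still-conjectural correspondences laid out in the paper. Assuming Conjecture~\ref{conj:GWDT}(ii) together with Conjecture~\ref{conj:DTPT}, we have $\exp\mathrm{GW}(X) = \PT(X)$ after the substitution $q = -e^{i\lambda}$; so establishing (\ref{GWGV}) reduces to showing that $\log\PT(X)$, written in the $\lambda$-variable, assumes the required GV form. The advantage of this reduction is that Theorem~\ref{main:thm} already provides
\begin{align*}
\log\PT(X) = \sum_{n>0,\,\beta>0}(-1)^{n-1} n N_{n,\beta}\, q^n t^\beta + \log\Bigl(\sum_{n,\beta} L_{n,\beta}\, q^n t^\beta\Bigr),
\end{align*}
so the problem becomes a structural question about the two pieces $N_{n,\beta}$ and $L_{n,\beta}$.

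The first sum is expected to carry the genus-zero BPS contribution. Using $-\log(1-x) = \sum_{k\ge 1}x^k/k$ together with a conjectural multi-covering formula for $N_{n,\beta}$ expressing it as a divisor sum over $(n,\beta)$ with integer ``primitive'' coefficients (generalizing the $\beta = 0$ formula (\ref{D0})), the substitution $q = -e^{i\lambda}$ should, via the identity $(1+q)^2/q = (2\sin(\lambda/2))^2$, convert the first sum into $\sum_{\beta,k} \frac{n_0^\beta}{k}(2\sin(k\lambda/2))^{-2} t^{k\beta}$, identifying the primitive integers with the genus-zero BPS numbers $n_0^\beta$. For the higher-genus contributions, one exploits that by Corollary~\ref{cor:inv}(ii) the Laurent polynomial $\sum_n L_{n,\beta}\,q^n$ is symmetric under $q \leftrightarrow q^{-1}$, hence a polynomial in $q + q^{-1} = (2\sin(\lambda/2))^2 - 2$; after an analogous multi-covering correction relating $L_{n,\beta}$ to fundamental integers at divisors of $\beta$, the coefficient of $(2\sin(\lambda/2))^{2g-2}$ would be identified with $n_g^\beta$ for $g \ge 1$.

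The main obstacle is threefold. First, the GW/DT and DT/PT correspondences underlying the reduction are themselves open in the required generality, so the argument is conditional at both ends. Second, the multi-covering formulas for $N_{n,\beta}$ and $L_{n,\beta}$ --- although consistent with the elliptically fibered examples alluded to in the introduction and with the $\beta = 0$ computation (\ref{D0}) --- have no general proof at present; extracting them from the Hall-algebra framework of Sections~\ref{sec:Hall} and \ref{rational} would require a substantially finer understanding of the moduli stack of $Z_\omega$-semistable one-dimensional sheaves than what the current wall-crossing machinery delivers. Third, and most essentially, the \emph{integrality} of the $n_g^\beta$ is the core content of the conjecture and will likely demand a geometric or cohomological definition of BPS invariants (for example via perverse sheaves on the moduli space $M_{n,\beta}(\omega)$) which can then be matched to both the PT side via (\ref{PTprod}) and the GW side via the correspondences above.
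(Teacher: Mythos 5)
This statement is labelled as a conjecture in the paper (the Gopakumar--Vafa integrality conjecture), and the paper does not prove it; so there is no proof to compare yours against. The paper's entire treatment is the remark immediately following the statement: the left-hand side can always be written in the right-hand form with $n_g^\beta\in\mathbb{Q}$, and the integrality would follow from the GW/DT/PT correspondences together with the integrality of PT invariants via \cite[Theorem~3.19]{PT}. Your proposal is, in substance, exactly this conditional reduction, fleshed out along the lines the paper itself pursues in Section~\ref{sec:Prod}: passing to $\log\PT(X)$ via Theorem~\ref{main:thm}, converting the $N_{n,\beta}$-sum into the genus-zero BPS contribution under $q=-e^{i\lambda}$ using $(1+q)^2/q=(2\sin(\lambda/2))^2$, and extracting the $g\ge 1$ contributions from the $q\leftrightarrow q^{-1}$-symmetric polynomial $\sum_n L_{n,\beta}q^n$. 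This is precisely the content of Conjecture~\ref{conj:PTGV}, Theorem~\ref{thm:PTGV} (equivalence with the multi-covering formula, Conjecture~\ref{conj:N}) and Theorem~\ref{thm:higher} (the explicit expression of $n_g^\beta$ in terms of the $L_{n,\beta}$). Your elementary identities are correct, and you correctly identify the three genuinely open inputs: the GW/DT and DT/PT correspondences, the multi-covering formula for $N_{n,\beta}$, and the integrality itself.

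To be clear about status: what you have written is not a proof and cannot be one with the tools in this paper, since Conjecture~\ref{conj:GWDT}(ii) and Conjecture~\ref{conj:N} are open in the required generality. One small refinement worth noting: the paper's route to integrality does not require a new geometric definition of BPS invariants (your third obstacle). Once $\PT(X)$ is written in the product form (\ref{GVform}), \cite[Theorem~3.19]{PT} deduces $n_g^\beta\in\mathbb{Z}$ formally from $P_{n,\beta}\in\mathbb{Z}$; the genuinely missing ingredient on the PT side is only the multi-covering formula (\ref{multi}), which the paper verifies in the Weierstrass-model example but not in general.
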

The invariant $n_{g}^{\beta} \in \mathbb{Z}$
is called a \textit{Gopakumar-Vafa invariant}. 
The LHS of (\ref{GWGV}) can be always 
written as in the RHS of (\ref{GWGV})
for some $n_{g}^{\beta} \in \mathbb{Q}$, 
but the integrality of $n_{g}^{\beta}$ is not obvious. 
The above conjecture is implied by 
GW/DT/PT correspondence, noting that
 DT or PT invariants are integers. 
(cf.~\cite[Theorem~3.19]{PT}.)

Now let us believe GW/DT/PT correspondence and 
write GW generating series in the Gopakumar-Vafa form
(\ref{GWGV}). Then  
the series $\PT(X)$ should be written as a certain 
conjectural formula 
involving $n_{g}^{\beta}$. 
The expected formula is formulated 
in~\cite{Katz2}:
\begin{conj}\label{conj:PTGV}
There are integers 
\begin{align*}
n_{g}^{\beta} \in \mathbb{Z}, \mbox{ for }
g \ge 0, \ \beta \in H_2(X, \mathbb{Z}), 
\end{align*}
such that we have 
\begin{align}\label{GVform}
\PT(X)=\prod_{\beta >0} \left( \prod_{j=1}^{\infty}
(1-(-q)^j t^{\beta})^{j n_{0}^{\beta}}
 \cdot \prod_{g=1}^{\infty}\prod_{k=0}^{2g-2}
(1-(-q)^{g-1-k}t^{\beta})^{(-1)^{k+g}n_{g}^{\beta}
\left(\begin{subarray}{c}
2g-2 \\
k
\end{subarray} \right)} \right). 
\end{align}
\end{conj}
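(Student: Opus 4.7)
The plan is to deduce Conjecture \ref{conj:PTGV} from the product formula (\ref{PTprod}) by matching its two factors with the genus zero and higher genus contributions of the Gopakumar-Vafa expansion (\ref{GVform}) respectively.

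For the exponential factor in (\ref{PTprod}), I would conjecture, and then aim to prove, the multi-covering formula
\begin{align*}
N_{n,\beta}=\sum_{k\ge 1,\, k\mid n,\, k\mid \beta}\frac{1}{k^{2}}\,n_{0}^{\beta/k},\qquad n_{0}^{\beta}\in\mathbb{Z}.
\end{align*}
A direct logarithmic expansion shows that this identity is exactly equivalent to
\begin{align*}
\prod_{n>0,\,\beta>0}\exp\!\bigl((-1)^{n-1}nN_{n,\beta}q^{n}t^{\beta}\bigr)=\prod_{\beta>0}\prod_{j\ge 1}\bigl(1-(-q)^{j}t^{\beta}\bigr)^{j\,n_{0}^{\beta}},
\end{align*}
so the genus zero factor of (\ref{GVform}) is produced automatically. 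Consistency with the analogous $\beta=0$ computation (\ref{D0}) suggests $n_{0}^{0}=-\chi(X)$, in accordance with the standard constant-map GV invariant.

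For the remaining factor, I would use the self-duality $L_{n,\beta}=L_{-n,\beta}$ and the vanishing $L_{n,\beta}=0$ for $|n|\gg 0$ established in Corollary \ref{cor:inv} to view $\sum_{n}L_{n,\beta}q^{n}$ as a Laurent polynomial in $q$ symmetric under $q\leftrightarrow q^{-1}$, with $\sum_{n}L_{n,0}q^{n}=1$. The higher-genus side of (\ref{GVform}), restricted to a single class $\beta$, has exactly the same shape: each factor $\prod_{k=0}^{2g-2}(1-(-q)^{g-1-k}t^{\beta})^{(-1)^{k+g}n_{g}^{\beta}\binom{2g-2}{k}}$ is a symmetric Laurent polynomial in $q$, and higher genera contribute only higher $q$-degrees. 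My plan is therefore to strip off contributions one class $\beta$ at a time, and within each $\beta$ one genus $g$ at a time, inductively determining rational numbers $n_{g}^{\beta}\in\mathbb{Q}$ from the coefficients of $L_{n,\beta}$; the procedure terminates at each $\beta$ precisely because $L_{\cdot,\beta}$ has finite $q$-support.

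The hard part will be the integrality claims $n_{0}^{\beta},n_{g}^{\beta}\in\mathbb{Z}$, equivalently the integrality of the BPS numbers implicit in the multi-cover decomposition of $N_{n,\beta}$. Unlike the preceding steps, which are formal manipulations of generating series, integrality encodes the geometric content of the conjecture and is invisible from the Hall-algebra construction of $N_{n,\beta}$ and $L_{n,\beta}$ alone. A natural route would be to give an intrinsic geometric definition of $n_{g}^{\beta}$ via perverse sheaves on the moduli space $M_{n,\beta}(\omega)$ of stable one-dimensional sheaves, combined with a comparison to the Hilbert scheme of points on the universal curve, in the spirit of Hosono-Saito-Takahashi, Kiem-Li and Maulik-Toda. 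As a first test I would verify the entire scheme on the local super-rigid $(-1,-1)$-curve of Example \ref{exam:doesnot}, where $N_{0,k[C]}=1/k^{2}$ forces $n_{0}^{[C]}=1$ and $n_{g}^{[C]}=0$ for $g\ge 1$, so that (\ref{GVform}) reproduces the rational function $q/(1+q)^{2}$ from (\ref{DTC}) recorded in Example~\ref{ex:DTPT}.
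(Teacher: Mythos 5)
This statement is a conjecture, and the paper does not prove it in general: what the paper actually establishes (Theorem~\ref{thm:PTGV}) is that Conjecture~\ref{conj:PTGV} is \emph{equivalent} to the multi-covering formula for $N_{n,\beta}$ (Conjecture~\ref{conj:N}), via exactly the reduction you describe — take logarithms of (\ref{PTprod}) and (\ref{GVform}), use the uniqueness of the decomposition of a Laurent series into a part in $q\mathbb{Q}\db[q\db]$ and a symmetric Laurent polynomial to match the exponential factor with the genus-zero product (giving (\ref{comp1}), i.e.\ $n_0^{\beta}=N_{1,\beta}$ and the multi-cover relation) and the $L$-factor with the higher-genus product (giving (\ref{comp2}) and the recursive determination of $n_g^{\beta}\in\mathbb{Q}$, made explicit in the paper via the basis $f_g(q)=q^{1-g}(1+q)^{2g-2}$ and M\"obius inversion in Theorem~\ref{thm:higher}). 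So your core reduction coincides with the paper's, and your local $(-1,-1)$-curve check is consistent with it. The one place you take a genuinely different, and unnecessarily hard, route is integrality: you propose constructing $n_g^{\beta}$ geometrically via perverse sheaves on $M_{n,\beta}(\omega)$, whereas the paper gets integrality essentially for free — $n_0^{\beta}=N_{1,\beta}\in\mathbb{Z}$ because the vector $(1,\beta)$ is primitive (so the moduli space is fine and the invariant is a genuine weighted Euler characteristic), and for $g\ge 1$ the paper invokes \cite[Theorem~3.19]{PT}, which says that once $\PT(X)$ admits \emph{some} product expansion of the form (\ref{GVform}) with rational exponents, integrality of all $n_g^{\beta}$ follows formally from $P_{n,\beta}\in\mathbb{Z}$. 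Your geometric route would buy an intrinsic definition of the BPS numbers, but it is not needed for the equivalence, and it does not address the part that is actually open, namely Conjecture~\ref{conj:N} itself; on that point your proposal, like the paper, leaves the conjecture as a conjecture. (A small slip: $n_0^{0}$ does not appear in (\ref{GVform}), since the product is over $\beta>0$; the $\beta=0$ computation (\ref{D0}) is not part of this statement.)
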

The above conjecture is nothing 
but the strong rationality 
conjecture discussed in~\cite{PT}. 
In what follows we discuss the relationship 
between the formulas (\ref{PTprod}) and (\ref{GVform}). 

\subsection{Multi-covering formula of $N_{n, \beta}$}
First let us take the logarithm of the RHS of (\ref{GVform}). 
Then we obtain 
\begin{align}\notag
&\log 
\prod_{\beta>0}\prod_{j=1}^{\infty}
(1-(-q)^j t^{\beta})^{jn_{0}^{\beta}}
\prod_{g=1}^{\infty}\prod_{k=0}^{2g-2}
(1-(-q)^{g-1-k}t^{\beta})^{(-1)^{k+g}n_{g}^{\beta}
\left(\begin{subarray}{c}
2g-2 \\
k
\end{subarray} \right)} \\
&=
\sum_{\beta>0} 
\sum_{j=1}^{\infty}
jn_{0}^{\beta}\log \left(1-(-q)^j t^{\beta}  \right)
 \\
\notag
&\qquad +\sum_{\beta>0}\sum_{g=1}^{\infty}\sum_{k=0}^{2g-2}
(-1)^{k+g}n_{g}^{\beta}
\binom{2g-2}{k}
\log\left(1-(-q)^{g-1-k}t^{\beta} \right) \\
\notag
&=\sum_{\beta>0} 
\sum_{j=1}^{\infty}
jn_{0}^{\beta}\sum_{k\ge 1}\frac{(-1)^{jk-1}q^{jk}}{k}
t^{k\beta} \\
\label{2nd}
&\qquad +\sum_{\beta>0}\sum_{g=1}^{\infty}\sum_{a \ge 1}
\frac{n_g^{\beta}}{a}\sum_{k=0}^{2g-2}
\binom{2g-2}{k}
\left\{-(-q)^a \right\}^{g-1-k}
t^{a\beta}.
\end{align}
The first term of (\ref{2nd})
is written as 
\begin{align}
\sum_{\beta>0} 
\sum_{n=1}^{\infty}
\sum_{k\ge 1, k|(\beta, n)}
\frac{(-1)^{n-1}n}{k^2}
n_{0}^{\beta/k}q^n t^{\beta},
\end{align}
and the coefficient of $t^{\beta}$ 
is an element of $q\mathbb{Q}\db[q\db]$. 
As for the second term of (\ref{2nd}), we set 
\begin{align}\notag
f_{g}(q) &\cneq \sum_{k=0}^{2g-2} \binom{2g-2}{k} q^{g-1-k} \\
\label{def:fg}
&=q^{1-g}(1+q)^{2g-2}.
\end{align}
Then the second term of (\ref{2nd}) is written as 
\begin{align}
\sum_{\beta>0}\label{2ndterm}
\sum_{g=1}^{\infty}
\sum_{\begin{subarray}{c}a \ge 1, 
a|\beta
\end{subarray}}
\frac{n_g^{\beta/a}}{a}f_g(-(-q)^a)t^{\beta}. 
\end{align}
Note that the coefficient of $t^{\beta}$
in (\ref{2ndterm}) is a polynomial of $q^{\pm 1}$
invariant under $q\leftrightarrow 1/q$. 

Next taking the logarithm of (\ref{PTprod}), we obtain 
\begin{align}\label{logPT}
\log \PT(X)
=\sum_{\beta>0}\sum_{n>0}(-1)^{n-1}nN_{n, \beta}q^n t^{\beta}
+\log\left( \sum_{n, \beta}L_{n, \beta}q^n t^{\beta} \right).
\end{align}
The coefficient of $t^{\beta}$ in
the first term of the RHS of (\ref{logPT})
is an element of
 $q\mathbb{Q}\db[q\db]$. 
We set 
\begin{align}\label{logL}
\sum_{\beta>0}L_{\beta}(q)t^{\beta}
\cneq \log\left( \sum_{n, \beta}L_{n, \beta}q^n t^{\beta} \right).
\end{align}
Then 
$L_{\beta}(q)$ is a polynomial of $q^{\pm 1}$
which is 
invariant under $q\leftrightarrow 1/q$. 

For a Laurent series $F(q)$ in $q$, note that the 
decomposition 
\begin{align*}
&F(q)=F_1(q)+F_2(q), \\
& F_1(q) \in q \mathbb{Q}\db[ q \db], 
F_2(q) \in \mathbb{C}[q^{\pm}1],
\end{align*}
is unique if $F_2(q)$ is invariant under $q\leftrightarrow 1/q$. 
Hence 
if Conjecture~\ref{conj:PTGV} holds, 
the comparison of (\ref{2nd}) with (\ref{logPT})
gives 
\begin{align}\label{comp1}
\sum_{n>0}(-1)^{n-1}nN_{n, \beta}q^n
&=
\sum_{n=1}^{\infty}
\sum_{k\ge 1, k|(\beta, n)}
\frac{(-1)^{n-1}n}{k^2}
n_{0}^{\beta/k}q^n, \\
\label{comp2}
L_{\beta}(q) &=
\sum_{g=1}^{\infty}
\sum_{\begin{subarray}{c}a \ge 1, 
a|\beta
\end{subarray}}
\frac{n_g^{\beta/a}}{a}f_g(-(-q)^a).
\end{align}
By looking at the coefficient of $q$ in (\ref{comp1}), 
we obtain 
\begin{align*}
N_{1, \beta}=n_{0, \beta}. 
\end{align*}
Then by looking at the coefficient of $q^n$, 
we obtain the following conjectural formula. 
\begin{conj}\label{conj:N}
We have the following formula, 
\begin{align}\label{multi}
N_{n, \beta}=\sum_{k\ge 1, k|(n, \beta)}
\frac{1}{k^2}N_{1, \beta/k}. 
\end{align}
\end{conj}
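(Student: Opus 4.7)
The strategy is to deduce Conjecture~\ref{conj:N} as a formal consequence of the product formula (\ref{PTprod}) established by Theorem~\ref{main:thm} together with the conjectural Gopakumar-Vafa expansion (\ref{GVform}) of $\PT(X)$. The key tool is the uniqueness of the decomposition $F(q)=F_1(q)+F_2(q)$ with $F_1\in q\mathbb{Q}\db[q\db]$ and $F_2\in\mathbb{C}[q^{\pm 1}]$ symmetric under $q\leftrightarrow 1/q$, as noted just before (\ref{comp1}).

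First I would take the logarithm of (\ref{PTprod}), obtaining (\ref{logPT}): the first summand has $t^\beta$-coefficient $\sum_{n>0}(-1)^{n-1}nN_{n,\beta}q^n\in q\mathbb{Q}\db[q\db]$, while by Corollary~\ref{cor:inv}(ii) the series $L_\beta(q)$ introduced in (\ref{logL}) is a Laurent polynomial invariant under $q\leftrightarrow 1/q$. Next I would take the logarithm of the right-hand side of (\ref{GVform}); expanding each factor via $\log(1-x)=-\sum_{k\ge 1}x^k/k$ and regrouping yields (\ref{2nd}). The first line of (\ref{2nd}) lies in $q\mathbb{Q}\db[q\db]$, and the second line is controlled by $f_g(x)=x^{1-g}(1+x)^{2g-2}$ from (\ref{def:fg}); a direct check gives $f_g(1/x)=f_g(x)$, so each $f_g(-(-q)^a)$ is a symmetric Laurent polynomial in $q$.

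By the uniqueness of the decomposition, matching the $q\mathbb{Q}\db[q\db]$-pieces on the two sides yields (\ref{comp1}), and matching the symmetric Laurent pieces yields (\ref{comp2}). Extracting the coefficient of $q^1$ in (\ref{comp1}) identifies $N_{1,\beta}=n_0^\beta$, since the only divisor of $\gcd(1,\beta)$ is $k=1$. Substituting this identification back into (\ref{comp1}) and comparing the coefficients of $q^n$ for arbitrary $n\ge 1$ produces the claimed multi-covering formula
\begin{align*}
N_{n,\beta}=\sum_{k\ge 1,\, k\mid(n,\beta)}\frac{1}{k^2}N_{1,\beta/k}.
\end{align*}

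The principal obstacle is that the derivation is conditional on Conjecture~\ref{conj:PTGV}: without independently knowing that $\PT(X)$ takes the Gopakumar-Vafa shape, one cannot separate $\log\PT(X)$ into the two canonical pieces and match them term by term. An unconditional proof would appear to require either (a) an independent resolution of Conjecture~\ref{conj:PTGV}, a deep problem tied to GW/DT/PT correspondence, or (b) a direct Hall-algebra attack on the multi-covering structure of $N_{n,\beta}$, in the spirit of Joyce-Song's conjectural multi-covering formula for generalized DT invariants, which would demand a much finer understanding of strictly $Z_\omega$-semistable loci in $\Coh_{\le 1}(X)$ than is available from the wall-crossing techniques used in Section~\ref{rational}.
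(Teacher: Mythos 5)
Your derivation is correct and follows essentially the same route as the paper: the statement is a conjecture, and the paper likewise obtains it only conditionally on Conjecture~\ref{conj:PTGV}, by taking logarithms of (\ref{PTprod}) and (\ref{GVform}), invoking the uniqueness of the decomposition into a $q\mathbb{Q}\db[q\db]$ part and a $q\leftrightarrow 1/q$-symmetric Laurent part to get (\ref{comp1})--(\ref{comp2}), and then matching coefficients of $q^n$ (this is exactly the content of Theorem~\ref{thm:PTGV}, which records the equivalence of the two conjectures). Your closing caveat about the conditional nature of the argument accurately reflects the paper's own position; the only unconditional evidence the paper offers is the verification for fiber classes of the Weierstrass model in Proposition~\ref{prop:Nmult}.
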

By the above argument, if Conjecture~\ref{conj:N} is true, 
then $n_{0}^{\beta} =N_{1, \beta}$ satisfies 
the equation (\ref{comp1}).
Note that $N_{1, \beta}$ is an integer
since the vector $(1, \beta)$ is primitive. 

Also the equation (\ref{comp2})
gives a way to write down $n_{g}^{\beta}$ for $g\ge 1$
in terms of $L_{n, \beta}$. 
Namely if $G(q) \in \mathbb{Q}[q^{\pm 1}]$ is invariant 
under $q\leftrightarrow 1/q$, then there is a unique 
way to write $G(q)$ as
\begin{align*}
G(q)=\sum_{g=1}^{N}a_g f_g(q),
\end{align*}
with $a_g \in \mathbb{Q}$. 
Hence
we are able to write down $n_{g}^{\beta}$ in
terms $L_{n, \beta}$ using the equation (\ref{comp2}) 
recursively. 
For instance, 
as we will see in Theorem~\ref{thm:higher},
 we have 
\begin{align}\label{n1}
n_{1}^{\beta}=\sum_{n}(-1)^n L_{n, \beta}
-\frac{1}{2}\sum_{n_1, n_2}\sum_{\beta_1 +\beta_2=\beta}
(-1)^{n_1+n_2}L_{n_1, \beta_1}L_{n_2, \beta_2}
+ \cdots,
\end{align}
if $\beta$ is a primitive curve class. 
The integrality of $n_{g}^{\beta}$ for $g\ge 1$
is not obvious from the expression
of $n_{g}^{\beta}$ in terms of $L_{n, \beta}$, as in (\ref{n1}).
However by \cite[Theorem~3.19]{PT}, if $\PT(X)$ is once written as 
a product expansion (\ref{GVform}), 
then the integrality of $n_{g}^{\beta}$ 
follows from the integrality of $P_{n, \beta} \in \mathbb{Z}$. 
As a summary, we obtain the following. 
\begin{thm}\label{thm:PTGV}
Conjecture~\ref{conj:PTGV} is equivalent to Conjecture~\ref{conj:N}. 
In that case, we have 
\begin{align*}
n_{0}^{\beta}=N_{1, \beta}, 
\end{align*}
and there is a way to write 
down $n_{g}^{\beta}$ for $g\ge 1$ in terms of $L_{n, \beta}$. 
\end{thm}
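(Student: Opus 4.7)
The plan is to match the coefficient of each $t^\beta$ in $\log \PT(X)$ with the coefficient of $t^\beta$ in the logarithm of the right-hand side of (\ref{GVform}), exploiting the uniqueness of the decomposition of a Laurent series $F(q)=F_1(q)+F_2(q)$ with $F_1\in q\mathbb{Q}\db[q\db]$ and $F_2\in\mathbb{C}[q^{\pm 1}]$ invariant under $q\leftrightarrow 1/q$ recorded just before the theorem. Taking logarithms is already done in the paragraphs preceding the theorem: the logarithm of (\ref{GVform}) splits naturally, with the $n_0^\beta$ factors contributing the $q\mathbb{Q}\db[q\db]$ part and the $n_g^\beta$ factors with $g\ge 1$ contributing, via the identity (\ref{def:fg}) for $f_g$, the symmetric Laurent polynomial part (\ref{2ndterm}). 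The logarithm (\ref{logPT}) of (\ref{PTprod}) has an analogous split, with first summand in $q\mathbb{Q}\db[q\db]$ and second summand $\sum_\beta L_\beta(q)t^\beta$ symmetric by Corollary~\ref{cor:inv}.

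For the direction Conjecture~\ref{conj:PTGV}~$\Rightarrow$~Conjecture~\ref{conj:N}, the two decompositions of $\log\PT(X)$ must agree summand by summand, yielding the paired identities (\ref{comp1}) and (\ref{comp2}). Reading off the coefficient of $q$ in (\ref{comp1}) gives $n_0^\beta=N_{1,\beta}$, and feeding this back in and comparing coefficients of $q^n t^\beta$ produces the multi-covering relation (\ref{multi}). Conversely, assuming Conjecture~\ref{conj:N}, I would set $n_0^\beta:=N_{1,\beta}$, for which (\ref{comp1}) becomes a formal identity, so the $q\mathbb{Q}\db[q\db]$-parts of the two logarithms match. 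The remaining equation (\ref{comp2}) is then solved inductively on the divisibility lattice of $\beta$: for primitive $\beta$ only the $a=1$ term survives, and since the polynomials $\{f_g(q)\}_{g\ge 1}$ form a $\mathbb{Q}$-basis of the space of Laurent polynomials symmetric under $q\leftrightarrow 1/q$ (as one sees by inspecting the top-degree terms), the rationals $n_g^\beta$ are uniquely determined; nonprimitive $\beta$ are handled by peeling off the already-determined contributions from $n_g^{\beta/a}$ with $a>1$. Exponentiating then yields (\ref{GVform}) with these rational exponents.

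The main obstacle is the integrality of the $n_g^\beta$ for $g\ge 1$: the recursive inversion of (\ref{comp2}) produces them as $\mathbb{Q}$-linear combinations of the $L_{n,\beta/a}$ with manifest denominators $a$, and direct integrality is far from evident, as the illustrative formula (\ref{n1}) for $n_1^\beta$ already suggests. To overcome this I would appeal to~\cite[Theorem~3.19]{PT}: once (\ref{GVform}) has been established with a priori rational exponents, the integrality $P_{n,\beta}\in\mathbb{Z}$ of the PT invariants on the left-hand side forces the integrality of the exponents $n_g^\beta$ on the right. This upgrades the rationals produced in the previous step to integers and completes the reverse implication, yielding the claimed equivalence together with the identification $n_0^\beta=N_{1,\beta}$ and the effective procedure expressing $n_g^\beta$ ($g\ge 1$) in terms of the $L_{n,\beta}$.
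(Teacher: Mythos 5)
Your proposal is correct and follows essentially the same route as the paper: take logarithms of both (\ref{GVform}) and (\ref{PTprod}), use the uniqueness of the decomposition into a $q\mathbb{Q}\db[q\db]$ part and a symmetric Laurent polynomial part to obtain (\ref{comp1}) and (\ref{comp2}), read off $n_0^\beta=N_{1,\beta}$ and the multi-covering formula in one direction, and in the other direction invert (\ref{comp2}) recursively using that the $f_g$ span the symmetric Laurent polynomials, with integrality of the $n_g^\beta$ supplied by \cite[Theorem~3.19]{PT}. No substantive differences from the paper's argument.
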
 
\begin{rmk}
The invariant $N_{1, \beta}$ is nothing 
but Katz's definition of genus zero Gopakumar-Vafa
invariant~\cite{Katz}. 
\end{rmk}
\subsection{Higher genus Gopakumar-Vafa invariants}
As we observed in Theorem~\ref{thm:PTGV}, 
if we assume Conjecture~\ref{conj:PTGV},
then $n_{g}^{\beta}$ is written in 
terms of $L_{n, \beta}$. 
The purpose of this subsection is to give
its explicit formula. 

For $m\ge 0$, we set $h_m(q)$ by 
\begin{align*}
h_m(q)=\left\{
\begin{array}{cc}
1, & m=0, \\
q^m +q^{-m}, & m\ge 1.
\end{array}   \right. 
\end{align*} 
Let $f_{g}(q)$ be the function defined 
by (\ref{def:fg}).
Then for $g\ge 1$, we have 
\begin{align}\label{f=h}
f_{g}(q)=\sum_{m=0}^{g-1}\binom{2g-2}{g-1+m}
h_m(q).
\end{align}
There is an inversion formula of (\ref{f=h}). 
Namely there are
$c_{g}^{(m)} \in \mathbb{Z}$ such that 
\begin{align}\label{h=c}
h_m(q)=\sum_{g=1}^{m+1} c_g^{(m)} f_g(q). 
\end{align}
An elementary calculation shows that 
$c_g^{(m)}$ is given by 
\begin{align}\label{elemental}
c_g^{(m)}=(-1)^{m+g-1}
\left\{ \binom{m+g}{2g-1} -\binom{m+g-2}{2g-1} \right\}.
\end{align}
The M$\ddot{\rm{o}}$bius function 
on $\mathbb{Z}_{\ge 1}$
is defined as follows:
\begin{align*}
\mu(n)=\left\{ 
\begin{array}{cc}
(-1)^{\omega(n)}, & \mbox{ if } n \mbox{ is square free }, \\
0, & \mbox{ otherwise. }
\end{array}
\right.
\end{align*}
Here $\omega(n)$ is the number of 
distinct prime factors of $n$. 
Then by (\ref{comp2}) and 
the M$\ddot{\rm{o}}$bius
inversion formula, we have 
\begin{align}\label{Minv}
\sum_{g \ge 1}n_g^{\beta}
f_g(q) =\sum_{a\ge 1, a|\beta}\frac{\mu(a)}{a}
L_{\beta/a}(-(-q)^a).
\end{align}
If we write 
\begin{align}\label{Lprime}
L_{\beta}(q)=\sum_{n, \beta}L_{n, \beta}'q^n,
\end{align}
for $L_{n, \beta}' \in \mathbb{Q}$, then 
we have 
\begin{align*}
(\ref{Minv}) &=
\sum_{a\ge 1, a|\beta} \frac{\mu(a)}{a}
\sum_{n \in \mathbb{Z}} L_{n, \beta/a}' (-1)^{na+n}q^{na} \\
&=\sum_{a\ge 1, a|\beta} \frac{\mu(a)}{a}
\sum_{n\ge 0}(-1)^{na+n}L_{n, \beta/a}' h_{na} \\
&=\sum_{n \ge 0}
\sum_{a\ge 1, a|(n, \beta)}
\frac{\mu(a)}{a}(-1)^{n+n/a}L_{n/a, \beta/a}' h_n \\
&= \sum_{g \ge 1} \left(
\sum_{n\ge g-1} \sum_{a\ge 1, a|(n, \beta)}
\frac{\mu(a)}{a}(-1)^{n+n/a}L_{n/a, \beta/a}' c_g^{(n)} \right) f_g(q).
\end{align*}
Here we have used (\ref{h=c}) for the last equality. 
On the other hand, comparing (\ref{logL}) with (\ref{Lprime}), 
we have 
\begin{align*}
L_{n, \beta}'=\sum_{l \ge 1}\frac{(-1)^{l-1}}{l}
\sum_{\begin{subarray}{c}
n_1 + \cdots +n_l =n, \\
\beta_1 + \cdots +\beta_l=\beta
\end{subarray}}
\prod_{i=1}^{l} L_{n_i, \beta_i}. 
\end{align*}
Also using the formula (\ref{elemental}) for $c_g^{(n)}$, 
we obtain the following result. 
\begin{thm}\label{thm:higher}
Suppose that Conjecture~\ref{conj:PTGV} is 
true. Then $n_{0}^{\beta}=N_{1, \beta}$
and $n_{g}^{\beta}$ for $g\ge 1$ is given by 
\begin{align*}
n_{g}^{\beta}=
\sum_{\begin{subarray}{c}
n\ge g-1, \\
a\ge 1, a|(n, \beta)
\end{subarray}}
\sum_{\begin{subarray}{c}
l\ge 1, \\
n_1 + \cdots +n_l =n/a, \\
\beta_1 + \cdots +\beta_l=\beta/a
\end{subarray}}
\frac{\mu(a)}{al}(-1)^{l+g+n/a}
\left\{\binom{n+g}{2g-1}-\binom{n+g-2}{2g-1}  \right\}. 
\prod_{i=1}^{l}L_{n_i, \beta_i}.
\end{align*}
\end{thm}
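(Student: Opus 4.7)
The plan is to invert the relation (\ref{comp2}) in two stages: a Möbius inversion along the divisor lattice of $\beta$ to isolate the diagonal contribution, then a change of basis from $\{h_m\}_{m\ge 0}$ to $\{f_g\}_{g\ge 1}$ to extract the individual Gopakumar-Vafa invariants, and finally a logarithmic expansion to convert $L'_{n,\beta}$ back to $L_{n,\beta}$.

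First I would note that (\ref{Minv}) is already obtained in the excerpt by standard Möbius inversion applied to (\ref{comp2}) over divisors of $\beta$; this is the starting point. Next I expand the right-hand side of (\ref{Minv}) explicitly. Using (\ref{Lprime}) and the identity $(-(-q)^a)^n = (-1)^{n(a+1)} q^{na} = (-1)^{n+na}q^{na}$, after the change of variable $N = na$ (so that the divisibility $a \mid N$ matches the divisibility $a \mid \beta$), I collect monomials into symmetric $h_N(q)$-terms:
\begin{align*}
\sum_{g\ge 1} n_g^\beta f_g(q)
&= \sum_{N\ge 0}\left(\sum_{a\ge 1,\, a\mid (N,\beta)} \frac{\mu(a)}{a}\,(-1)^{N+N/a}\,L'_{N/a,\,\beta/a}\right) h_N(q).
\end{align*}

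Second, I apply the inversion formula (\ref{h=c}), substitute $h_N(q) = \sum_{g=1}^{N+1} c_g^{(N)} f_g(q)$, interchange the order of summation, and use the linear independence of $\{f_g(q)\}_{g\ge 1}$ (which follows from comparing the widths of supports in $q$) to match coefficients of $f_g(q)$:
\begin{align*}
n_g^\beta = \sum_{N\ge g-1}\,\sum_{a\ge 1,\, a\mid (N,\beta)} \frac{\mu(a)}{a}\,(-1)^{N+N/a}\,c_g^{(N)}\,L'_{N/a,\,\beta/a}.
\end{align*}
Plugging in the explicit formula (\ref{elemental}), namely $c_g^{(N)} = (-1)^{N+g-1}\bigl\{\binom{N+g}{2g-1} - \binom{N+g-2}{2g-1}\bigr\}$, produces a sign $(-1)^{N+N/a}\cdot(-1)^{N+g-1} = (-1)^{N/a+g-1}$, simplifying the summand.

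Third, I substitute the expansion of $L'_{n,\beta}$ coming from (\ref{logL}), which contributes an extra $(-1)^{l-1}/l$ and the product $\prod_i L_{n_i,\beta_i}$ over ordered decompositions $(n_i,\beta_i)$ of $(n/a,\beta/a)$ into $l$ parts. Combining the two signs yields $(-1)^{N/a+g-1}\cdot(-1)^{l-1} = (-1)^{N/a+g+l}$. Renaming $N$ to $n$ gives exactly the formula in the statement.

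The whole argument is bookkeeping once the three inversion tools are in place, and the only delicate point is carrying four separate sources of signs (Möbius, the power $(-(-q)^a)^n$, the formula for $c_g^{(N)}$, and the logarithm expansion) through the composition without error; I would handle this by tabulating each sign separately before combining, as above.
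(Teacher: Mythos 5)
Your proof is correct and follows essentially the same route as the paper: Möbius inversion of (\ref{comp2}) to get (\ref{Minv}), reindexing $N=na$ and regrouping into the symmetric functions $h_N$, the change of basis (\ref{h=c}) to extract coefficients of $f_g$, and finally the logarithmic expansion converting $L'_{n,\beta}$ to products of $L_{n_i,\beta_i}$, with the same sign bookkeeping. No substantive difference from the paper's argument.
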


\subsection{Example: Weierstrass model}
We will prove Conjecture~\ref{conj:PTGV} 
and compute $n_{g, \beta}$
in the following specific example.  
Let $S$ be a smooth projective del-Pezzo 
surface over $\mathbb{C}$. 
Take general elements, 
\begin{align*}
f \in \Gamma(S, \oO_S(-4K_S)), \quad 
g \in \Gamma(S, \oO_S(-6K_S)). 
\end{align*}
We construct a Calabi-Yau 3-fold with 
an elliptic fibration, 
\begin{align*}
\pi \colon X \to S, 
\end{align*}
by the defining equation 
\begin{align*}
y^2 =x^3 +fx +g,
\end{align*}
in the projective bundle, 
\begin{align*}
\mathcal{P}roj \mathrm{Sym}_{S}^{\bullet}
(\oO_{S} \oplus \oO_S(-2K_S) \oplus \oO_S(-3K_S)) \to S. 
\end{align*}
Here $x$ and $y$ are local sections of 
$\oO_S(-2K_S)$ and $\oO_S(-3K_S)$ respectively. 
A Calabi-Yau 3-fold $X$ constructed in this way is called 
a \textit{Weierstrass model}. 
A general fiber of $\pi \colon X \to S$ is 
a smooth elliptic curve, and any 
singular fiber is either a nodal or 
cuspidal plane curve. 

Let $F \subset X$ be a general fiber of $\pi$. 
We study the following series, 
\begin{align*}
\PT(X/S) \cneq \sum_{n, m}
\PT_{n, m[F]}q^n t^{m}. 
\end{align*}
By the formula (\ref{PTprod}), we have the 
product expansion formula, 
\begin{align}\label{PTXS}
\PT(X/S) =\prod_{n>0, m>0}
\exp\left( (-1)^{n-1}n N_{n, m[F]} q^n t^{m}\right) \left( \sum_{n, m}
L_{n, m[F]}q^n t^{m} \right).
\end{align}
In what follows, we omit $[F]$ in the notation for simplicity.
So for instance, we write $N_{n, m[F]}$ as $N_{n, m}$. 
\begin{prop}\label{prop:Nmult}
The invariant $N_{n, m}$ satisfies the formula (\ref{multi}), and 
\begin{align*}
N_{1, m}=-\chi(X). 
\end{align*}
\end{prop}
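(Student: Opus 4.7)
The plan is to prove both assertions simultaneously by using derived autoequivalences of $D^b\Coh(X)$ coming from the elliptic fibration to identify the moduli of $Z_\omega$-semistable objects of fiber class $(n,m[F])$ with the moduli of zero-dimensional length-$h$ sheaves on $X$, where $h=\gcd(n,m)$; the conclusion then reduces to the known computation (\ref{D0}) of $N_{h,0}$.

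More precisely, let $\sigma\colon S\to X$ be the zero section of $\pi$ and let $\mathcal{P}$ denote the normalized relative Poincar\'e sheaf on $X\times_S X$. I will use the relative Fourier--Mukai transform
\[
\Phi=\dR p_{2\ast}\bigl(p_{1}^{\ast}(-)\dotimes\mathcal{P}\bigr)\colon D^b\Coh(X)\xrightarrow{\sim}D^b\Coh(X),
\]
which is a well-known autoequivalence for Weierstrass fibrations, together with the twist autoequivalence $T_\sigma=(-)\otimes\oO_X(\sigma)$. A direct Grothendieck--Riemann--Roch calculation, using that $\chi(\oO_{F_s})=0$ for smooth elliptic fibers $F_s=\pi^{-1}(s)$, shows that on numerical classes of the form $(n,m[F])$ (and $(n,0)$) these act by
\[
T_\sigma\colon (n,m)\mapsto(n+m,m),\qquad\Phi\colon (n,m)\mapsto(-m,n)
\]
(up to a homological shift). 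The matrices $\begin{pmatrix}1&1\\0&1\end{pmatrix}$ and $\begin{pmatrix}0&-1\\1&0\end{pmatrix}$ generate $SL_2(\mathbb{Z})$, which acts transitively on vectors in $\mathbb{Z}^2$ of fixed gcd. Hence there is a composition $\Psi$ of $\Phi^{\pm 1}$, $T_\sigma^{\pm 1}$, and a shift that sends the class $(n,m[F])$ to the zero-dimensional class $(h,0)\in\mathbb{Z}\oplus H_2(X,\mathbb{Z})$.

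The main obstacle is then to verify that $\Psi$ induces an identification of counting invariants $N_{n,m[F]}=N_{h,0}$. This breaks into two pieces: (a) $\Psi$ sends the moduli substack of $Z_\omega$-semistable fiber-supported objects in $\Coh_{\le 1}(X)$ to the moduli substack of length-$h$ zero-dimensional sheaves (all of which are automatically $Z_{\omega'}$-semistable for any $\omega'$), possibly after a shift; and (b) $\Psi$ is compatible with the Behrend-function weighting in the Hall-algebra formula (\ref{log}) defining $\epsilon_{n,\beta}(\omega)$. For (a), stable fiber-supported sheaves on smooth fibers satisfy a definite $\mathrm{WIT}_i$-condition with respect to the fiberwise Fourier--Mukai, so $\Phi$ applied to them lands, up to shift, back in $\Coh_{\le 1}(X)$; the singular fibers form a lower-dimensional locus in $S$ and contribute controlled correction terms that are handled by the fiber-independence statement Lemma~\ref{Nomega}. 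For (b), the Behrend function is intrinsic to the analytic-local structure of the moduli stack through its Milnor-fiber description (\ref{nu}), and hence invariant under derived autoequivalences of the Calabi-Yau 3-category; this is the content of the Behrend-function identities of~\cite{JS}, with the shifts contributing only the sign that already appears in the $SL_2(\mathbb{Z})$-action described above.

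Once $N_{n,m[F]}=N_{h,0}$ is established, formula (\ref{D0}) immediately gives
\[
N_{n,m[F]}=N_{h,0}=-\chi(X)\sum_{k\mid h}\frac{1}{k^{2}}.
\]
Specializing to $n=1$ forces $h=1$ and yields $N_{1,m[F]}=-\chi(X)$, proving the second assertion of the proposition. Substituting this back, the general identity becomes
\[
N_{n,m[F]}=\sum_{k\mid(n,m)}\frac{1}{k^{2}}\cdot(-\chi(X))=\sum_{k\mid(n,m)}\frac{1}{k^{2}}\,N_{1,(m/k)[F]},
\]
which is exactly the multi-covering formula (\ref{multi}) restricted to $\beta=m[F]$.
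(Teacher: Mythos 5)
Your overall strategy---transporting the fiber class $(n,m[F])$ to a zero-dimensional class by relative Fourier--Mukai transforms and then invoking the known formula (\ref{D0})---is the same as the paper's, which instead uses a single equivalence $D^b\Coh(X)\stackrel{\sim}{\to}D^b\Coh(Y)$ onto the relative moduli space $Y$ of stable sheaves of the primitive class $(n/h,(m/h)[F])$, sending those stable sheaves to skyscrapers $\oO_y$. Your endpoint $N_{n,m}=N_{h,0}=-\chi(X)\sum_{k\mid h}k^{-2}$ and the deduction of (\ref{multi}) from it are correct. The problem lies in how you justify the key identification of semistable moduli stacks.

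First, your appeal to Lemma~\ref{Nomega} to ``handle'' the singular fibers is a non sequitur: that lemma asserts that $N_{n,\beta}(\omega)$ is independent of the polarization $\omega$, and says nothing about the behaviour of the transform over the nodal and cuspidal fibers. Those fibers contribute honest closed points of $\mM^{s}_{n,m}(\omega_X)$ (a stable sheaf can be supported entirely on one singular fiber), so they are not a negligible correction term; the preservation of stability there is precisely what the classification of~\cite{BBDG} and the results of~\cite{B-M2} supply. Second, and more seriously, you never establish the fact that drives the reduction at the level of the Hall-algebra element $\epsilon_{n,m}(\omega)$: that stable fiber-supported sheaves exist only when $(n,m)$ is coprime, so that every semistable object of class $(n,m[F])$ has all Jordan--H\"older factors of class $(n/h,(m/h)[F])$ and is therefore sent by your $\Psi$ to an honest length-$h$ zero-dimensional sheaf rather than to a complex with cohomology in several degrees. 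This emptiness statement (the paper's (\ref{class}), deduced from~\cite{BBDG}) is what lets one match the full semistable stack $\mM_{n,m}(\omega_X)$ with $\mM_{h,0}$, not merely the stable locus over smooth fibers; without it the claimed identity $N_{n,m}=N_{h,0}$, and hence the proposition, remains unproven.
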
 
\begin{proof}
Let $\omega_X$ be an ample divisor on $X$. 
Let 
\begin{align*}
\mM_{n, m}^{s}(\omega_X) \subset \mM_{n, m}(\omega_X),
\end{align*}
be the substack corresponding to 
$Z_{\omega_X}$-stable 
objects in $\Coh_{\le 1}(X)$, 
introduced in Example~\ref{exam:stab} (iii). 
Note that if $E \in \Coh_{\le 1}(X)$ represents a closed point 
of $\mM_{n, m}^{s}(\omega_X)$, then 
$E$ is 
written as 
\begin{align}\label{include}
E\cong i_{p \ast} E',
\end{align}
for some stable sheaf $E'$
on an elliptic fiber
$\pi^{-1}(p)$ for some 
$p \in S$. 
Here $i_{p} \colon \pi^{-1}(p) \hookrightarrow 
X$ is the inclusion. 
By the classification of stable sheaves on 
the fibers of $\pi$ given in~\cite{BBDG}, we have
\begin{align}\label{class}
\mM_{n, m}^{s}(\omega_X)= \emptyset, \quad 
\mbox{ if } \mathrm{g.c.d.}(n, m)>1. 
\end{align}
Assume that $\mathrm{g.c.d.}(n, m)=1$. 
Let 
\begin{align*}
Y \to S, 
\end{align*}
be the relative moduli space of $Z_{\omega_X}$-stable 
sheaves $E$ on the fibers of $\pi \colon X \to S$, satisfying 
\begin{align}\label{Echi}
[E]=m[F], \quad \chi(E)=n. 
\end{align}
By
the condition $\mathrm{g.c.d.}(n, m)=1$
and the result of~\cite{B-M2},
the variety $Y$ is smooth projective, irreducible, and 
there is a derived equivalence, 
\begin{align}\label{Dequiv}
\Phi \colon D^b \Coh(X) \stackrel{\sim}{\to}
D^b \Coh(Y), 
\end{align}
which takes any $Z_{\omega_X}$-stable sheaf
satisfying (\ref{Echi}) to an object of 
the form $\oO_y$ for a closed point $y\in Y$.  
For $d \in \mathbb{Z}_{\ge 1}$, take a $\mathbb{C}$-valued point, 
\begin{align*}
[E] \in \mM_{(dn, dm)}(\omega_X). 
\end{align*}
By (\ref{class}),
any Jordan-H$\ddot{\rm{o}}$lder factor of $E$ 
determines a closed point in $\mM_{n, m}(\omega_X)$.
Hence the equivalence $\Phi$ induces the isomorphism, 
\begin{align*}
\mM_{(dn, dm)}(\omega_X) \stackrel{\sim}{\to}
\mM_{(d, 0)}(\omega_Y). 
\end{align*} 
 Here $\omega_Y$ is an arbitrary polarization on $Y$. 
(Obviously the RHS does not depend on $\omega_Y$.)
Therefore we obtain that 
\begin{align*}
N_{dn, dm}(\omega_X) &= N_{d, 0}(\omega_Y) \\
&= -\chi(Y)\sum_{k\ge 1, k|d}\frac{1}{k^2}, \\
&=-\chi(X) \sum_{k\ge 1, k|d}\frac{1}{k^2}.
\end{align*}
Here the second equality follows from (\ref{D0}) 
and the last equality follows from the derived 
equivalence (\ref{Dequiv}).
Therefore we obtain the desired result. 
\end{proof}
Next we compute the invariants 
$L_{n, m}$. 
\begin{prop}\label{prop:L}
We have 
$L_{n, m}=0$ for $n\neq 0$, and 
\begin{align*}
L_{0, m}=\chi(\Hilb_m(S)). 
\end{align*}
Here $\Hilb_m(S)$ is the Hilbert scheme of 
$m$-points in $S$. 
\end{prop}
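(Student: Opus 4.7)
The plan is to exhibit the moduli stack of $Z_{\omega,1/2}$-semistable objects with the prescribed class directly as (a gerbe over) $\Hilb_m(S)$, by using the elliptic fibration $\pi$. Recall that $L_{n,m}=\DT_{n,m[F]}(\omega,\theta=1/2)$ is defined by the Hall algebra construction applied to the moduli stack $\widehat{\mM}_{n,m[F]}(\omega,1/2)$ of $Z_{\omega,1/2}$-semistable $E\in\aA_X$ with $\cl(E)=(-n,-m[F],1)$.

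First I would construct the expected candidates. For each $Z\in\Hilb_m(S)$, set $W\cneq\pi^{-1}(Z)\subset X$. Since $\pi$ is flat, $W$ is a pure one-dimensional Cohen--Macaulay scheme with $[W]=m[F]$, and a Leray computation using $\chi(\oO_{F_p})=0$ on every fiber gives $\chi(\oO_W)=0$. Hence $\ch(I_W)=(1,0,-m[F],0)$ and the triangle $\oO_W[-1]\to I_W\to\oO_X$ shows $I_W\in\aA_X$ with $\cl(I_W)=(0,-m[F],1)$. To verify $Z_{\omega,1/2}$-stability, I would use the criterion recalled after Lemma~\ref{lem:compare}: any strict sub $F[-1]\hookrightarrow I_W$ in $\aA_X$ factors through $\oO_W[-1]$, and since $\oO_W$ is pure one-dimensional all its subsheaves $F$ are either of the form $\oO_{W'}$ for $W'=\pi^{-1}(Z')$ with $Z'\subset Z$, or ideals inside $\oO_W$ whose cokernel is either zero-dimensional or a similar fiber-type sheaf; in all cases $\chi(F)\le 0$. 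The dual check for strict quotients follows similarly by using that rank-one strict subs give rise to rank-zero quotients $G'[-1]$ with $G'$ a pushforward from the fibers, hence $\chi(G')\ge 0$.

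The harder converse direction is to show that every $Z_{\omega,1/2}$-semistable $E$ with $\cl(E)=(-n,-m[F],1)$ is of the above form, which in particular forces $n=0$. For this I would invoke the relative Fourier--Mukai transform $\Phi\colon D^b\Coh(X)\simeq D^b\Coh(Y)$ with $Y\cong X$ (recovered via the Weierstrass section), as in the proof of Proposition~\ref{prop:Nmult}. Under $\Phi$, degree-zero line bundles on fibers correspond to point sheaves and $\oO_X$ goes to a sheaf supported on the section $\iota(S)\subset Y$, so the class $(1,0,-m[F],-n)$ transforms into a class that can be realized by a semistable object on $Y$ only when $n=0$, in which case the representing object is $\iota_\ast\oO_Z$ for a $Z\in\Hilb_m(S)$. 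This identifies $\widehat{\mM}_{n,m[F]}(\omega,1/2)\cong[\Hilb_m(S)/\mathbb{G}_m]$ when $n=0$ and $\widehat{\mM}_{n,m[F]}(\omega,1/2)=\emptyset$ otherwise.

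Finally, when $n=0$ the moduli space $\Hilb_m(S)$ is smooth of (even) dimension $2m$, so its Behrend function is identically $(-1)^{2m}=1$, and since the above candidates are $Z_{\omega,1/2}$-stable there are no strictly semistable contributions, whence $\widehat{\epsilon}_{0,m[F]}(\omega,1/2)=\widehat{\delta}_{0,m[F]}(\omega,1/2)$ and $L_{0,m}=\chi(\Hilb_m(S))$. The \emph{main obstacle} will be the converse step: controlling the Fourier--Mukai image of the $Z_{\omega,1/2}$-semistable objects precisely enough to rule out $n\neq 0$ contributions and to show that no exotic semistable objects appear. This requires a careful analysis of the compatibility between the $\theta=1/2$ stability and the FM transform, along the lines of~\cite{Tolim2}, together with a Harder--Narasimhan argument to exhaust all possible extensions.
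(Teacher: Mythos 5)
Your construction of the candidate objects $I_W$ for $W=\pi^{-1}(Z)$ is reasonable, but the argument collapses at the last step, and in a way that cannot be patched within your framework. At $\theta=1/2$ one has $\arg Z_{\omega,1/2}(\oO_X)=\pi/2=\arg Z_{\omega,1/2}(F[-1])$ for \emph{every} one-dimensional sheaf $F$ with $\chi(F)=0$ and $[F]\neq 0$. Hence the exact sequence $0\to\oO_W[-1]\to I_W\to\oO_X\to 0$ in $\aA_X$ has sub and quotient of equal phase, so $I_W$ is only strictly semistable, never stable, at $\theta=1/2$. Worse, objects such as $\oO_X\oplus L[-1]$, with $L$ a non-trivial degree-zero stable sheaf on a fiber, are also $Z_{\omega,1/2}$-semistable of class $(0,-m[F],1)$, so $\widehat{\mM}_{0,m[F]}(\omega,1/2)$ is strictly larger than $[\Hilb_m(S)/\mathbb{G}_m]$, and your final identity $\widehat{\epsilon}_{0,m[F]}(\omega,1/2)=\widehat{\delta}_{0,m[F]}(\omega,1/2)$ is false. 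The whole reason $L_{n,\beta}$ is defined through the Hall-algebra logarithm is that $\theta=1/2$ sits \emph{on} a wall where essentially everything is strictly semistable; a direct enumeration of stable objects there is not available. The Fourier--Mukai step you flag as the ``main obstacle'' is also not merely technical: the transform does not visibly preserve $Z_{\omega,1/2}$-stability, and your class-counting heuristic does not exclude the semistable objects just described.

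The paper's proof avoids the wall entirely and is essentially indirect. It first shows $P_{n,m}=0$ for $n<0$ by a Harder--Narasimhan/Jordan--H\"older argument on the sheaf $E$ underlying a stable pair supported on fibers: since $s$ is surjective in dimension one, the composition $\oO_X\to E\to F_N$ onto the last stable factor $F_N=i_{p\ast}F_N'$ is non-zero, so $\Hom(\oO_X,F_N)\neq 0$, forcing $\arg Z_{\omega}(F_N)\ge\pi/2$ and hence $\chi(E)\ge 0$. Feeding this into the already-established product formula (\ref{PTXS}), together with the self-duality $L_{n,m}=L_{-n,m}$, yields $L_{n,m}=0$ for all $n\neq 0$; comparing $q^0$-coefficients then gives $L_{0,m}=P_{0,m}$. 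Finally $P_0(X,m)\cong\Hilb_m(S)$ is proved on the stable-pairs side (where no strictly semistable objects occur) by showing that $\chi(E)=0$ forces every Jordan--H\"older factor to be $\oO_{X_p}$ and $\Cok(s)=0$, so the pair is pulled back from $\oO_S\twoheadrightarrow\oO_W$ for a length-$m$ subscheme $W\subset S$. To salvage your approach you would have to carry out the full $\epsilon$-logarithm computation at $\theta=1/2$, which is considerably harder than the paper's detour through $P_{0,m}$.
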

\begin{proof}
Let us take 
an ample divisor $\omega$ on $X$ and a 
 stable pair 
\begin{align}\label{spair}
s\colon \oO_X \to E, 
\end{align}
with $E$ supported on fibers of $\pi$. 
By taking the Harder-Narasimhan 
filtration and Jordan-H$\ddot{\rm{o}}$lder
filtration
with respect to $Z_{\omega}$-stability, 
(cf.~Example~\ref{exam:stab} (iii),)
 we can take a 
filtration of $E$, 
\begin{align*}
0=E_0 \subset E_1 \subset E_2 \subset \cdots \subset E_N=E, 
\end{align*}
such that each $F_i=E_i/E_{i-1}$ is $Z_{\omega}$-stable with 
\begin{align}\label{ineqF}
\arg Z_{\omega}(F_i) \ge \arg Z_{\omega}(F_{i+1}), 
\end{align}
for all $i$. 
Note that each $F_j$ is written as $i_{p\ast}F_j'$
for a stable sheaf $F_j'$ on $\pi^{-1}(p)$
as in (\ref{include}). 
Also the composition, 
\begin{align*}
\oO_X \stackrel{s}{\to} E \to E/E_{N-1}=F_N, 
\end{align*}
should be non-zero since $s$ is surjective in dimension one. 
Therefore 
\begin{align*}
\Hom_X(\oO_X, F_N) \cong \Hom_{X_p}(\oO_{X_p}, F_N) \neq 0,
\end{align*} 
which implies
that 
\begin{align*}
\arg Z_{\omega}(F_N) \ge \arg Z_{\omega}(\oO_{X_p})=\pi/2.
\end{align*}
Combined with
 the inequality (\ref{ineqF}), we conclude that $\chi(E) \ge 0$. 

The above argument shows that $P_n(X, m)$ is empty 
for $n<0$, hence $P_{n, m}=0$ for $n<0$. 
By the formula (\ref{PTXS})
and the symmetry $L_{n, m}=L_{-n, m}$, we conclude 
\begin{align*}
L_{n, m}=0, \quad \mbox{ if } n\neq 0. 
\end{align*} 
Let us compute $L_{0, m}$. By substituting $q=0$ 
in the formula (\ref{PTXS}), we have 
\begin{align}\label{L=P}
L_{0, m}=P_{0, m}. 
\end{align}
Suppose that a stable pair (\ref{spair}) 
satisfies $\chi(E)=0$. 
Then the above argument shows that 
$F_N \cong \oO_{X_p}$, and we obtain a morphism 
\begin{align*}
I_{p} \to E_{N-1}, 
\end{align*}
which is surjective in dimension one. 
Here $I_{p}$ is the ideal sheaf of 
$\pi^{-1}(p)$.  
Repeating the above argument, we 
see that 
\begin{align}\label{pair:cond}
F_i \cong \oO_{X_p}, \ \Cok(s)=0,
\end{align}  
for all $i$. 
It is easy to see that a pair (\ref{spair})
satisfying the property (\ref{pair:cond})
is obtained by the pull-back, 
\begin{align*}
\oO_S \twoheadrightarrow \oO_W, 
\end{align*}
for a zero dimensional subscheme
$W \subset S$ of length $m$. 
Therefore we have the isomorphism, 
\begin{align*}
P_{0}(X, m) \cong \Hilb_m(S),
\end{align*} 
and 
\begin{align*}
P_{0, m} =\chi(\Hilb_m(S)). 
\end{align*}
Combined with (\ref{L=P}), we obtain the 
desired result. 
\end{proof}
Combining the above two proposition, 
we obtain the following theorem. 
\begin{thm}
We have the following formula, 
\begin{align}\label{GVform2}
\PT(X/S)=\prod_{m\ge 1, j\ge 1}
(1-(-q)^j t^{m})^{-j\chi(X)}
(1-t^{m})^{-\chi(S)}. 
\end{align}
\end{thm}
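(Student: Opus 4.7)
The plan is to apply the product expansion formula (\ref{PTXS}) specialized to fiber classes $\beta=m[F]$, substituting the explicit values of $N_{n,m}$ and $L_{n,m}$ already furnished by Propositions \ref{prop:Nmult} and \ref{prop:L}. Since those propositions contain all the geometric content, the remaining task is purely algebraic: verifying a formal power series identity in $q$ and $t$.

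For the $L$-factor, Proposition \ref{prop:L} collapses the sum $\sum_{n,m}L_{n,m}q^n t^m$ to $\sum_{m\ge 0}\chi(\Hilb_m(S))\, t^m$, at which point I would invoke G\"ottsche's classical formula to rewrite this as $\prod_{m\ge 1}(1-t^m)^{-\chi(S)}$, producing the second factor of (\ref{GVform2}).

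For the infinite product factor, I would take logarithms of both the expression obtained by substituting $N_{n,m}=-\chi(X)\sum_{k\mid(n,m)}k^{-2}$ from Proposition \ref{prop:Nmult} into (\ref{PTXS}), and of the target $\prod_{j,m\ge 1}(1-(-q)^j t^m)^{-j\chi(X)}$. Expanding the latter via $-\log(1-x)=\sum_{a\ge 1}x^a/a$ yields
\begin{align*}
\chi(X)\sum_{j,m,a\ge 1}\frac{j}{a}(-q)^{ja}t^{ma}.
\end{align*}
A direct re-indexing $(n,m'):=(ja,ma)$, with $a$ playing the role of a common divisor of $n$ and $m'$, matches this against $-\chi(X)\sum_{n,m'>0}(-1)^{n-1}n\, q^n t^{m'}\sum_{k\mid(n,m')}k^{-2}$ after a single sign reconciliation between $(-q)^{ja}=(-1)^{ja}q^{ja}$ and $(-1)^{n-1}$.

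Essentially no obstacle remains: the nontrivial geometry is already in the two propositions and in G\"ottsche's formula, and the matching of the two generating function expansions is a routine combinatorial identity on divisor sums. The only point requiring care is the bookkeeping of signs and indices in the re-indexing step, together with ensuring convergence in the topological vector space of formal series discussed after (\ref{series}).
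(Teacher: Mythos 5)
Your proposal is correct and follows essentially the same route as the paper: both rest entirely on the product formula (\ref{PTXS}) together with Propositions \ref{prop:Nmult} and \ref{prop:L}, with G\"ottsche's formula handling the $L$-factor. The only cosmetic difference is that you carry out the logarithm/re-indexing identity for the $N$-factor by hand, whereas the paper cites Theorem \ref{thm:PTGV} (whose proof contains exactly that divisor-sum computation) to conclude that $\PT(X/S)$ takes the Gopakumar--Vafa form with $n_0^m=N_{1,m}=-\chi(X)$.
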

\begin{proof}
By Proposition~\ref{prop:Nmult}
and Theorem~\ref{thm:PTGV},
the series $\PT(X/S)$ is 
written as a Gopakumar-Vafa form (\ref{GVform})
with $n_{0}^{m}$ equasl to $-\chi(X)$ 
for all $m \ge 1$. Also Proposition~\ref{prop:L}
implies that 
\begin{align*}
\sum_{n, m}L_{n, m}q^n t^{m} &=
\sum_{m}L_{0, m}t^{m} \\
&= \sum_{m}\chi(\Hilb_m(S))t^m \\
&=\prod_{m\ge 1}(1-t^m)^{-\chi(S)}. 
\end{align*}
Here the last equality is G$\ddot{\rm{o}}$ttsche's formula~\cite{Got}. 
Therefore we have the desired formula.  
\end{proof}

Institute for the Physics and 
Mathematics of the Universe, University of Tokyo

\textit{E-mail address}: yukinobu.toda@ipmu.jp

\end{document}